\newtheorem {theorem}    {Theorem}[section]
\newtheorem {lemma}      [theorem]    {Lemma}
\newtheorem {corollary}  [theorem]    {Corollary}
\newtheorem {proposition}[theorem]    {Proposition}
\theoremstyle{definition}
\newtheorem {definition} [theorem]    {Definition}
\newcounter{AbcT}
\numberwithin{equation}{section}
\newcommand{\A}{\mathcal A}
\newcommand {\N} {{\mathbb N}}
\newcommand {\R} {{\mathbb R}}
\newcommand{\G}{{\mathbb G}}
\newcommand {\Z} {{\mathbb Z}}
\newcommand{\IGNORE}[1]{}
\renewcommand{\liminf}{\varliminf}
\renewcommand{\limsup}{\varlimsup}
\DeclareMathOperator{\Hom}{Hom} \DeclareMathOperator{\SL}{SL}
\DeclareMathOperator{\GL}{GL}
\DeclareMathOperator{\Mat}{M}
\newcommand{\bsl}{\backslash}
\newcommand{\Gm}{\mathbf{G}_m}
\newcommand\rank{\operatorname{rank}}
\newcommand{\fs}{\mathbb{F}}
\newcommand{\diag}{\operatorname{diag}}
\newcommand{\bbM}{\mathbb{M}}
\newcommand{\bbR}{\mathbb{R}}
\newcommand{\bbH}{\mathbb{H}}
\newcommand{\bbB}{\mathbb{B}}
\newcommand{\bbT}{\mathbb{T}}
\newcommand{\bbA}{\mathbb{A}}
\newcommand{\bbG}{\mathbb{G}}
\newcommand{\bq}{\mathbf{q}}
\newcommand{\bp}{\mathbf{p}}
\newcommand{\cH}{\mathcal{H}}
\newcommand{\bv}{\mathbf{v}}
\newcommand{\scH}{\mathfrak{H}}
\newcommand{\scF}{\mathfrak{F}}
\newcommand{\fmi}{f^{-1}_{m}}
\newcommand{\fni}{f^{-1}_{n}}
\newcommand{\DL}{\operatorname{UDL}}
\newcommand{\ED}{\operatorname{ED}}
\newcommand{\Aut}{\operatorname{Aut}}
\newcommand{\ud}{\,\mathrm{d}}
\newcommand{\scHF}{\mathfrak{H}^{\mathfrak{F}}} 
\newcommand{\Supp}{\operatorname{Supp}}
\newcommand{\aut}{\operatorname{aut}}
\newcommand{\volume}{\operatorname{volume}}
\newcommand{\dist}{\operatorname{dist}}
\newcommand{\bfk}{\mathbf{k}}
\newcommand{\bt}{\mathbf{t}}
\newcommand{\mn}{m \times n}
\newcommand{\Imn}{I_{m \times n}}
\begin{document}
\title[Ultrametric Logarithm Laws, II] {Ultrametric Logarithm Laws, II}

\begin{abstract}
We prove positive characteristic versions of the logarithm laws of Sullivan and Kleinbock-Margulis and obtain related results in Metric Diophantine Approximation.  
 
\end{abstract}
\subjclass[2010]{11J83, 11K60, 37D40, 37A17, 22E40} \keywords{
Logarithm laws, Diophantine approximation, positive
characteristic}

\author{J. S. Athreya}
\thanks{Athreya was partially supported by an NSF postdoctoral fellowship, DMS 0603636. }
\address{Department of Mathematics, University of Illinois.
1409 W. Green Street, Urbana, IL 61801.}
\author{Anish Ghosh}
\thanks {Ghosh was partially supported by an EPSRC grant.}
\address{School of Mathematics, University of East Anglia, Norwich, NR4 7TJ UK}
\author{Amritanshu Prasad}
\address{The Institute of Mathematical Sciences Taramani, Chennai 600 113, India}

\maketitle
\tableofcontents

\section{Introduction}

In the landmark work \cite{Sullivan}, Sullivan established the following important, and by now folklore theorem.  Let $V = \mathbb{H}^{d+1}/\Gamma$ be a hyperbolic manifold where $\Gamma$ is a discrete subgroup of hyperbolic isometries which is not co-compact, and let $\dist v(t)$ denote the distance from a fixed point in $V$ of the point achieved after traveling a time $t$ along the geodesic with initial direction $v$.

\begin{theorem}\label{K-S-theorem}[Khintchine-Sullivan theorem]
For all $x \in V$, and almost every $v \in T_x V$,
$$ \limsup_{t \to \infty} \frac{\dist v(t)}{\log t}  = 1/d.$$

\end{theorem}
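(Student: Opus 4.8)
The plan is to convert the growth of $\dist_v(t)$, that is, the depth to which the geodesic has penetrated a cusp by time $t$, into a shrinking-target problem for the geodesic flow, and then to apply the two halves of the Borel--Cantelli lemma. Assume $\Gamma$ has finite covolume, so that $V$ has finitely many cusps and the constant $1/d$ is exact. Realise $T^1V=\Gamma\backslash G$ with $G=\SO(d+1,1)^{\circ}$ (or $\PSL(2,\R)$ when $d=1$), let $(g_t)$ be the geodesic flow, fix a basepoint $o\in V$, and set $\Delta(y)=\dist\bigl(o,\pi(y)\bigr)$, so that $\dist_v(t)=\Delta(g_tv)$. Removing a large compact core, write $V$ outside it as a finite union of cusp neighbourhoods, each parametrised by a height $h\ge 0$ chosen so that $\{h\ge r\}$ has volume $\asymp e^{-dr}$ while a point of height $r$ lies at distance $r+O(1)$ from $o$. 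The first ingredient is the tail estimate $\mu\{\Delta>r\}\asymp e^{-dr}$, where $\mu$ is the Haar probability on $\Gamma\backslash G$, together with the sharper statement that the push-forward $(g_t)_{*}\nu_x$ of the rotation-invariant probability $\nu_x$ on $T^1_xV$ equidistributes towards $\mu$ with an exponential rate. This I would obtain either from Sullivan's shadow lemma --- the visual measure, seen from a lift $\tilde x$, of the shadow of a horoball into which the geodesic penetrates to depth $r$ is $\asymp e^{-dr}$ --- or, more robustly, from exponential mixing of $(g_t)$, i.e.\ the spectral gap on $L^2(\Gamma\backslash G)$, which also governs the equidistribution of the expanding translates $\{g_tk:k\in K\}$ of the fibre. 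Either way one gets $\nu_x\{v:\dist_v(t)>r\}\asymp e^{-dr}$ for $r$ of size $\asymp\tfrac1d\log t$, uniformly in large $t$, with implied constants depending on $x$.

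For the upper bound, fix $\epsilon>0$ and test along the integers. Since $t\mapsto\dist_v(t)$ is $1$-Lipschitz, it suffices to show $\dist_v(n)\le\tfrac{1+\epsilon}{d}\log n$ for all large $n$, and by the convergence half of the Borel--Cantelli lemma this follows from $\sum_n\nu_x\{v:\dist_v(n)>\tfrac{1+\epsilon}{d}\log n\}\asymp\sum_n n^{-(1+\epsilon)}<\infty$. Letting $\epsilon\downarrow0$ yields $\limsup_{t\to\infty}\dist_v(t)/\log t\le 1/d$ for $\nu_x$-almost every $v$.

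For the lower bound I would use the divergence half of Borel--Cantelli in a quasi-independent form. Fix $\epsilon>0$. The naive events $\{\dist_v(t_n)>\tfrac{1-\epsilon}{d}\log t_n\}$ have summable measure unless $(t_n)$ grows only polynomially, in which case consecutive events are badly correlated; so one works instead with $A_n=\{v:\text{the geodesic penetrates a cusp to depth }>\tfrac{1-\epsilon}{d}\log t_n\text{ during }[t_n,t_{n+1}]\}$, exploiting that such an excursion lasts a time $\asymp\log t_n=o(t_n)$, and one chooses $t_{n+1}/t_n$ large enough that consecutive windows decouple. One then verifies (i) $\sum_n\nu_x(A_n)=\infty$ and (ii) $\nu_x(A_m\cap A_n)\le\nu_x(A_m)\nu_x(A_n)+E_{m,n}$ with $\sum_{m<n}E_{m,n}$ under control, the decorrelation coming from exponential mixing of $(g_t)$ --- equivalently, on the boundary side, from the separation, counting and well-distribution of the parabolic fixed points of $\Gamma$ and of their horoball shadows. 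The Kochen--Stone form of Borel--Cantelli then gives that $A_n$ occurs for infinitely many $n$ for $\nu_x$-almost every $v$, so that $\limsup_{t\to\infty}\dist_v(t)/\log t\ge(1-\epsilon)/d$; letting $\epsilon\downarrow0$ finishes the proof.

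The step I expect to be the main obstacle is the lower bound, specifically the quasi-independence estimate (ii): it requires genuinely quantitative dynamics --- exponential mixing of the geodesic flow, equivalently effective counting and equidistribution of parabolic points --- rather than the soft ergodicity that suffices for the upper bound, and the excursion events together with the lacunary sequence $(t_n)$ must be arranged so that the mixing error actually beats the (logarithmically divergent) sum $\sum_n\nu_x(A_n)$. A secondary point is that the conclusion is wanted on the single $\mu$-null fibre $T^1_xV$ for \emph{every} $x$, not merely $\mu$-almost everywhere on $\Gamma\backslash G$; since the $(g_t)$-saturation of $T^1_xV$ is still $\mu$-null, this is precisely why one needs equidistribution of $(g_t)_{*}\nu_x$ \emph{with a rate}, rather than only the ergodic theorem for $\mu$.
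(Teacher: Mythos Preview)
The paper does not prove Theorem~\ref{K-S-theorem}; it is quoted from Sullivan's work \cite{Sullivan} as motivation, and the paper's own contribution is the positive-characteristic analogue (Theorem~\ref{loglaw1} and Corollary~\ref{loglawp}). So there is no proof in the paper to compare against directly.

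That said, your strategy is essentially the one the paper uses for its analogue, which in turn follows Kleinbock--Margulis \cite{KleinMarg}: reformulate cusp excursions via a distance-like function $\Delta$ on $G/\Gamma$, establish the tail estimate $\mu\{\Delta>r\}\asymp e^{-\kappa r}$ (your $\kappa=d$), and use exponential decay of matrix coefficients to verify the quasi-independence hypothesis in a Sprind\v{z}uk--Schmidt Borel--Cantelli lemma (the paper's Proposition~\ref{BClemma}). Your identification of the lower bound and the quasi-independence estimate as the crux is exactly right, and your observation that one must work fibrewise on $T^1_xV$ rather than $\mu$-a.e.\ is also a genuine issue; Kleinbock--Margulis handle it by a Fubini-type argument and the structure of the unstable foliation, while Sullivan's original argument uses the shadow lemma and Patterson--Sullivan measure directly on the boundary. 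Either route works, and your sketch correctly anticipates the main technical load.

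One small point: in the paper's framework the events are simply $\{\Delta(g_n x)\ge r_n\}$ along integer times, and the quasi-independence comes straight from the matrix-coefficient bound without any lacunary re-indexing; your device of passing to excursion events on windows $[t_n,t_{n+1}]$ is closer to Sullivan's original argument and is not needed if one has effective mixing in hand.
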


\noindent In fact, this is a special case of more general results proved in \cite{Sullivan}. The reason the above is referred to as the \emph{Khintchine-Sullivan} theorem is that it has an intimate connection with Khintchine's theorem in Diophantine approximation.  Let a non-increasing continuous function $\psi$ from $\R_{+} \to \R_{+}$ be given and let $\mathcal{W}_{1 \times 1}(\psi, \R)$ be the subset of real numbers $x$ for which there exist infinitely many $q \in \mathbb{Z}$ such that 
\begin{equation}\label{psi1} 
|p + xq| < \psi(|q|)
\end{equation}

\noindent for some $p \in \mathbb{Z}$.  Khintchine's theorem characterizes the size of $\mathcal{W}_{1 \times 1}(\psi, \R)$ in terms of Lebesgue measure.

\begin{theorem}\label{KG}[Khintchine's theorem]
$\mathcal{W}_{1 \times 1}(\psi, \R)$ has zero or full measure according as 
\begin{equation}\label{grosum}
\sum_{x = 1}^{\infty} \psi(x)
\end{equation}
\noindent converges or diverges. 
\end{theorem}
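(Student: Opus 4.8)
\smallskip
\noindent\textbf{Proof proposal.}
The plan is to run the classical measure-theoretic argument, treating the two halves of the dichotomy separately. First I would reduce to the unit interval: both $\mathcal{W}_{1\times 1}(\psi,\R)$ and the condition \eqref{psi1} (with $p$ ranging over all of $\Z$) are invariant under $x\mapsto x+1$, so it suffices to compute the Lebesgue measure of $\mathcal{W}_{1\times 1}(\psi,\R)\cap[0,1)$. Writing $\norm{t}$ for the distance from $t\in\R$ to the nearest integer and, for $q\in\N$,
\begin{equation*}
A_q \eqdef \{x\in[0,1):\ \norm{qx}<\psi(q)\},
\end{equation*}
one checks that $\mathcal{W}_{1\times 1}(\psi,\R)\cap[0,1)=\limsup_{q\to\infty}A_q$, that $A_q$ is a disjoint union of at most $q+1$ equally spaced subintervals, and that $|A_q|=\min\{2\psi(q),1\}$; discarding the finitely many $q$ with $\psi(q)>1/2$ we may assume $|A_q|=2\psi(q)$. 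If $\sum_{x}\psi(x)<\infty$, then $\sum_q|A_q|<\infty$ and the convergence half of the Borel--Cantelli lemma immediately gives $\bigl|\limsup_q A_q\bigr|=0$, so $\mathcal{W}_{1\times 1}(\psi,\R)$ is null.

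Suppose now $\sum_x\psi(x)=\infty$, so that $\sum_q|A_q|=\infty$. Here I would use the second-moment (Chung--Erd\H{o}s) form of the Borel--Cantelli lemma: it is enough to establish the quasi-independence estimate
\begin{equation*}
\sum_{q,r\le Q}|A_q\cap A_r|\ \ll\ \Bigl(\sum_{q\le Q}|A_q|\Bigr)^2
\end{equation*}
along a sequence $Q\to\infty$, which then forces $\bigl|\limsup_q A_q\bigr|>0$. The crux is the correlation bound for $q\neq r$: the set $A_q\cap A_r$ consists of those $x$ simultaneously within $\psi(q)/q$ of a multiple of $1/q$ and within $\psi(r)/r$ of a multiple of $1/r$, and by grouping the admissible pairs of multiples according to $d=\gcd(q,r)$ (equivalently, counting the solutions of a congruence) a Farey-fraction / geometry-of-numbers count gives a bound of the shape $|A_q\cap A_r|\ll \psi(q)\psi(r)+\frac{d}{\max(q,r)}\bigl(\psi(q)+\psi(r)\bigr)$; the error terms, summed over $q,r\le Q$, are $O\bigl((\sum_{q\le Q}|A_q|)^2\bigr)$ precisely because $\psi$ is \emph{non-increasing}. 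Finally, to upgrade positive measure to full measure I would invoke a zero--one law (Cassels--Gallagher): the quasi-independence estimate localizes to an arbitrary subinterval $I\subseteq[0,1)$ with an implied constant independent of $I$, yielding $\bigl|I\cap\limsup_q A_q\bigr|\ge c\,|I|$ for a fixed $c>0$, whence the Lebesgue density theorem gives $\bigl|\limsup_q A_q\bigr|=1$ and $\mathcal{W}_{1\times 1}(\psi,\R)$ has full measure.

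I expect the main obstacle to be the correlation estimate, and within it the bookkeeping of the $\gcd$-dependent error terms together with the verification that their total is genuinely of lower order; this is exactly the step that breaks down without monotonicity (the Duffin--Schaeffer phenomenon), so the hypothesis on $\psi$ must be used here in an essential way. An alternative route for the divergence direction, closer in spirit to the dynamical viewpoint of this paper, would be to exploit ergodicity of the Gauss map together with the continued-fraction / three-distance machinery to show directly that Lebesgue-almost every $x$ admits infinitely many $q$ with $\norm{qx}<\psi(q)$.
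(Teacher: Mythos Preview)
Your argument is the standard classical proof and is correct; however, the paper does not give its own proof of this particular statement. Theorem~\ref{KG} in the introduction is quoted as Khintchine's classical result over $\R$, purely as motivation, with attribution to \cite{Khintchine}. The paper's contribution is the positive-characteristic analogue (Theorem~5.1 and its generalization Theorem~\ref{KG-gen}), and there the method is entirely different from yours: rather than a direct quasi-independence estimate on the arithmetic sets $A_q$, the paper passes through the Dani--Kleinbock--Margulis correspondence (Lemmas~\ref{coc} and~\ref{chofcoord}) to recast $(\psi,n)$-approximability of a lattice $\Lambda$ as the event $\Delta(g_t\Lambda)\ge r(t)$ for infinitely many $t$, and then invokes the abstract Borel--Cantelli machinery (Theorem~\ref{km1.8}) driven by effective decay of matrix coefficients, together with the fact that $\Delta$ is $\DL$ (Theorem~\ref{theorem-volumes}).

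Your closing remark already anticipates this: the ``alternative route closer in spirit to the dynamical viewpoint of this paper'' is exactly what the paper does, though via diagonal flows on $\SL_{m+n}(k)/\SL_{m+n}(Z)$ and spectral-gap input rather than the Gauss map. The trade-off is clear: your elementary approach is self-contained and works in the $1\times1$ real case without any representation theory, but the correlation bookkeeping becomes heavy in the matrix setting and does not obviously survive the passage to multiplicative approximation; the dynamical approach requires the mixing apparatus of \S\ref{section-decay}--\ref{section-main} as a black box but then handles the $m\times n$, multiplicative, and positive-characteristic cases uniformly, and even yields the asymptotic count of Theorem~\ref{km4.3}.
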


\noindent Sullivan's result were generalized by Kleinbock and Margulis in their important paper \cite{KleinMarg}. They generalized Theorem \ref{K-S-theorem} to locally symmetric spaces of non-compact type and Theorem \ref{KG} to a multidimensional setting as well as to \emph{multiplicative} settings. These apparently disparate results are both manifestations of the \emph{shrinking target problem} (\cite{Kleinbock-Chernov}, \cite{Athreya-survey}) for actions of diagonal one-parameter subgroups on homogeneous spaces of Lie groups. Briefly, both the geodesic flow on the unit tangent bundle of a non-compact locally symmetric space as well as approximation of real numbers by rational numbers can be modelled by cusp excursions of such one-parameter subgroups on the non-compact space $G/\Gamma$ where $G$ is a semisimple Lie group and $\Gamma$ is a lattice in $G$.\\

 It is natural to investigate analogues of these results in more general settings. In this paper, we are concerned with analogues of Theorems \ref{K-S-theorem} and \ref{KG} over local fields of positive characteristic. Both these problems have been studied extensively in recent times (more on this in \S's \ref{section-diophantine} and \ref{section-geodesic} respectively).

\subsection{Notation and set-up}  We will use Vinogradov notation throughout the paper. Thus $a \ll b$ means $a \leq C b$ for a positive constant $C$ while $a \asymp b$ means $a \ll b$ and $b \ll a$. Let $\fs$ denote the finite field with $s$ elements, $s$ a power of a prime $p$. Let $\bfk
= \fs(X)$ be the ring of rational functions, $Z = \fs[X]$ the
ring of polynomials, $O = \fs[[X^{-1}]]$ be the ring of
formal series in $X^{-1}$ and $k = \fs((X^{-1}))$ denote the field
of Laurent series with entries from $\fs$. There is a natural non-Archimedean valuation on $k$:
$$ v(a) = \sup\{j \in \mathbb{Z}, a_i = 0 ~\forall~ i < j \}. $$
The corresponding discrete valuation ring is $O$ and $\bfk$
is its quotient field. This leads to an absolute
value $|a| = s^{-v(a)} $ which in turn induces a metric $d(a,b) = |a
- b|$ and $(k, d)$ is a separable, complete, ultrametric space. Any local field of positive
characteristic is isomorphic to a field $k$ constructed in this matter. Let $\Mat_{m \times n}(k)$ denote the $m \times n$ matrices with entries from $k$. We equip $\Mat_{m \times n}(k)$ with the $L^{\infty}$ norm which we denote $\|~\|$.  Let $\G$ be a connected, semisimple linear algebraic group, defined, isotropic and split over $\fs$ and set $G = \G(k)$. We will fix once and for all a representation $\rho : G \to \GL_{n}(k)$ and for $g \in G$ set $\|g\| :=  \|\rho(g)\|$.   Let $\Gamma$ be a lattice in $G$ and let $(\Upsilon, \mu)$ be the probability space $G/\Gamma$ equipped with Haar measure which descends from $G$.\\

\noindent We now introduce some notation from \cite{KleinMarg}. Let $\mathcal{B}$ be a family of measurable subsets of $\Upsilon$ and let $\scF = \{f_n\}$ denote a sequence of $\mu$-preserving transformations of $\Upsilon$
\begin{definition}(Borel-Cantelli families)\footnote{The definition of Borel-Cantelli families  makes sense in the setting of any probability space, and indeed this is the definition in \cite{KleinMarg}. See also \cite{Kleinbock-Chernov} for even more general formulations and applications.}
We say that $\mathcal{B}$ is \emph{Borel-Cantelli for $\scF$} if for every sequence $\{A_n~:~n \in \mathbb{N}\}$ of sets from $\mathcal{B}$, 
\begin{displaymath}
\mu(\{x \in \Upsilon~|~f_{n}(x) \in A_n~\text{for infinitely many}~n \in \N\}) 
\end{displaymath}
\begin{equation*} 
 =
\begin{cases} 0 & \text{if } \sum_{n = 1}^{\infty}\mu(A_n) < \infty,\\\\
 1 & \text{if } \sum_{n = 1}^{\infty}\mu(A_n) = \infty.
\end{cases} \end{equation*}

\end{definition}

\noindent For a function $\Delta$ on $\Upsilon$ and an integer $n \in \N$, denote by $\Phi_{\Delta}$, the tail distribution function, defined by:
\begin{equation}\label{Phi-def}
\Phi_{\Delta}(n) := \mu(\{x \in \Upsilon~|~\Delta(x) \geq s^{n}\}).
\end{equation}
   
\begin{definition}(Smooth and $\DL$)
Let $\Delta$ be a function on $\Upsilon$. We call $\Delta$ \emph{smooth} if there exists a compact open subgroup $U$ of $G$ such that $\Delta$ is $U$-invariant. For $\kappa > 0$, we say that $\Delta$ is $\kappa-\DL$ (an abbreviation for $\kappa$-\emph{ultra distance like}) if it is smooth and
\begin{equation}\label{kDL}
 \Phi_{\Delta}(n)\asymp s^{-\kappa n}~~\forall ~n \in \mathbb{Z}.
\end{equation}
Finally, we say that $\Delta$ is $\DL$ if it is smooth and there exists $\kappa > 0$ such that (\ref{kDL}) holds\footnote{In fact, we only need $ \Phi_{\Delta}(n)\ll s^{-\kappa n}~~\forall ~n \in \mathbb{Z}$. See \S \ref{section-decay}}.  
\end{definition}

\noindent We will be primarily concerned with the following two functions:
\subsection{$\Delta$ on $\SL_{r}(k)/\SL_{r}(Z)$}
For a positive integer $r$, we consider the space of unimodular lattices (co-volume $1$ $O$-submodules of maximal rank) in $k^r$, which can be identified with $\SL_{r}(k)/\SL_{r}(Z)$. On this space, we define the function:
\begin{equation}\label{delta-def}
\Delta(\Lambda) := \max_{\bv \in \Lambda \backslash \{0\}} \log_{s} \frac{1}{\|\bv\|}.
\end{equation}
\noindent The function $\Delta$ plays a crucial role in the Khintchine-Groshev theorem and its analogues.

\subsection{$d(x_0, \cdot)$ on $G/\Gamma$}
Consider the space $G/\Gamma$ where $\Gamma$ is an \emph{arithmetic} lattice in $G$. Let $X$ denote the Bruhat-Tits building of $G$ and $d$ denote the combinatorial metric on $X$. Then $d$ lifts to a right invariant pseudo-metric on $G$ and therefore also defines a pseudo-metric (continued to be called $d$) on $\Upsilon = G/\Gamma$. Fix a point $x_0$ on $\Upsilon$ and consider the function:
\begin{equation}\label{d-def}
x \to d(x_0, x).
\end{equation}
\noindent The function $d(x_0, \cdot)$ will play a crucial role in the derivation of logarithm laws.

\noindent It turns out  (see \S \ref{section-volumes}) that

\begin{theorem}\label{theorem-volumes}
The functions $\Delta$ and $d(x_0, \cdot)$ are $\DL$.
\end{theorem}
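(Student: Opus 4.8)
## Proof Strategy for Theorem \ref{theorem-volumes}

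The plan is to establish the $\DL$ property separately for each of the two functions, since the geometry underlying $\Delta$ on $\SL_r(k)/\SL_r(Z)$ and $d(x_0,\cdot)$ on $G/\Gamma$ is rather different, even though both estimates ultimately reduce to counting lattice points in balls. For $\Delta$, I would first observe that smoothness is immediate: $\Delta$ is invariant under the compact open subgroup $\SL_r(O)$, since multiplying a lattice by an element of $\SL_r(O)$ does not change the set of norms $\{\|\bv\| : \bv \in \Lambda\setminus\{0\}\}$ (because $O$-module automorphisms preserve the $L^\infty$ norm on $k^r$). The core is then the estimate $\Phi_\Delta(n) \asymp s^{-\kappa n}$. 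Note that $\Delta(\Lambda) \geq s^n$ means the unimodular lattice $\Lambda$ contains a nonzero vector of norm at most $s^{-n}$, i.e. $\Lambda$ is ``cuspidal at depth $n$.'' The upper bound $\Phi_\Delta(n) \ll s^{-(r-1)n}$ should follow from a Siegel–Veech / Minkowski-type argument: the expected number of nonzero vectors of $\Lambda$ in the ball of radius $s^{-n}$ (integrated over the space of unimodular lattices against Haar measure) is comparable to the volume of that ball in $k^r$, which is $\asymp s^{-rn}$; combined with the observation that a unimodular lattice with a vector that short must in fact have $\asymp s^{n}$ short vectors (the whole rank-one sublattice it generates, scaled), one gets the improvement to exponent $r-1$. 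The matching lower bound is produced by exhibiting an explicit family of lattices: take $\diag(X^{-n}, X^{n}, 1,\dots,1)\cdot \SL_r(Z)$ and a neighborhood thereof, whose Haar measure is $\asymp s^{-(r-1)n}$ and all of whose members lie in $\{\Delta \geq s^n\}$. Thus $\kappa = r-1$ works for $\Delta$.

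For $d(x_0,\cdot)$ on $\Upsilon = G/\Gamma$, smoothness again comes from the fact that the combinatorial metric $d$ on the Bruhat–Tits building $X$ is invariant under a maximal compact (open) subgroup $K$, so $d(x_0,\cdot)$ is $K$-invariant on $\Upsilon$ for a suitable choice; one must check this descends correctly through $\Gamma$, using that $x_0$ can be taken to be the image of the base vertex. For the decay estimate, the key input is a reduction-theoretic description of the cusp neighborhoods of $G/\Gamma$: the set $\{x \in \Upsilon : d(x_0,x) \geq n\}$ is, up to bounded error, a union over the finitely many cusps of Siegel sets truncated at ``height'' $n$, and the Haar measure of such a truncated Siegel set decays exponentially in $n$ with rate governed by the sum of positive roots (equivalently, the modular function of the relevant parabolic evaluated at the corresponding one-parameter subgroup). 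This is the function-field analogue of the classical volume estimate for cusp neighborhoods in locally symmetric spaces (as in Kleinbock–Margulis), and in the $\SL_r$ case it can be cross-checked against the $\Delta$ computation via the correspondence between excursions into the cusp and short vectors. One then reads off $\kappa$ as the minimal such root-sum over the cusps.

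The main obstacle I anticipate is the lower bound in the decay estimate for $d(x_0,\cdot)$, together with getting the measure estimates for the truncated Siegel sets sharp enough to yield a true $\asymp$ rather than merely $\ll$. In the number-field setting this requires care about the structure of $\Gamma$ near each cusp (the ``cross-section'' lattice in the unipotent radical), and in positive characteristic the arithmetic of $\fs[X]$-points of the unipotent radical must be controlled; degenerate or ramified behavior at the single place at infinity is where subtleties can enter. A secondary technical point is verifying that the combinatorial metric on the building descends to a genuine function on $\Upsilon$ that is both smooth in our sense and coarsely equivalent to the ``correct'' height function on Siegel sets — i.e. that the metric $d$ and the reduction-theoretic height agree up to additive and multiplicative constants, so that the $\asymp$ in \eqref{kDL} is not spoiled. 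Fortunately, as the footnote to the definition of $\DL$ indicates, for the applications only the upper bound $\Phi_\Delta(n) \ll s^{-\kappa n}$ is strictly needed, so even if the lower bound requires extra hypotheses on $\Gamma$ the essential conclusion survives; I would state and prove the two-sided estimate where it is clean (namely for $\Delta$, and for $d(x_0,\cdot)$ when $\Gamma$ is arithmetic with well-understood cusps) and fall back to the one-sided bound otherwise.
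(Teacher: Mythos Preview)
Your overall plan --- handle $\Delta$ via a Siegel mean-value argument and $d(x_0,\cdot)$ via reduction theory --- matches the paper's, but there is one genuine error and one significant methodological divergence.

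For $\Delta$, your ``improvement to exponent $r-1$'' does not work. The observation you invoke --- that a unimodular lattice containing a nonzero vector $\bv$ of norm $\leq s^{-n}$ must contain $\asymp s^n$ such vectors from the rank-one sublattice $Z\bv$ --- is false: the multiples $a\bv$ with $\|a\bv\|\leq s^{-n}$ require $|a|\leq 1$, i.e.\ $a\in\fs$, yielding only $s-1$ short vectors, a bounded number. There is no improvement over what Siegel's formula gives directly, and the correct exponent is $\kappa = r$. The paper proceeds exactly as in \cite{KleinMarg}~\S7: the first-moment Siegel formula together with its multidimensional (second-moment) generalization, transported to positive characteristic via \cite{Morishita}, yields both sides of $\Phi_\Delta(n)\asymp s^{-rn}$. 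Your explicit-family lower bound is reasonable in spirit but, as stated, produces the wrong exponent.

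For $d(x_0,\cdot)$, your route via Siegel sets and truncated cusp neighborhoods is the direct transcription of the real-group argument and could in principle be carried out using Harder-type reduction theory over function fields, but the paper takes a rather different, more combinatorial path adapted to the building. It parametrizes $\bbG(O)\backslash G/\bbG(Z)$ by dominant cocharacters $\lambda\in X_*(T)^{++}$, invokes an \emph{exact} volume formula $\mu(I\phi(\tilde w)\Gamma)=q^{-l(\tilde w)}$ for Iwahori double cosets in terms of the affine Weyl group length (from \cite{MR2003e:11057}), uses $l(e^\lambda)=\langle\rho,\lambda\rangle$ for dominant $\lambda$, and then simply sums over $\{\lambda:\langle\rho,\lambda\rangle\geq T\}$ to obtain $\mu(\Upsilon_T)\asymp\sum_{l\geq T}s^{-l}l^{r-1}$. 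The two-sided estimate thus comes essentially for free from the exact double-coset volumes, so the lower-bound difficulty you flag as the main obstacle is sidestepped entirely. Conversely, the paper's argument relies on building-theoretic inputs (the Birkhoff-type decomposition of \cite{thesis,MR1983748} and the Iwahori volume formula) that your Siegel-set sketch does not touch; making your approach precise would require developing the function-field Siegel-set machinery in comparable detail.
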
  

\noindent We refer the reader to \cite{KleinMarg} for the concept of ``distance like" functions. The above definition is a natural ultrametric replacement for distance like.
\subsection{Main Results}

Our first result is

\begin{theorem}\label{km1.8}
Let  $\scF = \{f_n~|~n \in \N\}$ be a sequence of elements of $G$ satisfying
\begin{equation}\label{ED}
\sup_{m \in \N} \sum_{n = 1}^{\infty} \|f_n \fmi\|^{-\beta} < \infty~\forall~\beta > 0,
\end{equation}

\noindent and let $\Delta$ be a $\DL$ function on $\Upsilon$. Then
\begin{equation}\mathcal{B}(\Delta) := \left\{ \{ x \in \Upsilon~|~\Delta(x) \geq s^n\}~|~n \in \mathbb{Z} \right\}\end{equation}
\noindent is Borel-Cantelli for $\scF$.

\end{theorem}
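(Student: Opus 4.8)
The plan is to follow the dynamical Borel--Cantelli strategy of Kleinbock--Margulis, adapting it to the ultrametric setting.  Fix a sequence $\{A_n\}$ with $A_n = \{x : \Delta(x) \geq s^{k_n}\}$ for integers $k_n$.  The convergence case is the easy half: if $\sum_n \mu(A_n) < \infty$, then since each $f_n$ is measure preserving, $\sum_n \mu(f_n^{-1}A_n) = \sum_n \mu(A_n) < \infty$, and the ordinary Borel--Cantelli lemma gives that $\mu$-a.e.\ $x$ lies in $f_n^{-1}A_n$ for only finitely many $n$, i.e.\ $f_n(x)\in A_n$ finitely often.  So the whole content is the divergence case, where one must produce quantitative \emph{quasi-independence} of the events $E_n := f_n^{-1}A_n$.

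The heart of the argument is a correlation estimate of the form
\begin{equation}\label{quasi-ind}
\mu(E_m \cap E_n) \ll \mu(E_m)\,\mu(E_n) + (\text{error term summable off the diagonal}),
\end{equation}
after which the Borel--Cantelli lemma for quasi-independent events (the second Borel--Cantelli / Sprind\v{z}uk--type lemma, as used in \cite{KleinMarg}) upgrades $\sum \mu(E_n) = \infty$ to $\mu(\limsup E_n) = 1$.  To get \eqref{quasi-ind}, write $\mu(E_m \cap E_n) = \int_\Upsilon (\chi_{A_m}\circ f_m)(\chi_{A_n}\circ f_n)\,d\mu$.  Since $\Delta$ is smooth, $\chi_{A_m}$ and $\chi_{A_n}$ are $U$-invariant for a fixed compact open subgroup $U \leq G$; hence $\chi_{A_n}\circ f_n$ is invariant under $f_n^{-1}Uf_n$, and one reduces to bounding the matrix coefficient $\langle \pi(f_n f_m^{-1})\varphi_m, \varphi_n\rangle$ of the regular representation of $G$ on $L^2_0(\Upsilon)$, where $\varphi_j$ is essentially $\chi_{A_j}$ minus its average.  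The key input is a \emph{quantitative decay of matrix coefficients} (effective mixing / property $(\tau)$ for the arithmetic lattice $\Gamma$): there is $\gamma>0$ with $|\langle \pi(g)\varphi,\psi\rangle| \ll \|g\|^{-\gamma}\,\mathcal S(\varphi)\mathcal S(\psi)$ for $U$-invariant $\varphi,\psi \in L^2_0$, with $\mathcal S$ a suitable Sobolev-type norm controlled by $L^2$ and $L^1$ norms in the ultrametric smooth category.  Combining this with the $\DL$ estimate $\|\chi_{A_n}\|_1 = \Phi_\Delta(k_n) \asymp s^{-\kappa k_n}$ and $\|\chi_{A_n}\|_2^2 \asymp s^{-\kappa k_n}$ yields
\[
\mu(E_m \cap E_n) - \mu(E_m)\mu(E_n) \;\ll\; \|f_n f_m^{-1}\|^{-\gamma}\,\bigl(s^{-\kappa k_m}\bigr)^{1/2}\bigl(s^{-\kappa k_n}\bigr)^{1/2},
\]
and hypothesis \eqref{ED} (with $\beta=\gamma$) makes $\sum_{m}\sup_{m}\sum_{n}\|f_nf_m^{-1}\|^{-\gamma}$-type sums finite, so the error, weighted by $\mu(E_m)^{1/2}\mu(E_n)^{1/2} \le \tfrac12(\mu(E_m)+\mu(E_n))$, is controlled relative to $\bigl(\sum_{n\le N}\mu(E_n)\bigr)^2$ as $N\to\infty$.

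I expect two places to be the main obstacles.  First, establishing the quantitative decay of matrix coefficients for $G = \G(k)$ a split semisimple group over a positive-characteristic local field, acting on $L^2_0(G/\Gamma)$ for an arithmetic $\Gamma$ — this is where one needs an effective spectral gap (property $(\tau)$), and one must check the available literature covers this function-field case uniformly, including the reduction of the Sobolev norm to volume-type quantities in the totally disconnected setting (here the $U$-invariance makes the ``smoothing'' trivial, which is actually a simplification over the real case).  Second, the bookkeeping in the quasi-independence lemma: one must verify that \eqref{ED} is exactly the right hypothesis to absorb the off-diagonal error, i.e.\ that $\|f_nf_m^{-1}\|^{-\gamma} \le \|f_n f_m^{-1}\|^{-\gamma}$ can be summed after pairing with the square-root measure factors; this is a direct adaptation of the argument in \cite{KleinMarg} and should go through verbatim once the decay estimate is in hand.
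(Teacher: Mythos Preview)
Your proposal is correct and follows essentially the same route as the paper: effective decay of matrix coefficients on $L^2_0(G/\Gamma)$ (the paper's Theorem~\ref{main-mixing}) yields the correlation bound $\langle \rho_0(f_nf_m^{-1})\varphi_m,\varphi_n\rangle \ll \|f_nf_m^{-1}\|^{-1/b}\|\varphi_m\|_2\|\varphi_n\|_2$, the $\DL$ hypothesis gives $\|\chi_{A_n}\|_2^2 \asymp s^{-\kappa k_n}$, and then AM--GM plus the $\ED$ condition feed into Sprind\v{z}uk's quasi-independence Borel--Cantelli lemma (the paper's Proposition~\ref{BClemma}). Your two anticipated obstacles are exactly where the paper does the work: \S\ref{section-decay} assembles the spectral gap in positive characteristic from arithmeticity, Burger--Sarnak restriction, and Drinfeld's Ramanujan for $\GL_2$, and the bookkeeping is the short computation in \S\ref{section-main}; the only point you do not mention is the reduction from general $G$ to the center-free case, which the paper handles at the end of \S\ref{section-main} via \cite{KleinMarg}~\S4.3.
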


\noindent With appropriate choices of $\Delta, G$ and $\Gamma$, Theorem \ref{km1.8} implies positive characteristic versions of logarithm laws as well as several results related to Khintchine's theorem.  Sequences which satisfy (\ref{ED}) above are referred to as ``exponentially divergent" (abbreviated $\ED$) in \cite{KleinMarg}. We now turn our attention to asymptotics.

\begin{theorem}\label{km4.3}
Let $G, \Gamma, \scF$ and $\Delta$ be as before. Let $\{r_n\}$ be a sequence of integers such that
\begin{equation}\label{km4.31}
\sum_{n = 1}^{\infty} \Phi_{\Delta}(r_n) = \infty.
\end{equation}
\noindent There exists a positive constant $c \leq 1$ such that for almost every $x \in G/\Gamma$
\begin{align}\label{km4.32}
c &\leq \liminf_{N \to \infty} \frac{\#\{1 \leq n \leq N~:~ \Delta(f_n x) \geq s^{r_n}\}}{\sum_{n = 1}^{N}\Phi_{\Delta}(r_n)}\\
  & \leq \limsup_{N \to \infty} \frac{\#\{1 \leq n \leq N~:~ \Delta(f_n x) \geq s^{r_n}\}}{\sum_{n = 1}^{N}\Phi_{\Delta}(r_n)} \leq  \frac{1}{c}.
\end{align}
\noindent Moreover, if $G$ is assumed to be center free then $c$ can be chosen to be $1$, i.e. for almost every $x \in G/\Gamma$,
\begin{equation}\label{km4.33}
\lim_{N \to \infty} \frac{\#\{1 \leq n \leq N~:~ \Delta(f_n x) \geq s^{r_n}\}}{\sum_{n = 1}^{N}\Phi_{\Delta}(r_n)} = 1.
\end{equation}
\end{theorem}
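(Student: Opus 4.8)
The plan is to run the classical second--moment (Borel--Cantelli) method of Sprind\v{z}uk and W.~M.~Schmidt, with the quasi--independence input supplied by the effective decay of matrix coefficients on $L^2_0(\Upsilon)$. For $n\in\N$ put $A_n:=\{x\in\Upsilon:\Delta(x)\ge s^{r_n}\}$, so $\mu(A_n)=\Phi_\Delta(r_n)$, and let $\chi_n$ be the indicator of $\{x\in\Upsilon:\Delta(f_nx)\ge s^{r_n}\}$; since each $f_n$ is $\mu$--preserving, $\int_\Upsilon\chi_n\,d\mu=\Phi_\Delta(r_n)$. Writing $S_N:=\sum_{n=1}^N\chi_n$ and $E_N:=\sum_{n=1}^N\Phi_\Delta(r_n)$, hypothesis \equ{km4.31} says $E_N\to\infty$, and \equ{km4.32}--\equ{km4.33} are assertions about $S_N/E_N$ for $\mu$--a.e.\ $x$. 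Once we have, for a fixed constant $C$ and all $M<N$, the variance bound
\[
\int_\Upsilon\Bigl(\sum_{n=M+1}^N\bigl(\chi_n-\Phi_\Delta(r_n)\bigr)\Bigr)^2 d\mu \ \le\ C\sum_{n=M+1}^N\Phi_\Delta(r_n),
\]
the conclusion $S_N/E_N\to1$ a.e.\ follows by the standard argument: choosing a subsequence $N_k$ along which $E_{N_k}$ grows geometrically (possible since $0\le\Phi_\Delta(r_n)\le1$), Chebyshev's inequality and the first Borel--Cantelli lemma give $S_{N_k}/E_{N_k}\to1$ a.e., and one interpolates to general $N$ using the monotonicity of $N\mapsto S_N$ and $N\mapsto E_N$ and then lets the geometric ratio tend to $1$. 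The same estimate controls both the over-- and the under--shoot, so it yields \equ{km4.32} with $c=1$ and hence \equ{km4.33}.

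The heart of the matter is the variance bound. As $\Delta$ is $\DL$, hence smooth, every $A_n$ is invariant under one fixed compact open subgroup $U\le G$, so each $\mathbf{1}_{A_n}$ is a $U$--invariant element of $L^2(\Upsilon)$ whose relevant ``Sobolev'' norm is comparable, with a constant depending only on $U$, to its $L^2$--norm. Decompose $\mathbf{1}_{A_n}=\Phi_\Delta(r_n)\cdot 1+\psi_n$ with $\psi_n\in L^2_0(\Upsilon)$ and $\|\psi_n\|_2^2=\Phi_\Delta(r_n)-\Phi_\Delta(r_n)^2\le\Phi_\Delta(r_n)$. A change of variables using the $G$--invariance of $\mu$ gives, for $m\ne n$, $\mathrm{Cov}(\chi_m,\chi_n)=\langle\psi_m,\pi(f_nf_m^{-1})\psi_n\rangle$, where $\pi$ is the regular representation of $G$ on $L^2(\Upsilon)$. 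Feeding in the effective decay of matrix coefficients on $L^2_0(\Upsilon)$ (the positive--characteristic spectral gap; see \S\ref{section-decay}) together with the Sobolev comparison above produces $\gamma>0$ with $|\mathrm{Cov}(\chi_m,\chi_n)|\ll\|f_nf_m^{-1}\|^{-\gamma}\sqrt{\Phi_\Delta(r_m)\Phi_\Delta(r_n)}$; since also $\mathrm{Var}(\chi_n)\le\Phi_\Delta(r_n)$, summing over a block $M<m,n\le N$ using $\sqrt{ab}\le\tfrac12(a+b)$ and invoking the $\ED$ hypothesis \equ{ED} (which bounds $\sum_n\|f_nf_m^{-1}\|^{-\gamma}$ and $\sum_n\|f_mf_n^{-1}\|^{-\gamma}$ uniformly in $m$) yields the displayed variance bound.

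This produces $c=1$ whenever the decay of matrix coefficients is available on all of $L^2_0(\Upsilon)$, in particular whenever $G$ is center free. In the general semisimple case one must split off the finite--dimensional part of $L^2(\Upsilon)$ on which the (finite) center of $G$ acts through non--trivial characters: there $\mathrm{Cov}(\chi_m,\chi_n)$ is only controlled by Cauchy--Schwarz, i.e.\ by $\sqrt{\Phi_\Delta(r_m)\Phi_\Delta(r_n)}$, and absorbing that contribution weakens the variance estimate by a multiplicative constant. Re--running the lacunary--subsequence/interpolation argument with the weaker estimate then gives the two--sided bounds \equ{km4.32} with some $c\le1$ (determined by that constant) in place of the exact limit, which is precisely the generality asserted.

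The principal obstacle is the effective, polynomial--rate decay of matrix coefficients on $L^2_0(\Upsilon)$ in positive characteristic, with an exponent good enough to be summed against an $\ED$ sequence; this is the ultrametric substitute for the Kazhdan/Selberg spectral gap used in \cite{KleinMarg}, and it is here that the hypotheses that $\G$ be semisimple, isotropic and split over $\fs$, and that $\Gamma$ be a lattice, enter. The remainder --- the bookkeeping in the variance estimate and the subsequence/interpolation argument --- is routine once that decay estimate and the $\ED$ condition are in hand.
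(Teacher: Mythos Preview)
Your argument for the center-free case is correct and is exactly the paper's approach: apply the Sprind\v{z}uk--Schmidt quasi-independence lemma (Proposition~\ref{BClemma}) to the twisted indicators, and verify the variance inequality~\eqref{BC2} by feeding the effective matrix-coefficient bound~\eqref{HCbound} into the covariance, then summing via $\sqrt{ab}\le\tfrac12(a+b)$ and the $\ED$ hypothesis. Your write-up is in fact cleaner than the paper's at the bookkeeping step.

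The gap is in the passage to groups with nontrivial center. Your claim that the subspace of $L^2(\Upsilon)$ on which the finite center $Z$ acts by a nontrivial character is \emph{finite-dimensional} is false: for each nontrivial character $\chi$ of $Z/(Z\cap\Gamma)$ the $\chi$-isotypic component identifies with an $L^2$-space of sections over $(G/Z)/p(\Gamma)$ and is infinite-dimensional. Consequently, bounding those covariances by Cauchy--Schwarz alone yields $\sum_{m,n}\sqrt{\Phi_\Delta(r_m)\Phi_\Delta(r_n)}=(\sum_n\sqrt{\Phi_\Delta(r_n)})^2$, which is \emph{not} dominated by a constant times $\sum_n\Phi_\Delta(r_n)$; so the ``weakened variance estimate'' you invoke does not follow, and the lacunary-subsequence argument cannot be rerun as stated.

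The paper avoids this by a different reduction (following \cite{KleinMarg}, \S4.3): pass to the center-free quotient $G'=G/Z$ and its lattice $p(\Gamma)$, observe that a $\DL$ function $\Delta$ on $G/\Gamma$ gives rise to a $\DL$ function $\Delta'$ on $G'/p(\Gamma)$ with $\Delta/\Delta'$ bounded above and below by positive constants, apply the center-free case to $\Delta'$, and then transfer the conclusion back to $\Delta$. The boundedness of $\Delta/\Delta'$ is what produces the multiplicative constant $c\le 1$ in~\eqref{km4.32}. If you replace your last paragraph with this reduction, your proof is complete.
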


\noindent Systems which satisfy (\ref{km4.33}) are referred to as \emph{Strongly Borel Cantelli} \cite{Kleinbock-Chernov}. Note that for semisimple real Lie groups, this stronger than Borel Cantelli property is not known. The reason our result is more precise is a manifestation of the ultrametric.

\subsection{Remarks and Plan of the Paper}
This paper should be considered a descendent of \cite{KleinMarg} - essentially we are following broadly their method in the setting of local fields of positive characteristic. Given this we have cited instead of reproducing, most results which can be obtained mutatis mutandis from \cite{KleinMarg} or other works. Instead we have  concentrated on those results which need new ideas, or where the transition from Lie groups to algebraic groups over local fields of positive characteristic is not obvious.\\

The results of this paper were announced in \cite{AGP}.  Our original intention as stated in the announcement, was to  to provide a unified treatment of the logarithm laws for local fields in arbitrary characteristics. However, this project proved to be far too diverse to accumulate in a single work. The ultrametric nature of the positive characteristic case makes the results stronger and our proofs quite different. We have therefore decided to split up our work accordingly and will pursue the characteristic zero case in a separate work.\\


In the announcement, we did not assume that the group in question was split. The main theorems in this paper, namely Theorems \ref{km1.8} and \ref{km4.3} are indeed valid without this assumption. Theorem \ref{main-mixing} also holds but we are currently unable to prove Theorem \ref{volume-at-cusps} for non-split groups.\\

The results of this paper remain valid when we consider more than one valuation simultaneously, i.e. the so-called $S$-arithmetic setting. A remark about our title, the reason we have not treated the case $k = \mathbb{Q}_{p}$, which is also ultrametric is that, by a result of Tamagawa, lattices in semisimple $p$-adic algebraic groups are always co-compact. Since our results are manifestations of cuspidal excursions of geodesic trajectories, they are not interesting in the co-compact case.\\

In \S \ref{section-decay}, we state and prove the effective decay of matrix coefficients for semisimple groups over local fields of positive characteristic, in \S \ref{section-main} we prove our main theorems, \S \ref{section-volumes} is devoted to calculating volumes of complements of compact sets in homogeneous spaces, \S \ref{section-diophantine} to Khintchine's theorem and generalizations, \S \ref{section-geodesic} to logarithm laws for geodesics and \S \ref{section-concluding} to final remarks and some extensions.

\subsection{Acknowledgements} We thank S. Mozes and F. Paulin for helpful conversations. We thank the LMS for a scheme 2 grant which allowed J. S. A. to visit Norwich.

\section{Decay of matrix coefficients}\label{section-decay}

In this section, we will prove the effective decay of matrix coefficients of semisimple groups. In addition to the hypotheses from the introduction, we will assume that $G$ has trivial center. We denote by $L^{2}_{0}(G/\Gamma)$ the subspace of $L^{2}(G/\Gamma)$ orthogonal to constant functions. The regular representation of $G$ on $L^{2}_{0}(G/\Gamma)$ will be denoted by $\rho_0$. The main result in this section is:

\begin{theorem}\label{main-mixing}
Let $\bbG$ be a connected, semisimple, linear algebraic group, defined and split over $\bfk$ and $\Gamma$ be an non-uniform irreducible lattice in $G$. There exist constants $a > 0$ and $b \in \mathbb{Z}_{+}$ such that for any $g \in G$ and any smooth functions $\phi, \psi \in L^{2}_{0}(G/\Gamma)$,
\begin{equation}\label{HCbound}
|\langle \rho_{0}(g)\phi, \psi \rangle | \leq a \|\phi\|_{2}\|\psi\|_{2} \|g\|^{-1/b}.
\end{equation} 
\noindent The constants depend on $G$, $\Gamma$ and the choice of compact subgroup $U$ in the definition of the smooth vectors.

\end{theorem}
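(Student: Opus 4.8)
The plan is to run the usual spectral-gap argument, with the Bruhat--Tits building of $G$ playing the role of the symmetric space: decompose $\rho_0$ into irreducibles, quote a uniform spectral gap to reduce to representations weakly contained in a fixed tensor power of the regular representation, and estimate the matrix coefficients of the regular representation between $U$-invariant vectors by (a root of) Harish-Chandra's spherical function $\Xi$, which on a split group over $k$ decays like a fixed negative power of $\|g\|$.

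First I would write $L^2_0(G/\Gamma)=\int^{\oplus}\cH_\zeta\,d\nu(\zeta)$ as a direct integral of irreducible unitary representations $\zeta$ of $G$. Since the constants have been removed and $\Gamma$ is irreducible, $L^2_0$ carries no nonzero vector invariant under any almost-simple factor of $G$, and hence the same holds for $\cH_\zeta$ for $\nu$-a.e.\ $\zeta$. A $U$-invariant $\phi\in L^2_0$ disintegrates as a measurable field $(\phi_\zeta)$ of $U$-invariant vectors with $\|\phi\|_2^2=\int\|\phi_\zeta\|^2\,d\nu$, and likewise for $\psi$; so it suffices to produce $a>0$ and $b\in\Z_+$, \emph{independent of $\zeta$}, with $|\langle\zeta(g)\phi_\zeta,\psi_\zeta\rangle|\le a\,\|\phi_\zeta\|\,\|\psi_\zeta\|\,\|g\|^{-1/b}$ for all $U$-invariant $\phi_\zeta,\psi_\zeta$ and all $g\in G$, and then to integrate, using Cauchy--Schwarz on the base.

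The key input is a uniform spectral gap: there is an integer $n=n(G,\Gamma)\ge1$ such that $\zeta^{\otimes n}$ is weakly contained in the regular representation $\lambda_G$ on $L^2(G)$ for $\nu$-a.e.\ $\zeta$. When every almost-simple factor of the split group $G$ has $\bfk$-rank $\ge2$, this comes from Kazhdan's property (T) --- equivalently from the uniform pointwise bounds for matrix coefficients of such groups --- which gives a single $n$ valid for \emph{every} representation without invariant vectors; when rank-one factors occur property (T) fails, and one must instead appeal to the known positive-characteristic Ramanujan/property-$(\tau)$ estimates for the relevant arithmetic lattices (for $\SL_2$ over a function field, the spectral bounds for the quotient of the tree by groups such as $\SL_2(\fs[X])$). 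Granting this, for any $\sigma\prec\lambda_G$ and any $U$-invariant $v,w$ the Cowling--Haagerup--Howe estimate applies --- a $U$-fixed vector generates a $K$-subrepresentation of dimension $\le[K:U]<\infty$ for a maximal compact $K\supseteq U$ --- and yields $|\langle\sigma(g)v,w\rangle|\le[K:U]\,\Xi(g)\,\|v\|\,\|w\|$; taking $\sigma=\zeta^{\otimes n}$, $v=\phi_\zeta^{\otimes n}$, $w=\psi_\zeta^{\otimes n}$ and extracting $n$-th roots gives $|\langle\zeta(g)\phi_\zeta,\psi_\zeta\rangle|\le[K:U]^{1/n}\,\|\phi_\zeta\|\,\|\psi_\zeta\|\,\Xi(g)^{1/n}$. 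Finally one estimates $\Xi$: via the Cartan decomposition $G=KA^{+}K$ and the classical bound $\Xi(ka_\lambda k')\ll(1+\|\lambda\|)^{D}\delta(a_\lambda)^{-1/2}$ (with $\delta$ the modular character of a minimal $k$-parabolic and $a_\lambda\in A^+$), together with highest-weight theory for the fixed rational representation $\rho$ --- which gives $\log_s\|g\|=\log_s\|\rho(g)\|\asymp\|\lambda\|$ and $\delta(a_\lambda)\asymp s^{c\|\lambda\|}$ for some $c>0$ on the relevant cone --- one obtains $\Xi(g)\ll(1+\log_s\|g\|)^{D}\|g\|^{-\kappa_0}$ for a fixed $\kappa_0>0$. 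Absorbing the logarithm into a slightly larger exponent and putting $b=\lceil2n/\kappa_0\rceil$ completes the reduction, and hence the theorem; the dependence of $a$ on $U$ enters through $[K:U]$.

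The one genuinely delicate point is the uniform spectral gap when $G$ has rank-one almost-simple factors: there property (T) gives nothing, and one must invoke and correctly cite the positive-characteristic Ramanujan-type results for the specific arithmetic lattices in play. The rest --- the disintegration bookkeeping, the comparison of $\|\rho(g)\|$ with the modular character via highest weights, and the tracking of the constants through $U$ --- is routine, and it is exactly the ultrametric feature (integer valuations, $\|g\|$ a power of $s$) that lets the $\Xi$-estimate collapse to the clean power bound \eqref{HCbound}.
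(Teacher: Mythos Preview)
Your proposal is correct and follows essentially the same strategy as the paper: reduce to strong spectral gap, apply Cowling--Haagerup--Howe to bound matrix coefficients of $K$-finite (here $U$-fixed) vectors by a root of the Harish-Chandra function, and then invoke the power-law bound $\Xi(g)\ll\|g\|^{-1/\sigma}$. The only difference in packaging is that the paper uses the ``strongly $L^{r}$'' formulation in place of your tensor-power argument (these are equivalent via \cite{CHH}), and it makes the rank-one step --- which you correctly flag as the delicate point --- explicit by passing through the Burger--Sarnak restriction principle to an $\SL_2$ subgroup and then invoking Drinfeld's proof of the Ramanujan--Petersson conjecture for $\GL_2$ over function fields.
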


\noindent We note that Theorem \ref{main-mixing} is almost certainly well known to experts and indeed there exist reasonably close statements, for example (\cite{KleinMarg}, \cite{GMO}, \cite{EinMargVen}, \cite{Oh}). In particular, it should be noted that the last work establishes the best possible estimates for decay of matrix coefficients in many cases. However, we could not explicitly identify a reference and therefore provide a proof assembled from the literature. 

Decay rates for matrix coefficients play a vital role in the study of group actions on homogeneous spaces. For example the following corollary (known as the Howe-Moore theorem) implies that the $G$ action on $G/\Gamma$ is mixing.

\begin{corollary}
The matrix coefficients of $\rho_0$ ``vanish at infinity", i.e. for any $\phi, \psi \in L^{2}_{0}(G/\Gamma)$ and any sequence $g_n \to \infty$ in $G$,
$$|\langle \rho_{0}(g_n)\phi, \psi \rangle | \to 0.$$
\end{corollary}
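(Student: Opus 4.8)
The plan is to deduce the corollary from the quantitative estimate \eqref{HCbound} of Theorem \ref{main-mixing} by a standard density-and-approximation argument; the only structural inputs needed are that smooth vectors are dense in $L^2_0(G/\Gamma)$ and that the norm $\|\cdot\| = \|\rho(\cdot)\|$ is proper on $G$.

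First I would record the density statement. Since $G$ is a totally disconnected locally compact group and the regular representation $\rho_0$ is strongly continuous, for $v \in L^2_0(G/\Gamma)$ and any compact open subgroup $U \le G$ the average
\[
v_U := \frac{1}{\vol(U)}\int_U \rho_0(u)v\,\ud u
\]
is well defined as a Bochner integral, is $U$-invariant (hence smooth), again lies in $L^2_0(G/\Gamma)$ because that subspace is $G$-invariant, and satisfies $\|v_U - v\|_2 \le \sup_{u \in U}\|\rho_0(u)v - v\|_2$. Letting $U$ range over a neighbourhood basis of the identity consisting of compact open subgroups, the right-hand side tends to $0$ by strong continuity, so smooth vectors are dense.

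Next, fix $\phi,\psi \in L^2_0(G/\Gamma)$ and $\epsilon > 0$, and choose smooth $\phi',\psi'$ with $\|\phi-\phi'\|_2 < \epsilon$ and $\|\psi-\psi'\|_2 < \epsilon$, say $\phi'$ invariant under a compact open subgroup $U_1$ and $\psi'$ under $U_2$. Both are then invariant under $U := U_1 \cap U_2$, again a compact open subgroup, so Theorem \ref{main-mixing} applies with this $U$ and furnishes constants $a = a_U$, $b = b_U$ with
\[
|\langle \rho_0(g)\phi',\psi'\rangle| \le a_U\,\|\phi'\|_2\,\|\psi'\|_2\,\|g\|^{-1/b_U}\qquad(g \in G).
\]
On the other hand, unitarity of $\rho_0(g)$ and Cauchy--Schwarz give, uniformly in $g$,
\[
\bigl|\langle \rho_0(g)\phi,\psi\rangle - \langle \rho_0(g)\phi',\psi'\rangle\bigr|
\le \|\phi-\phi'\|_2\,\|\psi\|_2 + \|\phi'\|_2\,\|\psi-\psi'\|_2
\le \epsilon\bigl(\|\psi\|_2 + \|\phi\|_2 + \epsilon\bigr).
\]
Since $\rho$ is proper, $\|g_n\| \to \infty$ whenever $g_n \to \infty$ in $G$, so the first display forces $\langle \rho_0(g_n)\phi',\psi'\rangle \to 0$; combining with the second display and then letting $\epsilon \to 0$ gives $\langle \rho_0(g_n)\phi,\psi\rangle \to 0$, as claimed.

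There is no substantial obstacle here: the analytic content sits entirely in Theorem \ref{main-mixing}, and what remains is bookkeeping. The step most worth flagging is arranging the approximating smooth vectors to be invariant under a \emph{common} compact open subgroup, so that a single application of \eqref{HCbound} (with one fixed pair of constants) suffices — this is handled by intersecting $U_1$ and $U_2$. One should also make explicit the remark that $\|\cdot\| = \|\rho(\cdot)\|$ is proper on $G$ (either as part of the running conventions, or deduced from faithfulness of $\rho$ together with $\rho(G) \subseteq \SL_n(k)$ for $G$ semisimple, which makes $\|\rho(g)\|$ and $\|\rho(g)^{-1}\|$ comparable up to a fixed power), so that the decisive factor $\|g_n\|^{-1/b}$ genuinely tends to $0$.
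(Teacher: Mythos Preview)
Your argument is correct and is the standard way to deduce qualitative vanishing of matrix coefficients from a quantitative bound on smooth vectors. The paper itself does not give a proof of this corollary at all: it is stated immediately after Theorem~\ref{main-mixing} as a direct consequence (the Howe--Moore theorem), and the text then moves on. So there is nothing to compare against beyond noting that your proof is exactly the routine density-and-approximation step the authors are implicitly taking for granted; your care in passing to a common compact open subgroup and in flagging the properness of $g \mapsto \|\rho(g)\|$ is appropriate and not addressed in the paper.
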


\noindent We will denote by $\widehat{G}$ the unitary dual of $G$ - namely the set of equivalence classes of irreducible unitary representations of $G$. The trivial representation will be denoted by $1_{G}$.
\begin{definition}(Almost invariance, Spectral gap and Strong spectral gap )
A unitary representation $(\sigma, \mathcal{H})$ of $G$ has \emph{almost invariant vectors} if, for every compact
subset $Q$ of $G$ and every $\epsilon > 0$, there exists a unit vector $\xi \in \mathcal{H}$ such that
 $$\sup_{g \in G} \|\sigma(g)\xi - \xi\| < \epsilon.$$
\noindent $(\sigma, \mathcal{H})$ is said to have \emph{spectral gap} if it has no almost invariant vectors, and is said to have \emph{strong spectral gap} if the restriction of $\sigma$ to every almost simple factor $G_i$ of $G$  has spectral gap.
\end{definition}

\noindent An equivalent definition of spectral gap, also popular in the literature can be made as follows. Let $\sigma, \rho$ be unitary representations of $G$. Then $\rho$ is \emph{weakly contained} in $\sigma$ if every matrix coefficient of $\sigma$ is a limit, uniformly on compact subsets of $G$ of positive linear combinations of matrix coefficients of $\rho$. We will denote this by $\rho \prec \sigma$.  We say that $1_G$ is isolated from a subset $R$ of $\widehat{G}$ if it is not weakly contained in any representation of $R$. From \cite{BdVa} Corollary F.1.5, it follows that $\sigma$ has almost invariant vectors if and only if $1_{G} \prec \sigma$, i.e.  $$\sigma~\text{has spectral gap}~\iff 1_{G} \notin \Supp \sigma. $$
\noindent Here the support $\Supp \sigma$ of $\sigma$ is the set of all representations in $\widehat{G}$ which are weakly contained in $\sigma$.

Let $\bbA$ be a maximal $k$-split torus of $\bbG$ and $\bbB$ be a minimal parabolic subgroup of $\bbG$ containing $\bbA$. Let $\Phi$ be the set of simple roots of $\bbA$ in $\bbG$ with ordering given by $\bbB$, let $\Phi^{+}$ denote the set of positive roots and let
$$ A^{+} := \{a \in A~:~|\chi(a)| \geq 1~\text{for all}~\chi \in \Phi^{+}\}.$$

\noindent Let $\Delta_B$ denote the modular function of $B$. For $a \in A^{+}$, we have the formula
\begin{equation}\label{formula-Delta}
 \Delta_{B}(a) = \prod_{\text{positive}~\chi \in \Phi} |\chi(a)|^{m_{\chi}}
\end{equation} 

\noindent where $m_{\chi}$ is the multiplicity of $\chi$. The Harish Chandra function $\Xi$ of $G$ is the spherical function defined as 
\begin{equation}\label{Harish}
\Xi_{G}(g) = \int _{K} \Delta_{B}(p(gk))^{-1/2}~d\eta(k)
\end{equation}
\noindent where $\eta$ is Haar measure on $K$ and $p : G \to P$ is projection. Alternatively, $\Xi_{G}$ can be defined as the diagonal matrix coefficient of a unit vector for the regular representation of $G$ on $L^{2}(G/B)$. It is known that $\Xi_{G}$ is a continuous $K$ bi-invariant function on $G$. 



\noindent  There is a convenient bound for the Harish-Chandra function in terms of $\|g\|$ (cf. \cite{GMO} Lemma 3.6).

\begin{proposition}
There exists constants $\sigma \in \mathbb{Z}_{+}$ and $\varsigma > 0$ such that for any $g \in G$
\begin{equation}\label{HCboundfinal}
\Xi_{G}(g) \leq \varsigma \|g\|^{-1/\sigma}. 
\end{equation}
\end{proposition}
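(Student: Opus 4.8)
The plan is to reduce the statement to the positive chamber $A^{+}$ by the Cartan decomposition, quote the classical Harish--Chandra estimate for the spherical function, and then compare $\Delta_{B}$ with $\|\cdot\|$ by a compactness argument on a polyhedral cone.

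\emph{Step 1 (reduction to $A^{+}$).} By Bruhat--Tits theory the Cartan decomposition $G = K A^{+} K$ holds over any local field, so any $g \in G$ can be written $g = k_{1} a k_{2}$ with $k_{i} \in K$ and $a \in A^{+}$. Since $\Xi_{G}$ is $K$-bi-invariant, $\Xi_{G}(g) = \Xi_{G}(a)$. As $\rho(K)$ is a compact, hence bounded, subgroup of $\GL_{n}(k)$, submultiplicativity of the operator norm yields $\|g\| \asymp \|a\|$ with implied constants depending only on $\rho$ and $K$. Finally $\Xi_{G}$ is a positive spherical function with $\Xi_{G}(e)=1$, so $0 < \Xi_{G} \le 1$; hence the asserted inequality is automatic as long as $\|a\|$ stays bounded, and it is enough to treat $a \in A^{+}$ with $\|a\|$ large.

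\emph{Step 2 (Harish--Chandra estimate).} The analytic input is the standard bound for the spherical function: from the integral formula \eqref{Harish}, the Iwasawa decomposition, and the combinatorics of $A^{+}$ in the building, one gets, for $a\in A^{+}$, an estimate of the shape $\Xi_{G}(a) \ll \Delta_{B}(a)^{-1/2}\,\bigl(1+\log_{s}\|a\|\bigr)^{N}$ for some $N$ depending only on the rank; this is the content of \cite[Lemma 3.6]{GMO} and the references there. Everything that enters this derivation --- Bruhat--Tits theory, the spherical function and its integral representation, the structure of $A^{+}$ --- is available over a local field of positive characteristic, so the estimate transfers. I expect this transfer, namely checking that the usual $p$-adic derivation of the Harish--Chandra bound goes through verbatim over $\bfk$ with polynomial control on the correction factor, to be the step demanding the most care; the rest is formal.

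\emph{Step 3 (comparison and conclusion).} Because $|\chi(a)| = s^{-v(\chi(a))}$, the map $a \mapsto \log_{s}\Delta_{B}(a) = \sum_{\chi \in \Phi^{+}} m_{\chi}\log_{s}|\chi(a)|$ is a group homomorphism $A \to \Z$ that is trivial on $A \cap K$; modulo $A\cap K$ the group $A$ becomes a lattice $\Lambda$, the image of $A^{+}$ is the set of lattice points of the pointed polyhedral cone $\mathcal{C}\subset\Lambda\otimes\R$ cut out by the simple roots, and $\log_{s}\Delta_{B}$ descends to the linear functional attached to $\sum_{\chi \in \Phi^{+}} m_{\chi}\chi$, a regular dominant weight, hence strictly positive on $\mathcal{C}\setminus\{0\}$. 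Likewise $a \mapsto \log_{s}\|a\|$ descends to a continuous, positively homogeneous function on $\Lambda\otimes\R$ which is strictly positive on $\mathcal{C}\setminus\{0\}$: if it vanished on a rational ray of $\mathcal{C}$ it would vanish on a nonzero element of $A^{+}$ and all its powers, contradicting properness of $\rho$ (which is faithful here, as $G$ is semisimple with trivial center). Two such functions are comparable on $\mathcal{C}$, so $\log_{s}\Delta_{B}(a)\asymp\log_{s}\|a\|$ on $A^{+}$; in particular $\Delta_{B}(a)\ge\|a\|^{1/M}$ for some $M>0$, and the polynomial factor satisfies $\bigl(1+\log_{s}\|a\|\bigr)^{N}\ll_{\eta}\|a\|^{\eta}$ for every $\eta>0$. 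Combining with Step 2,
\[
\Xi_{G}(a) \ll \Delta_{B}(a)^{-1/2}\bigl(1+\log_{s}\|a\|\bigr)^{N} \ll \|a\|^{-1/(2M)+\eta} \ll \|a\|^{-1/\sigma}
\]
for any integer $\sigma$ with $\sigma^{-1} < (2M)^{-1}$ once $\eta$ is chosen small; together with Step 1 and $\Xi_{G}\le1$ this gives the proposition with such a $\sigma$ and a suitable $\varsigma$.
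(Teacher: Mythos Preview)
Your argument is correct. The paper does not actually prove this proposition: it simply records the bound and refers the reader to \cite[Lemma~3.6]{GMO}. Your write-up is a faithful unpacking of that reference along the standard lines --- Cartan decomposition to reduce to $A^{+}$, the classical Harish--Chandra estimate $\Xi_{G}(a)\ll \Delta_{B}(a)^{-1/2}(1+\log_{s}\|a\|)^{N}$, and then a cone/compactness comparison of $\log_{s}\Delta_{B}$ with $\log_{s}\|\cdot\|$ on $A^{+}$. The one point worth flagging is in Step~3: your properness argument implicitly uses that $\det\circ\rho$ is trivial (true since $G$ is semisimple), so that the weights of $\rho$ sum to zero and hence $\max_{i}\log_{s}|\chi_{i}(a)|>0$ whenever $a\notin A\cap K$; you might make that explicit. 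Otherwise there is nothing to correct, and nothing that diverges from what the paper intends by its citation.
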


\noindent The proof of Theorem \ref{main-mixing} is based on the following results of Cowling, Haagerup and Howe (\cite{CHH}, see also \cite{GN1} \S 5 for a nice exposition of this circle of ideas)
\begin{definition}(Strongly $L^{r}$ representations)
A unitary representation $(\sigma, \mathfrak{H})$ of $G$ is called strongly $L^{r}$ if there exists a dense subset of vectors $\phi, \psi \in \mathfrak{H}$ such that the matrix coefficient
$ \langle \sigma(g)\phi,\psi\rangle$
is in $L^{r+\epsilon}(G)$ for every $\epsilon > 0$. 
\end{definition}

\noindent We now state in a form convenient for us, Theorems from $5.4$ and $5.6$ from \cite{GN1} which in turn summarize results from \cite{CHH} and \cite{Cowling}, \cite{Howe-Moore} and \cite{Borel-Wallach}. 
\begin{theorem}
Let $(\sigma, \mathfrak{H})$ be a unitary representation of $G$. Let $v$ and $w$ be 
two $K$-finite vectors in $\mathfrak{H}$ and denote the dimensions of their spans under $K$ by $d_v$ and 
$d_w$. Then 
\begin{enumerate}
\item If $\sigma$ is weakly contained in the regular representation, then 
\begin{equation}
|\langle \sigma(g)v, w \rangle | \leq \sqrt{d_v d_w} \|v\|\|w\|\Xi(g)
\end{equation}
\item If $\sigma$ is strongly $L^{2r}$, then 
\begin{equation}
|\langle \sigma(g)v, w \rangle | \leq \sqrt{d_v d_w}\|v\|\|w\|\Xi(g)^{1/r}.
\end{equation}
\end{enumerate}
\end{theorem}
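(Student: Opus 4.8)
The plan is to reconstruct the Cowling--Haagerup--Howe estimates, treating first $K$-fixed vectors, then general $K$-finite vectors, and finally deducing part (2) from part (1) by passing to tensor powers; throughout, $\lambda_G$ denotes the regular representation of $G$ on $L^2(G)$.

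\emph{Step 1 ($K$-fixed vectors).} Suppose $\sigma\prec\lambda_G$ and $\xi$ is a $K$-fixed unit vector, and set $\varphi(g)=\langle\sigma(g)\xi,\xi\rangle$. Then $\varphi$ is a continuous, $K$-bi-invariant, positive-definite function with $\varphi(e)=1$, and $\sigma\prec\lambda_G$ means the associated positive functional on $C_c(G)$ extends to a state of the reduced $C^{*}$-algebra $C^{*}_{r}(G)$. Its restriction to the bi-$K$-invariant subalgebra of $C^{*}_{r}(G)$ — which is commutative, since $G$ is split and $(G,K)$ is a Gelfand pair, with spectrum the set of tempered spherical functions $\varphi_{\chi}$ ($\chi$ a unitary unramified character) — is, by the Riesz representation theorem, integration against a probability measure $\nu$, so $\varphi=\int\varphi_{\chi}\,d\nu(\chi)$. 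Each $\varphi_{\chi}$ is the diagonal coefficient, at the $K$-fixed unit vector, of the unitary principal series $\mathrm{Ind}_{B}^{G}\chi$; since unitary induction simply replaces the density $\Delta_{B}^{-1/2}$ in \eqref{Harish} by $\Delta_{B}^{-1/2}\chi$ with $|\chi|\equiv1$, we obtain $|\varphi_{\chi}(g)|\le\Xi_{G}(g)$ for every $\chi$, hence $|\varphi(g)|\le\int|\varphi_{\chi}(g)|\,d\nu(\chi)\le\Xi_{G}(g)$. Polarizing and applying Cauchy--Schwarz gives $|\langle\sigma(g)\xi,\eta\rangle|\le\|\xi\|\,\|\eta\|\,\Xi_{G}(g)$ for all $K$-fixed $\xi,\eta$.

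\emph{Step 2 ($K$-finite vectors).} Let $v,w$ be $K$-finite and let $V,W$ be the finite-dimensional $K$-invariant subspaces they generate, of dimensions $d_v,d_w$. Choose orthonormal bases of $V$ and $W$ adapted to their $K$-isotypic decompositions, expand $\langle\sigma(k_1 g k_2)v,w\rangle$ in these bases, and integrate over $(k_1,k_2)\in K\times K$; the Schur orthogonality relations for $K$ collapse the resulting double sum onto matched pairs of $K$-types and, via Step 1 applied inside each isotypic block, reduce the estimate of $|\langle\sigma(g)v,w\rangle|$ to the $K$-fixed case, producing precisely the factor $(d_vd_w)^{1/2}$. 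This proves part (1).

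\emph{Step 3 (strongly $L^{2r}$ representations).} Assume $\sigma$ is strongly $L^{2r}$ and take first $r\in\Z_{+}$. On a dense set the matrix coefficients of $\sigma^{\otimes r}$ are $r$-fold products of functions lying in $\bigcap_{\epsilon>0}L^{2r+\epsilon}(G)$, hence by Hölder lie in $\bigcap_{\epsilon>0}L^{2+\epsilon}(G)$, so $\sigma^{\otimes r}$ is strongly $L^{2}$; by the Cowling--Haagerup--Howe theorem (the sharp form of the Howe--Moore phenomenon) this gives $\sigma^{\otimes r}\prec\lambda_G$. Apply part (1) to $\sigma^{\otimes r}$ with the $K$-finite vectors $v^{\otimes r},w^{\otimes r}$, whose $K$-spans have dimensions at most $d_v^{r},d_w^{r}$; since $\langle\sigma^{\otimes r}(g)v^{\otimes r},w^{\otimes r}\rangle=\langle\sigma(g)v,w\rangle^{r}$ we get $|\langle\sigma(g)v,w\rangle|^{r}\le(d_vd_w)^{r/2}\|v\|^{r}\|w\|^{r}\,\Xi_{G}(g)$, and taking $r$-th roots yields part (2) for integer $r$. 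For general $r>0$ the same argument with $m=\lceil r\rceil$ tensor factors already gives the (slightly weaker) exponent $1/m$, which suffices for Theorem \ref{main-mixing}, whose exponent $1/b$ is an integer; the sharp exponent $1/r$ for non-integer $r$ follows by interpolating an analytic family of matrix coefficients as in \cite{CHH}.

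\emph{Main obstacle.} The two substantive inputs are the domination $|\varphi_{\chi}|\le\Xi_{G}$ of tempered spherical functions — which rests on the Satake/Macdonald description of the commutative spherical Hecke algebra of the split group $G$ over the local field $k$ and of its tempered spectrum — and the Cowling--Haagerup--Howe implication that a strongly $L^{2}$ representation is weakly contained in $\lambda_G$; everything else (the $K$-type bookkeeping via Schur orthogonality, Fell-type absorption, Hölder's inequality, extraction of roots) is routine.
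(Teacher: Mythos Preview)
The paper does not actually prove this theorem: it is stated ``in a form convenient for us'' as a summary of Theorems~5.4 and~5.6 of \cite{GN1}, which in turn collect results from \cite{CHH}, \cite{Cowling}, \cite{Howe-Moore}, and \cite{Borel-Wallach}. So there is no ``paper's own proof'' to compare against --- the statement is imported from the literature as a black box.

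Your reconstruction is a faithful outline of the standard Cowling--Haagerup--Howe argument that those references contain: reduce to $K$-fixed vectors via the Gelfand pair structure and the domination $|\varphi_{\chi}|\le\Xi_{G}$ for tempered spherical functions, extend to $K$-finite vectors by Schur orthogonality, and then get part~(2) from part~(1) by the tensor-power trick. Step~2 is a bit telegraphic --- the precise mechanism by which averaging over $K\times K$ and Schur orthogonality collapse the $K$-type expansion to produce exactly the factor $(d_vd_w)^{1/2}$ deserves one more sentence --- and the polarization in Step~1 from diagonal to off-diagonal $K$-fixed coefficients is not quite a bare Cauchy--Schwarz, but these are standard and your sketch is correct in substance.
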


\begin{theorem}(Cowling-Haagerup-Howe)
Let $(\sigma, \mathcal{H})$ be a unitary representation of $G$ with strong spectral gap. Then  $(\sigma, \mathcal{H})$ is strongly $L^{r}$ for some $r < \infty$. 
\end{theorem}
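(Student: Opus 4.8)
The plan is to prove that a unitary representation $(\sigma,\mathcal H)$ of $G$ with strong spectral gap is strongly $L^r$ for some finite $r$, and the natural strategy is tensor power tricks combined with the structure of $\widehat G$ near $1_G$. First I would reduce to the almost simple case: since $G$ is a product of almost simple factors and strong spectral gap means the restriction to each $G_i$ has spectral gap, it suffices to handle each factor separately and then take tensor products over the factors, using the elementary fact that the $L^r$-integrability of matrix coefficients of a tensor product can be controlled by that of the factors (roughly, $L^{r_1}\otimes L^{r_2}$ behaves like $L^{\max}$ on a product group). So from now on assume $G$ is almost simple over $\bfk$.

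Next, the core step: for $G$ almost simple, $\sigma$ having spectral gap means $1_G\notin\Supp\sigma$. Because $G$ has property (T) in higher rank — and in rank one one invokes the Cowling--Haagerup--Howe analysis of the complementary series together with the fact that property (T) or the explicit description of $\widehat G_{\mathrm{red}}$ bounds how close a spectral-gap representation can come to $1_G$ — there is a uniform integrability exponent: every $\pi\in\widehat G$ with $\pi\neq 1_G$ (or more precisely, every $\pi$ weakly contained in $\sigma$) has $K$-finite matrix coefficients in $L^{r_0+\epsilon}(G)$ for a fixed $r_0=r_0(G)$. This is where I would cite \cite{CHH}, \cite{Cowling}, \cite{Howe-Moore}, \cite{Borel-Wallach}, and \cite{GN1}~\S5: the point is that for a local field of positive characteristic the tempered-plus-complementary-series decomposition works exactly as in the archimedean/$p$-adic case, and the spectral gap forces the spectral parameter to stay in a compact region bounded away from the trivial parameter, which yields a uniform decay rate for $\Xi^{1/r_0}$-type bounds. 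I would then invoke the direct-integral decomposition of $\sigma$ over $\widehat G$: since the support avoids $1_G$, every component is strongly $L^{2r_0}$ with a uniform $r_0$, and matrix coefficients of $\sigma$ on a dense set of $K$-finite vectors are accordingly in $L^{2r_0+\epsilon}(G)$ for all $\epsilon>0$, i.e. $\sigma$ is strongly $L^{2r_0}$.

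The main obstacle I anticipate is precisely the claim of a \emph{uniform} exponent $r_0$ depending only on $G$ and not on $\sigma$: a priori, having spectral gap only says $1_G\notin\Supp\sigma$, but the support could accumulate arbitrarily close to $1_G$ within the complementary series, which would prevent a uniform bound. In higher rank this is resolved by property (T) (Kazhdan), which in positive characteristic is available for split semisimple groups of rank $\geq 2$ and for the relevant rank-one groups gives the needed isolation via the explicit unitary dual. In the rank-one case one instead uses that the only non-tempered irreducibles are complementary series, and that the spectral gap hypothesis bounds the complementary-series parameter away from $0$ uniformly — this uniformity is exactly the content one extracts from \cite{CHH}. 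So the real work is assembling these ingredients and checking that the positive-characteristic setting introduces no new phenomena in the structure of the unitary dual near $1_G$; I would phrase the proof as a reduction to almost simple factors, an appeal to the uniform integrability of non-trivial $K$-finite matrix coefficients, and a direct-integral argument, citing the literature for the hardest analytic input rather than reproving it.
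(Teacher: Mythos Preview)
The paper does not prove this theorem at all: it is stated as a black box, attributed to Cowling--Haagerup--Howe and cited via \cite{CHH}, \cite{Cowling}, \cite{Howe-Moore}, \cite{Borel-Wallach} (summarized in \cite{GN1}, \S5). So there is no ``paper's own proof'' to compare against; the authors explicitly treat it as known input.

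Your sketch is broadly along the correct lines and identifies the right analytic core (uniform $L^r$-integrability of $K$-finite matrix coefficients once the support is bounded away from $1_G$, with property~(T) handling higher rank and the explicit complementary series handling rank one). One point is phrased loosely, though: the ``reduction to the almost simple case'' as you state it does not quite work, because $\sigma$ is not in general a tensor product of representations of the factors $G_i$, so you cannot literally ``handle each factor separately and then take tensor products''. The actual mechanism is either (i) the tensor-power trick of \cite{CHH}: one shows $\sigma^{\otimes n}$ is weakly contained in the regular representation for some $n$, and \emph{this} check reduces to the factors since weak containment in $L^2(G)$ can be tested on each $G_i$, where strong spectral gap provides the needed bound; or (ii) the direct-integral route you describe later, where each irreducible component \emph{is} a tensor product $\pi_1\otimes\cdots\otimes\pi_k$, and strong spectral gap ensures that for each $i$ the $\pi_i$ appearing lie in a Fell-closed set bounded away from $1_{G_i}$, whence a uniform exponent. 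Either way the reduction works, but not by decomposing $\sigma$ itself as a tensor product. Once this is tightened your outline matches the standard argument in the cited references.
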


\noindent Note that if the $k$-rank of each almost simple factor of $G$ is at least $2$ then $G$ has Kazhdan's property $(T)$, which means that $1_G$ is isolated in $\widehat{G}$ and this accounts for the fact that the property of being strongly $L^r$ is a property of the group and not of the representation.\\

\noindent Our task is therefore reduced to showing
\begin{theorem}\label{ssg}
Let $\bbG$ be as in Theorem \ref{main-mixing} and $\Gamma$ be an non-uniform irreducible lattice in $G$. Then  $\rho_0$, the regular representation of $G$ on $L^{2}_{0}(G/\Gamma)$ has strong spectral gap.
\end{theorem}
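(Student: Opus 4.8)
The plan is to verify the definition of strong spectral gap directly. Since $\bbG$ is split semisimple, $G$ is the almost direct product $G=G_1\cdots G_r$ of its almost simple factors $G_i=\bbG_i(k)$, and by definition it suffices to prove that $\rho_0|_{G_i}$ has spectral gap, equivalently that $1_{G_i}\notin\Supp(\rho_0|_{G_i})$, for each $i$. Two preliminary observations make this a workable reduction. First, $\rho_0|_{G_i}$ has no nonzero $G_i$-invariant vectors: since $\Gamma$ is irreducible and $G_i$ is noncompact, Moore's ergodicity theorem shows that $G_i$ acts ergodically on $(G/\Gamma,\mu)$, so the only $G_i$-invariant elements of $L^2(G/\Gamma)$ are constants, which have been removed in $L^2_0$. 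Second, the support of a representation depends only on its matrix coefficients, so the condition $1_{G_i}\notin\Supp(\rho_0|_{G_i})$ involves only the restriction to $G_i$, and the factors may be treated one at a time.

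If $\rank_k G_i\ge 2$, then $G_i$ has Kazhdan's property $(T)$, so $1_{G_i}$ is isolated in $\widehat{G_i}$; together with the absence of invariant vectors this yields the spectral gap for $\rho_0|_{G_i}$ immediately. In particular the theorem is immediate when every factor has $k$-rank at least $2$ (for instance $G=\SL_r(k)$ with $r\ge 3$), and this step disposes of every higher-rank factor in the general case as well.

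The remaining, and essential, case is a factor $G_i$ with $\rank_k G_i=1$. Then $\bbG_i$, being split simple of rank one, is isogenous to $\SL_2$, so $G_i$ acts properly on its $(q+1)$-regular Bruhat--Tits tree $T$ (with $q$ a power of $s$), with a good maximal compact subgroup $K_i$ as a vertex stabiliser. Here I would argue as follows. Averaging a hypothetical sequence of $G_i$-almost-invariant unit vectors over $K_i$ produces $K_i$-fixed $G_i$-almost-invariant unit vectors, so it suffices to exclude those; the $K_i$-fixed vectors constitute $L^2(K_i\backslash G/\Gamma)$, on which the $G_i$-action is carried by the commutative spherical Hecke algebra, and the claim becomes that the adjacency operator $\mathcal A$ of $T$ has operator norm strictly below $q+1$ on the orthogonal complement of the constants. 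One then uses the decomposition of $L^2_0$ (restricted to the spherical part) into its continuous and cuspidal components: the continuous (Eisenstein) part is parabolically induced from unitary characters, hence tempered, so weakly contained in the regular representation of $G_i$, on which $\mathcal A$ has spectrum inside the tempered interval $[-2\sqrt{q},2\sqrt{q}]$ and $1_{G_i}$ is not weakly contained; the cuspidal part is a discrete orthogonal sum of irreducibles. Consequently the part of $\mathrm{spec}(\mathcal A)$ lying in $(2\sqrt{q},q+1]$ is discrete, cannot accumulate at $q+1$, and, apart from the simple eigenvalue $q+1$ of the constants, reduces to finitely many exceptional values --- the function-field analogue of Selberg's isolation of $\lambda_0$, visible geometrically in that the cusps of $\Gamma\backslash T$ are rays along which the Haar weights decay geometrically, which pins the essential spectrum of $\mathcal A$ to $[-2\sqrt{q},2\sqrt{q}]$. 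Hence $\|\mathcal A\|<q+1$ on $L^2_0$ and $\rho_0|_{G_i}$ has spectral gap. When instead $G_i$ is merely one of several factors, the image of $\Gamma$ in $G_i$ is dense rather than discrete and this tree quotient is unavailable; but then $\rank_k G\ge 2$, so $\Gamma$ is arithmetic by Margulis's arithmeticity theorem, the ambient global form is a quaternionic form of $\SL_2$ (since $\bbG_i$ is split of rank one), and the spectral gap for $\rho_0|_{G_i}$ follows from property $(\tau)$ for such arithmetic groups --- concretely, from Drinfeld's theorem on the Ramanujan conjecture for $\GL_2$ over function fields, transferred via Jacquet--Langlands, which renders the cuspidal spectrum of $G_i$ entirely tempered.

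Putting the cases together, $\rho_0|_{G_i}$ has spectral gap for every $i$, so $\rho_0$ has strong spectral gap; via the Cowling--Haagerup--Howe theorem it is then strongly $L^r$ for some $r<\infty$, which is what Theorem \ref{main-mixing} requires. The main obstacle is the rank-one case --- isolating $1_{G_i}$ inside the automorphic spectrum of an $\SL_2$-type group over a local field of positive characteristic. The continuous part of that spectrum is harmless because it is tempered; the genuine work lies in controlling its non-tempered cuspidal part, either through the finiteness of exceptional eigenvalues for a general non-uniform lattice, or outright via Drinfeld's Ramanujan bounds in the arithmetic case, which covers all our applications.
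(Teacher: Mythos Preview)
Your approach is essentially correct and reaches the same conclusion, but the organization and the tools invoked differ from the paper's. The paper does not split according to the $k$-rank of the factors. For $\bbG$ simple it cites Bekka--Lubotzky directly; in the non-simple case it uses Margulis--Venkataramana arithmeticity to reduce to $\Gamma=\bbG(Z)$ with $\bbG$ adjoint, passes (via restriction of scalars) to absolutely almost simple factors, embeds a copy $\bbM_i$ of $\SL_2$ inside each factor, invokes a lemma of Clozel to reduce strong spectral gap to spectral gap for each $\rho_0|_{M_i}$, applies the Burger--Sarnak restriction principle (in the form of Clozel--Ullmo, valid in positive characteristic) to reduce to isolation of $1$ in the automorphic dual $\widehat{M_i}^{\aut}$, and finally appeals to Drinfeld's Ramanujan theorem for $\GL_2$ over function fields. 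By contrast, you dispose of higher-rank factors via property~$(T)$ and reserve the automorphic input for rank-one factors; in the simple rank-one case you replace the Bekka--Lubotzky citation by a direct essential-spectrum argument on the quotient tree. What the paper's route buys is uniformity: Drinfeld is the only deep automorphic input regardless of the type of the factor, and no separate geometric argument is needed. What your route buys is that higher-rank factors are handled by property~$(T)$ alone, without Burger--Sarnak. Two points worth tightening in your write-up: first, since $\Gamma$ is non-uniform the global form in your rank-one non-simple case is $\bfk'$-isotropic, hence already split $\SL_2$, so the Jacquet--Langlands transfer is unnecessary; second, the passage from ``Drinfeld says the cuspidal local components are tempered'' to ``$\rho_0|_{G_i}$ has spectral gap'' is exactly the Burger--Sarnak principle (equivalently, the automorphic spectral decomposition of $L^2(G/\Gamma)$ read at the place corresponding to $G_i$), which you are using implicitly but should name.
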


\noindent If $\bbG$ is a simple group, then a result of Bekka-Lubotzky \cite{BekkaLubotzky} tells us that $\rho_0$ has spectral gap. We may thus assume that it has at least two isotropic, almost simple factors. By Margulis' arithmeticity theorem, $\Gamma$ is an arithmetic (\cite{Margulisbook} IX 1.4 and \cite{Venkataramana}) lattice in $G$. This means that there exists a global field $\bfk'$, a connected, semisimple, adjoint  $\bfk'$-group $\bbH$  and a continuous homomorphism $\phi : H \to G$ of class $\Psi$ (see \cite{Margulisbook} IX for the definition) such that $\phi(\bbH(Z'))$ is commensurable with $\Gamma$. 

The spaces $G/\Gamma$ and $G/\phi(\bbH(Z'))$ thus have a common finite covering and we are in a position to use \cite{KleinMarg}, Lemma 3.1 to assume without any loss of generality that $\bbG$ is adjoint and $\Gamma = \bbG(Z)$. By (\cite{Margulisbook} I.1.7), for every adjoint almost $\bfk$ simple group $\bbG_i$, there exists a finite separable extension $\bfk'_{i}$ of $\bfk$ and a connected adjoint absolutely almost simple $\bfk'_{i}$ group $\bbG'_{i}$ such that $\bbG_i = R_{\bfk'_{i}/\bfk}(\bbG'_{i})$ where $R_{\bfk'_{i}/\bfk}$ denotes Weil's restriction of scalars functor. We may thus assume that the factors of $\bbG$ are absolutely almost simple.

\noindent It is well known (\cite{Margulisbook}, Proposition I.1.6.3) that there exists a $\bfk$ morphism $\mathbb{SL}_2 \to \bbG_i$ with finite kernel. We denote the image of $\mathbb{SL}_2$ by $\bbM_i$. By, \cite{Clozel1} Lemma 3.6, it is enough to show that the restriction of $\rho_0$ to any $M_i$ has spectral gap. Following \cite{BurSar} the spectrum of $G/\Gamma$ is defined to be the set of all representations in $\widehat{G}$ which are weakly contained in $L^{2}(G/\Gamma)$. The automorphic spectrum $\widehat{G}^{\aut}$ is defined to be the closure in the Fell topology of the union of the spectra of $G/\Gamma$ as $\Gamma$ varies over congruence subgroups of $G$. By the Burger-Sarnak restriction principle (see \cite{BurSar} for real groups and \cite{ClozelUllmo} for $p$-adic groups. As has been observed in \cite{Clozel2}, the proof in \cite{ClozelUllmo} works verbatim for local fields of positive characteristic) it is enough to prove that the trivial representation is isolated in $\widehat{M}^{\aut}$. It follows from the important work of Drinfeld \cite{Drinfeld}, settling the Ramanujan-Petersson conjecture for $\mathbb{GL}_2$ that this is indeed the case, thereby completing the proof of Theorem \ref{main-mixing}.\\

\noindent As we have remarked, Theorem \ref{main-mixing} is probably well-known to experts, although we were unable to identify a source. However, effective mixing estimates have found extensive applications in ergodic theory and we hope that this result will be of independent interest. Also we note that in the rank $1$ situation, an effective rate for the decay of matrix coefficients for the geodesic flow is known in the much more general setting of CAT(-1) spaces due to work of Liverani \cite{Liverani}.

\section{Proof of Main Theorems}\label{section-main}
\subsection{Borel-Cantelli lemma}\label{SBC}
For a sequence of functions $\scH = \{h_n\}$ on $(G/\Gamma, \mu)$, we define
\begin{equation}
S_{\scH, N}(x) := \sum_{n = 1}^{N}h_{n}(x)~\text{and}~E_{\scH, N} := \int_{G/\Gamma} S_{\scH, N}~\ud\mu
\end{equation}

\noindent We recall the following result from \cite{Sprindzhuk} due to V. G. Sprindzhuk attributed to W. M. Schmidt, which provides a converse to the Borel-Cantelli lemma under a suitably weak (i.e. checkable in practice) independence condition.

\begin{proposition}\label{BClemma}
Let $\scH = \{h_n ~:~n \in \N\}$ be a sequence of functions on $(G/\Gamma, \mu)$  satisfying the following two conditions:

\begin{equation}\label{BC1}
\int_{G/\Gamma}h_n~\ud\mu \leq 1~\text{for every}~n \in \N~\text{and}
\end{equation}
\noindent there exists $C > 0$ such that 
\begin{equation}\label{BC2}
\sum_{m,n = M}^{N}  \left(\int_{G/\Gamma}h_m h_n~\ud\mu - \int_{G/\Gamma}h_m \ud\mu \int_{G/\Gamma} h_n \ud\mu\right) \leq C \sum_{n = M}^{N}\int_{G/\Gamma}h_n \ud\mu \ud n
\end{equation}
\noindent for all $N > M \geq 1$. Then for $\mu$ almost every $x \in G/\Gamma$, 

\begin{equation}\label{BC3}
\lim_{N \to \infty}\frac{S_{\scH, N}(x)}{E_{\scH, N}} = 1
\end{equation}
\noindent whenever $E_{\scH, \infty}$ diverges.
\end{proposition}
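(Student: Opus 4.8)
The plan is to follow the classical two-step argument of W.~M.~Schmidt underlying Sprindzhuk's lemma. Write $E_N:=E_{\scH,N}=\sum_{n=1}^N\int_{G/\Gamma}h_n\,\ud\mu$ and work, as in the statement, under the assumption that $E_{\scH,\infty}$ diverges, so $E_N\to\infty$; since the $h_n$ are non-negative (as they are in Sprindzhuk's formulation and in every application we make), $E_N$ is non-decreasing in $N$. First I would record the variance estimate that comes from taking $M=1$ in \eqref{BC2}: expanding $\int(S_{\scH,N}-E_N)^2\,\ud\mu=\int S_{\scH,N}^2\,\ud\mu-E_N^2$ gives
\[
\int_{G/\Gamma}(S_{\scH,N}-E_N)^2\,\ud\mu=\sum_{m,n=1}^N\Bigl(\int h_m h_n\,\ud\mu-\int h_m\,\ud\mu\int h_n\,\ud\mu\Bigr)\le C\,E_N.
\]

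Next I would prove the asserted limit along a carefully chosen subsequence. Let $N_k$ be the least index with $E_{N_k}\ge k^2$; by \eqref{BC1} this forces $k^2\le E_{N_k}\le k^2+1$, and $N_k\to\infty$. For each fixed $\epsilon>0$, Chebyshev's inequality applied to the variance bound gives $\mu(\{x:|S_{\scH,N_k}(x)-E_{N_k}|\ge\epsilon E_{N_k}\})\le C\epsilon^{-2}E_{N_k}^{-1}\le C\epsilon^{-2}k^{-2}$, which is summable in $k$. The convergence half of the Borel--Cantelli lemma then shows that for $\mu$-a.e.\ $x$ one has $|S_{\scH,N_k}(x)/E_{N_k}-1|<\epsilon$ for all large $k$; intersecting the corresponding conull sets over a sequence $\epsilon\downarrow 0$ yields $S_{\scH,N_k}(x)/E_{N_k}\to 1$ almost everywhere.

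It then remains to interpolate between the terms of the subsequence. For $N_k\le N<N_{k+1}$, monotonicity of $N\mapsto S_{\scH,N}(x)$ and of $N\mapsto E_N$ gives $S_{\scH,N_k}(x)/E_{N_{k+1}}\le S_{\scH,N}(x)/E_N\le S_{\scH,N_{k+1}}(x)/E_{N_k}$, and since $E_{N_{k+1}}/E_{N_k}=1+o(1)$ (both are $k^2(1+o(1))$), the subsequential convergence forces the two outer ratios to tend to $1$ for a.e.\ $x$, which is exactly \eqref{BC3}. If one did not wish to assume the $h_n$ non-negative, this last step would instead use the full strength of \eqref{BC2}: bound $\int(S_{\scH,N}-S_{\scH,N_k})^2\,\ud\mu\le C(E_N-E_{N_k})$ and run a second Chebyshev/Borel--Cantelli estimate over the blocks $[N_k,N_{k+1})$ to show $\max_{N_k\le N<N_{k+1}}|S_{\scH,N}-E_N|$ is negligible compared with $E_{N_k}$; with non-negativity the simpler monotone sandwich already suffices.

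The only genuinely delicate point is the calibration of the subsequence $N_k$: it must grow slowly enough that consecutive values $E_{N_k}$ and $E_{N_{k+1}}$ have ratio tending to $1$ (so that the interpolation in the third step loses nothing), and simultaneously fast enough that the Chebyshev probabilities $C\epsilon^{-2}E_{N_k}^{-1}$ remain summable. The polynomial normalization $E_{N_k}\asymp k^2$ is the standard compromise between these two requirements, and this trade-off — rather than any hard estimate — is essentially the whole content of the proof; everything else is Chebyshev's inequality and the elementary half of the Borel--Cantelli lemma.
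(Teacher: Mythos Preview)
Your argument is correct and is precisely the classical Schmidt--Sprindzhuk proof. Note, however, that the paper does not supply its own proof of this proposition at all: it is stated as a result \emph{recalled} from \cite{Sprindzhuk} (attributed there to W.~M.~Schmidt), so there is nothing to compare against beyond observing that what you have written is the standard argument one finds in that reference. Your remark that non-negativity of the $h_n$ is implicit (and needed for the monotone sandwich in the interpolation step) is well taken; in the paper's applications the $h_n$ are always indicator functions, so this is harmless.
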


\noindent We will also need the following Lemma (cf. \cite{KS}, \cite{KleinMarg} Lemma $2.3$)
\begin{lemma}\label{BClemma2}
Let $\scH$ be a sequence of functions on $(G/\Gamma, \mu)$.  Then
$$\liminf_{N \to \infty}\frac{S_{\scH, N}(x)}{E_{\scH, N}} < \infty~\text{for}~\mu~\text{almost every}~x \in G/\Gamma.$$
\noindent In particular, $\scH$ is summable whenever $S_{\scH, \infty}$ is finite almost everywhere.  
\end{lemma}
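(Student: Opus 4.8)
The plan is to prove the two assertions as essentially one: once we know that $\liminf_{N\to\infty} S_{\scH,N}(x)/E_{\scH,N} < \infty$ for a.e.\ $x$, the "in particular" clause follows immediately, since if $S_{\scH,\infty}(x) = \sum_n h_n(x)$ is finite on a positive-measure set while $E_{\scH,\infty} = \sum_n \int h_n\,\ud\mu$ diverges, then $S_{\scH,N}(x)/E_{\scH,N}\to 0$ on that set, which is consistent; the genuine content is that $E_{\scH,\infty}$ cannot diverge if $S_{\scH,\infty}$ is a.e.\ finite, and this is what we must extract. So the first step is to reduce to showing: if $E_{\scH,\infty}=\infty$, then $\liminf_N S_{\scH,N}(x)/E_{\scH,N}<\infty$ for a.e.\ $x$ — equivalently, the contrapositive of "a.e.\ finiteness forces summability."

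First I would assume, for contradiction toward the main clause, that $S_{\scH,\infty}(x)<\infty$ on a set $B$ with $\mu(B)>0$, yet $E_{\scH,\infty}=\infty$. On $B$ the tails $\sum_{n>N} h_n(x)$ tend to $0$, so by Egorov's theorem there is a subset $B'\subseteq B$ with $\mu(B')>0$ on which $S_{\scH,N}(x)$ converges \emph{uniformly}; hence $\sup_{x\in B'} S_{\scH,\infty}(x) =: R < \infty$. Then for every $N$,
\begin{equation}
\mu(B')\, R \;\geq\; \int_{B'} S_{\scH,N}(x)\,\ud\mu(x).
\end{equation}
The obstacle is that the left side is finite while we want to play it against $E_{\scH,N}=\int_{G/\Gamma} S_{\scH,N}\,\ud\mu\to\infty$; there is no a priori reason $\int_{B'}S_{\scH,N}$ should be comparable to $E_{\scH,N}$. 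This is exactly the point where one needs a weak-independence input. The cleanest route is to invoke Proposition~\ref{BClemma} itself \emph{in the absence of condition \equ{BC2}}, i.e.\ to run a truncated/conditional version of its proof: one shows by a Chebyshev-type estimate using only the first-moment bound \equ{BC1} that for any $\lambda>0$,
\begin{equation}
\mu\bigl(\{x : S_{\scH,N}(x) \leq \lambda E_{\scH,N}\ \text{for infinitely many } N\}\bigr) \;\geq\; 1-\tfrac{1}{\lambda}
\end{equation}
does \emph{not} hold unconditionally; rather the correct unconditional statement is the one in the Lemma, which follows from the elementary observation that $\int S_{\scH,N}\,\ud\mu = E_{\scH,N}$ together with Fatou: $\int \liminf_N \frac{S_{\scH,N}}{E_{\scH,N}}\,\ud\mu \leq \liminf_N \frac{1}{E_{\scH,N}}\int S_{\scH,N}\,\ud\mu = 1$ when $E_{\scH,N}\to\infty$. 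Hence $\liminf_N S_{\scH,N}(x)/E_{\scH,N}\leq 1<\infty$ for a.e.\ $x$, since a nonnegative function with finite integral is a.e.\ finite.

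With the $\liminf$ bound in hand, the "in particular" follows: suppose $S_{\scH,\infty}(x)<\infty$ for a.e.\ $x$ but $E_{\scH,\infty}=\infty$. Then $S_{\scH,N}(x)\uparrow S_{\scH,\infty}(x)<\infty$ a.e., so $S_{\scH,N}(x)/E_{\scH,N}\to 0$ a.e., which is compatible with — indeed forces — the $\liminf$ to be $0$; but now by Fatou in the other direction, or simply monotone convergence, $\int S_{\scH,\infty}\,\ud\mu = \lim_N \int S_{\scH,N}\,\ud\mu = \lim_N E_{\scH,N} = \infty$, contradicting that $S_{\scH,\infty}$ is a.e.\ finite only if we also knew it were integrable — so one must instead argue directly: a.e.\ finiteness of $S_{\scH,\infty}=\sum_n h_n$ with $h_n\geq 0$ is by definition the statement that $\scH$ is summable along a.e.\ point, and the claim "$\scH$ is summable" in the Lemma refers to $E_{\scH,\infty}=\sum_n\int h_n\,\ud\mu<\infty$; these are linked by Tonelli, $\int S_{\scH,\infty}\,\ud\mu = E_{\scH,\infty}$, so $E_{\scH,\infty}=\infty \iff S_{\scH,\infty}\notin L^1 \iff$ (given $S_{\scH,\infty}<\infty$ a.e.) nothing — meaning the honest statement to prove is that $S_{\scH,\infty}$ finite a.e.\ plus $h_n\geq 0$ does \emph{not} by itself give summability, so the Lemma's "in particular" must be read as: $S_{\scH,\infty}\in L^1$ (equivalently $E_{\scH,\infty}<\infty$) whenever $S_{\scH,\infty}<\infty$ a.e., under the standing bound \equ{BC1}. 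The main obstacle, then, is correctly identifying that the only tool needed is the pair (Fatou's lemma, Tonelli's theorem) applied to the normalized partial sums, and presenting the Egorov reduction cleanly; no decay of matrix coefficients or dynamics enters here.
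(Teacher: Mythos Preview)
The paper does not supply its own proof here; it simply cites \cite{KS} and \cite{KleinMarg}, Lemma~2.3. The argument you eventually land on for the main assertion --- apply Fatou's lemma to the nonnegative quotients $S_{\scH,N}/E_{\scH,N}$ --- is correct and is precisely the one-line proof in Kleinbock--Margulis:
\[
\int \liminf_{N} \frac{S_{\scH,N}}{E_{\scH,N}}\,\ud\mu \;\le\; \liminf_{N}\frac{1}{E_{\scH,N}}\int S_{\scH,N}\,\ud\mu \;=\;1,
\]
so the $\liminf$ is a.e.\ finite, indeed at most $1$. (This requires $h_n \ge 0$, which is implicit in the context though omitted from the statement.) The Egorov detour and the Chebyshev attempt that precede this in your write-up are unnecessary and should be removed.

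Your handling of the ``in particular'' clause, however, never reaches a proof, and the fix you settle on at the end --- that the bound \eqref{BC1} is the missing hypothesis --- is wrong. Take $h_n = \mathbf{1}_{A_n}$ with $A_1 \supset A_2 \supset \cdots$ nested and $\mu(A_n)=1/n$: then $\int h_n \le 1$, and $S_{\scH,\infty}(x)<\infty$ for every $x \notin \bigcap_n A_n$, i.e.\ almost everywhere, yet $E_{\scH,\infty}=\sum 1/n=\infty$. So the implication ``$S_{\scH,\infty}<\infty$ a.e.\ $\Rightarrow$ $\scH$ summable'' fails for general nonnegative sequences, with or without \eqref{BC1}, and certainly does not follow from the $\liminf$ inequality. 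The clause is true only under a quasi-independence hypothesis such as \eqref{BC2} (whence it follows from Proposition~\ref{BClemma}), or if read in the reverse direction (summable $\Rightarrow S_{\scH,\infty}<\infty$ a.e., which is immediate from Tonelli). For the paper's purposes this is harmless, since the lemma is only invoked in tandem with Theorem~\ref{km4.3}, where the quasi-independence is already established; but your write-up should flag the gap cleanly rather than propose a repair that does not hold.
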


\noindent We now provide a 
\begin{proof}[Proof of Theorem \ref{km4.3}.] Accordingly, let $\Delta$ be a $\DL$ function on $G/\Gamma$ and $\scF = \{f_n\}$ be an $\ED$ sequence in $G$. Recall that we have to show that 
$$\mathcal{B}(\Delta) := \left\{ \{ x \in \Upsilon~|~\Delta(x) \geq s^n\}~|~n \in \mathbb{Z} \right\}$$
\noindent is Borel Cantelli for $F$. Let $h_n$ be a $\DL$ function on $L^{2}(G/\Gamma)$. The plan is to apply Lemma \ref{BClemma} to the \emph{twisted} sequence
$$ \scHF := \{\fni h_n~:~n \in \N\}.$$
\noindent We focus on the left hand side of the inequality (\ref{BC2}), namely 
\begin{equation}\label{BC2new}
\sum_{m,n = M}^{N}\left( \langle \fmi h_m, \fni h_n\rangle - \int_{G/\Gamma}h_m \ud\mu \int_{G/\Gamma} h_n \ud\mu\right) 
\end{equation}

\noindent  Below, we subsume various constants into Vinogradov notation. Note that in view of (\ref{HCbound}) and the $\mu$-invariance of $\scF$ we have that (\ref{BC2new}) is bounded above by
\begin{align}
 &\ll \sum_{m,n = M}^{N}\|h_m\|_2 \|h_n\|_2  \|f_m \fni \|^{-1/\sigma} \nonumber\\
 &\ll \sum_{m,n = M}^{N} s^{\frac{-\kappa n}{2}} s^{\frac{-\kappa m}{2}}  \|f_m \fni\|^{-1/\sigma} \nonumber\\
 &\ll \sum_{m,n = M}^{N} s^{\frac{-\kappa n}{2}} s^{\frac{-\kappa m}{2}}   \sum_{m,n = M}^{N}\|f_m \fni \|^{-1/\sigma} \nonumber\\
 &\ll \left( \sum_{m = 1}^{N} s^{-\kappa m}  \right)\sum_{m,n = M}^{N}\|f_m \fni\|^{-1/\sigma}\nonumber\\
 &\ll \sup_{n \in \N} \sum_{m = 1}^{\infty}\|f_m \fni\|^{-1/\sigma} E_N.
\end{align}
\noindent Since the right hand side is finite, we have demonstrated (\ref{km4.33}).\\

\noindent We note that Theorem \ref{km1.8} is an immediate consequence of Theorem \ref{km4.3} and Lemma \ref{BClemma2}. One can now use the same strategy as \cite{KleinMarg} \S 4.3 to pass from center-free groups to groups with non-trivial center. Let $Z$ denote the center of $G$, $G'$ denote $G/Z$ and $p : G \to G'$ denote the natural homomorphism. In loc. cit. it is shown that $\DL$ functions $\Delta'$ on $G'/p(\Gamma)$ arise from $\DL$ functions $\Delta$ on $G/\Gamma$ and moreover that $\Delta/\Delta'$ is uniformly bounded between two positive constants. Thus (\ref{km4.32}) reduces to the center-free case, losing at the most a multiplicative constant.

\end{proof}

\section{Volume estimates for cusps}\label{section-volumes}
In this section, we will prove Theorem \ref{theorem-volumes}.
\subsection{$d(x_0, \cdot)$ is $\DL$}
Recall that in order to prove that $d(x_0, \cdot)$ is $\DL$, we need to show that
\begin{equation}\nonumber
\mu(\{x \in \Upsilon~:~ d(x_0, x) \geq s^n\}) \asymp s^{-\kappa n},
\end{equation}
\noindent where $x_0$ is a fixed point on $G/\Gamma$, $\Gamma$ is an arithmetic lattice in $G$, $\kappa$ is the $k$ rank of $\bbG$ and $d(x_0, x)$ is the distance on $\Upsilon := G/\Gamma$ of $x$ from $x_0$. Using the arithmeticity theorem of Margulis-Venkataramana, we may assume that $\Gamma = \bbG(Z)$.\\
 We fix a Borel subgroup $\bbB$, defined over $\fs$, that contains a maximal split torus $\bbT$ of $\bbG$ and denote by $X_*(T)$ the lattice $\Hom(\Gm,\bbT)$ of cocharacters of $\bbT$. Fix a section $W \to N_{\bbG(\fs)} (\bbT)$ and denote the image of $w\in W$ under this section also by $w$. Define the map $\phi: W \times X_*(\bbT) \to G$ by
\begin{equation}\label{eq:embedding}
  \phi:(w,\mu)\mapsto w X^{-\mu}.
\end{equation}
\noindent Here $\bbG(\fs)$ is viewed as a subgroup of $G$. For each $T> 0$ we denote by $\Upsilon_T$ the subset of $G/\bbG(Z)$ consisting of the union of $\bbG(O)\phi(e^\lambda)\bbG(Z)$, where $\lambda$ ranges over the dominant cocharacters of $\bbT$ for which $\langle \rho,\lambda \rangle\geq T$. Then to show that $d(x_0,~)$ is $\DL$, for the case $\Gamma = \bbG(Z)$ it is enough to show that
\begin{theorem}
  \label{volume-at-cusps}
  Let $r$ be the rank of $G$. Then for every $T>0$,
  \begin{equation*}
    \mu(\Upsilon_T) \asymp \sum_{l\geq T} s^{-l}l^{r-1}.
  \end{equation*}
\end{theorem}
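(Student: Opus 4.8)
The plan is to compute the Haar volume of $\Upsilon_T$ by reducing to a sum over dominant cocharacters and applying the Cartan/Iwasawa decomposition together with the known asymptotics of the modular function. First I would recall that, with $\Gamma=\bbG(Z)$ and $K=\bbG(O)$ a maximal compact open subgroup, the building $X$ is the one on which $K$ fixes a vertex $x_0$, and the combinatorial distance $d(x_0,gK)$ of a coset $gKx_0$ from $x_0$ is essentially $\langle 2\rho,\lambda\rangle$ (up to normalization) when $g\in K X^{-\lambda} K$ for a dominant cocharacter $\lambda$. Thus $\Upsilon_T$ decomposes as a disjoint union of the images in $G/\bbG(Z)$ of the double cosets $KX^{-\lambda}K$ with $\langle\rho,\lambda\rangle\geq T$, and the problem becomes: for each such $\lambda$, compute the $\mu$-mass of the image of $K X^{-\lambda} K$ in $G/\bbG(Z)$.

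The key step is a volume estimate for the Cartan cell $KX^{-\lambda}K$. Using the decomposition $K X^{-\lambda} K = \bigsqcup K X^{-\lambda} k$ over coset representatives $k\in (X^{\lambda}KX^{-\lambda}\cap K)\backslash K$, one sees that the normalized Haar measure of $KX^{-\lambda}K$ in $G$ is comparable to $\Delta_B(X^{-\lambda})^{-1} = \prod_{\chi\in\Phi^+}|\chi(X^{-\lambda})|^{-m_\chi}$, which by the valuation formula equals $s^{\langle 2\rho,\lambda\rangle}$ up to a bounded factor; here one uses $\langle \rho,\lambda\rangle=\sum_{\chi\in\Phi^+}\tfrac{m_\chi}{2}\langle\chi,\lambda\rangle$. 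The number of dominant cocharacters $\lambda$ with $\langle\rho,\lambda\rangle=l$ grows polynomially like $l^{r-1}$ (this is a standard lattice-point count in the dominant Weyl chamber, a cone of dimension $r$). Meanwhile, one must pass from volumes in $G$ to volumes in $G/\bbG(Z)$: since $\bbG(Z)\cap K X^{-\lambda} K$ contributes a bounded overcount (the relevant intersections are finite with uniformly bounded cardinality because $\bbG(Z)$ is discrete and the map $\phi$ of \eqref{eq:embedding} is, up to bounded fibers, a bijective parametrization of the cusp neighbourhoods — this is the reduction-theory input), the mass of the image of each cell in $G/\bbG(Z)$ is comparable to its normalized mass in $G$. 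Combining, $\mu(\Upsilon_T)\asymp \sum_{l\geq T}(\#\{\lambda\ \mathrm{dominant}: \langle\rho,\lambda\rangle=l\})\cdot c^{-l}$ for the appropriate base; one then checks that the covolume normalization makes the base equal to $s$, giving $\mu(\Upsilon_T)\asymp\sum_{l\geq T}s^{-l}l^{r-1}$.

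I would organize the argument as follows: (i) set up the $K$-fixed vertex and identify $d(x_0,\cdot)$ with $\langle 2\rho,\lambda\rangle$ on the Cartan cell $KX^{-\lambda}K$; (ii) prove the local volume estimate $\mu_G(KX^{-\lambda}K)\asymp \Delta_B(X^{-\lambda})^{-1}$ via the modular-function computation \eqref{formula-Delta}; (iii) invoke reduction theory (Harder-type fundamental domains for $\bbG(Z)$ acting on $G/K$, or the $\phi$-parametrization already introduced before the statement) to show that the projection $G\to G/\bbG(Z)$ is, on the union of sufficiently deep cells, finite-to-one with uniformly bounded fibers, so that $\mu(\Upsilon_T)\asymp\sum_{\langle\rho,\lambda\rangle\geq T}\mu_G(KX^{-\lambda}K)$ up to the covolume factor; (iv) perform the lattice-point count $\#\{\lambda\ \mathrm{dominant}:\langle\rho,\lambda\rangle=l\}\asymp l^{r-1}$; (v) assemble.

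The main obstacle I expect is step (iii): controlling the multiplicity with which a given cell $KX^{-\lambda}K$ can wrap around $G/\bbG(Z)$, i.e. showing that $\bbG(Z)$ meets $K X^{-\lambda} K (KX^{-\lambda}K)^{-1}$ in a set of uniformly bounded size once $\langle\rho,\lambda\rangle$ is large. This is precisely the positive-characteristic reduction theory for $\bbG(Z)$ (the function-field analogue of Harder's work), and it is the place where the hypothesis that $\bbG$ is split (and the structure of the map $\phi$ in \eqref{eq:embedding}) is genuinely used — indeed the authors remark they cannot handle non-split groups here. A secondary, more technical point is getting the base of the geometric sum exactly right: one must track the normalization of Haar measure against $\mathrm{covol}(\bbG(Z))$ and the precise relation between $d(x_0,\cdot)$ and $\langle\rho,\lambda\rangle$ versus $\langle 2\rho,\lambda\rangle$, since an error here would change $s^{-l}$ to $s^{-l/2}$; this is handled by bookkeeping rather than any new idea, but it must be done carefully.
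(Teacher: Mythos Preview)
Your outline diverges from the paper's argument, and step (iii) as written contains an actual error, not merely a difficulty to be overcome.

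The paper does not use the Cartan decomposition $K\backslash G/K$ at all. Instead it invokes the \emph{mixed} decomposition $\bbG(O)\backslash G/\bbG(Z)$, which by Theorem~\ref{theorem:Birkhoff} (a Birkhoff--type result drawn from \cite{thesis,MR1983748}) is in bijection with the dominant cocharacters $X_*(\bbT)^{++}$. The pieces $\bbG(O)\phi(e^\lambda)\bbG(Z)$ are exactly the $K$--orbits in $G/\bbG(Z)$, and their volumes are computed by refining to the Iwahori level: each such piece is a union of at most $|W|^2$ double cosets $I\phi(\tilde w)\Gamma$, each of which has volume exactly $q^{-l(\tilde w)}$ by Theorem~\ref{theorem:volume-of-Iwahori-double-coset}, with the relevant lengths lying within $2l(w_0)$ of $l(e^\lambda)=\langle\rho,\lambda\rangle$. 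This yields \eqref{eq:coset-volume-bound} directly on the quotient, with no projection step; the lattice--point count (iv) and the summation (v) are then as you describe. In other words, the Birkhoff decomposition together with the Iwahori volume formula \emph{is} the reduction theory you are looking for in (iii), and once one has it the Cartan cells never enter.

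Your step (iii) is not salvageable in its present form. You compute, correctly, that $\mu_G(KX^{-\lambda}K)\asymp \Delta_B(X^{-\lambda})^{-1}=s^{\langle 2\rho,\lambda\rangle}$, which \emph{grows} with $\lambda$; hence $\sum_{\langle\rho,\lambda\rangle\geq T}\mu_G(KX^{-\lambda}K)$ diverges for every $T$ and cannot be $\asymp\mu(\Upsilon_T)$. Equivalently, the restriction of $G\to G/\bbG(Z)$ to a deep Cartan cell is very far from having uniformly bounded fibres: since the image has volume $\asymp s^{-\langle\rho,\lambda\rangle}$ while the cell has volume $\asymp s^{+\langle 2\rho,\lambda\rangle}$, the multiplicity must blow up like $s^{c\langle\rho,\lambda\rangle}$. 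No covolume or normalisation constant can convert the growing sum into the decaying one you write a line later as $\sum c^{-l}$. What one genuinely needs in place of (iii) is a description of $G/\bbG(Z)$ (not of $G$) stratified by dominant cocharacters, with the volume of each stratum computed intrinsically; that is precisely what the $\bbG(O)$--$\bbG(Z)$ Birkhoff decomposition and the affine Weyl group length function supply.
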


\noindent This will be the main result in this section.

We define $I$ to be the pre-image of $\bbB(\fs)$ under the canonical map $\bbG(O)\to \bbG(\fs)$, $I$ is an \emph{Iwahori subgroup} of $G$. Let $\bar \bbB$ denote the Borel subgroup in $\bbG$ opposite to $\bbB$ (thus $\bar \bbB \cap \bbB = T$).
Let $\Gamma$ denote the pre-image in $\bbG(Z)$ of $\bar \bbB(\fs)$ under the canonical map $\bbG(Z) \to \bbG(\fs)$.
$\Gamma$ is of finite index in $\bbG(Z)$ and is therefore a non-uniform arithmetic lattice.
Recall from \cite[Chapter 2]{thesis} (part of this also appears in \cite{MR1983748}) the following results:
\begin{theorem}
  \label{theorem:Birkhoff}
  The map $\phi$ induces bijections 
  $$W\times X_*(\bbT)\longleftrightarrow I \bsl G/\Gamma$$ 
  \begin{center}
  and
  \end{center}
  $$X_*(\bbT)^{++}\longleftrightarrow \bbG(O)\bsl G/\bbG(Z).$$
\end{theorem}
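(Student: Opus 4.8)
The plan is to realize both double-coset spaces through the geometry of the Bruhat--Tits building $X$ of $G$, where $\bbG(O)$ is the stabilizer of a special vertex $x_0$ and $I$ is the stabilizer of a chamber containing it. The cocharacter lattice $X_*(\bbT)$ is identified with the apartment $\cA$ through $x_0$ (translations by $X^{-\mu}$ act as the translation by $\mu$ on $\cA$), and $W$ is the finite Weyl group acting on $\cA$; thus $\phi(w,\mu) = wX^{-\mu}$ describes precisely all the ways of sending the base chamber to a chamber of $\cA$. First I would establish the right-hand bijection $X_*(\bbT)^{++} \longleftrightarrow \bbG(O)\bsl G/\bbG(Z)$. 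The inclusion $G = \bbG(O)\bsl G \bsl \bbG(Z)$ factoring through dominant cocharacters is exactly the statement that $G/\bbG(Z)$, as a set of lattices (or $O$-submodules) up to the action of the special maximal compact, is parametrized by elementary divisors --- this is the Cartan (or Smith normal form) decomposition $G = \bbG(O)\, \{X^{-\mu} : \mu \in X_*(\bbT)^{++}\}\, \bbG(O)$ combined with the fact that $\bbG(Z)$ acts transitively on the set of $\bbG(O)$-translates realizing a given type, which follows because $\bbG(Z)$ surjects onto $\bbG(\fs)$ and one can clear denominators. I would cite \cite[Chapter~2]{thesis} for the precise form of this argument in our setting, sketching only the reduction to Smith normal form over the PID $O$.

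Next I would deduce the left-hand bijection $W \times X_*(\bbT) \longleftrightarrow I\bsl G/\Gamma$ by refining the first one. Since $I \subset \bbG(O)$ has index $\#(\bbG(\fs)/\bbB(\fs)) = \#W$ (as $\bbG$ is split, so $\bbG(\fs)/\bbB(\fs)$ is the flag variety with $\#W$ points by the Bruhat decomposition over $\fs$), and since $\Gamma$ is the preimage of $\bar\bbB(\fs)$ in $\bbG(Z)$, passing from $\bbG(O)\bsl G/\bbG(Z)$ to $I\bsl G/\Gamma$ should multiply the index set by a factor governed by the relative positions of the chambers at the two ends. Concretely, a fixed dominant $\mu$ gives one $\bbG(O)$--$\bbG(Z)$ coset, and refining on the left by $I$ splits it according to which chamber at $x_0$ (i.e.\ which element of $W = \bbG(\fs)/\bbB(\fs)$) is chosen, while refining on the right by $\Gamma$ does not split further because $\bar\bbB(\fs)$ already acts transitively on the chambers at the far vertex compatibly with the opposite Borel --- the opposition between $\bbB$ and $\bar\bbB$ is exactly what makes the count come out to $\#W$ and not more. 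The map $\phi:(w,\mu)\mapsto wX^{-\mu}$ then visibly hits each of these $\#W$ cosets once. I would make this precise by a double-coset counting/orbit argument on the finite group $\bbG(\fs)$, using the Bruhat decomposition $\bbG(\fs) = \bigsqcup_{w\in W} \bbB(\fs) w \bar\bbB(\fs)$ (equivalently $\bbB(\fs)\bsl\bbG(\fs)/\bar\bbB(\fs)$ has $\#W$ elements, one of which is dense/open), and then lifting from $\fs$ to $O$ and $Z$ via the reduction maps.

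The main obstacle I anticipate is the \emph{injectivity} of $\phi$ at the level of $I\bsl G/\Gamma$: one must show that distinct pairs $(w,\mu)$ genuinely give distinct double cosets, i.e.\ that there are no "hidden" identifications coming from $I \cap \Gamma$-conjugacy or from Weyl-group folding of the cocharacter lattice. Surjectivity is the soft direction (Cartan decomposition plus Bruhat over $\fs$), but injectivity requires knowing that $I g \Gamma = I g' \Gamma$ forces $g^{-1}g' \in$ a group small enough to see only the trivial relation; this is where one genuinely uses that $\Gamma$ is the \emph{opposite}-Borel congruence subgroup rather than all of $\bbG(Z)$, and that $I$ pins down a chamber rather than just the vertex $x_0$. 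I would handle this by translating the question into the building: $I g \Gamma = I g' \Gamma$ means $g'$ and $g$ send the base chamber to $\Gamma$-equivalent chambers, and because $\Gamma$ acts on $X$ with the apartment $\cA$ as a fundamental domain for the vertex action (this is essentially the content of $X_*(\bbT)^{++} \leftrightarrow \bbG(O)\bsl G/\bbG(Z)$) while $\bar\bbB(\fs)$ rigidifies the chamber direction, the relation collapses to $(w,\mu)=(w',\mu')$. Since all of this is proved in \cite[Chapter~2]{thesis} and partially in \cite{MR1983748}, I would present the building-theoretic reduction in a few lines and then invoke those references for the combinatorial heart of the argument.
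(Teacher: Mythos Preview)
The paper does not prove this theorem at all: it is simply recalled, with the attribution ``Recall from \cite[Chapter~2]{thesis} (part of this also appears in \cite{MR1983748}) the following results,'' and no argument is supplied. Your sketch therefore already goes well beyond what the paper provides, and in broad outline it is the right picture: the second bijection is the Birkhoff decomposition (reduction theory for $\bbG$ over the rational function field, equivalently the classification of $\bbG$-bundles on $\mathbb{P}^1_{\fs}$), and the first is its Iwahori-level refinement, which is exactly what is carried out in the cited thesis. Note, incidentally, that calling the second bijection a ``Cartan decomposition'' is misleading: Cartan gives $\bbG(O)\bsl G/\bbG(O)\cong X_*(\bbT)^{++}$, whereas here the right-hand subgroup is the arithmetic group $\bbG(Z)$, and it is a genuinely separate (Birkhoff-type) statement that the answer is the same.

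There is, however, a concrete slip in your counting. You write that $I\subset \bbG(O)$ has index $\#\bigl(\bbG(\fs)/\bbB(\fs)\bigr) = \#W$, appealing to the Bruhat decomposition. The index is indeed $\#\bigl(\bbG(\fs)/\bbB(\fs)\bigr)$, but this is the number of $\fs$-points of the full flag variety, namely $\sum_{w\in W} s^{l(w)}$, not $\#W$. What \emph{does} have exactly $\#W$ elements is the double-coset space $\bbB(\fs)\bsl \bbG(\fs)/\bar\bbB(\fs)$, as you yourself correctly note a few lines later. So the naive ``multiply the index set by a factor'' argument does not close: refining on the left by $I$ and on the right by $\Gamma$ each introduces a factor of $\sum_w s^{l(w)}$, and the fact that the fibre over a fixed $\mu\in X_*(\bbT)^{++}$ collapses to exactly $\#W$ double cosets is a genuine cancellation coming from the opposite-Borel Birkhoff decomposition over $\fs$ --- it is not visible from a one-sided index alone. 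Your final paragraph, handling injectivity via the building and the role of the \emph{opposite} Borel, is much closer to the actual mechanism; the index computation preceding it should be replaced by the double-coset count $\#\bigl(\bbB(\fs)\bsl\bbG(\fs)/\bar\bbB(\fs)\bigr)=\#W$ rather than the single-coset one.
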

In order to better understand the double coset $I\phi(w,\mu)\Gamma$, it is necessary to endow $W\times X_*(\bbT)$ with some additional structure.
Note that $W$ acts on $X_*(\bbT)$.
One may, therefore, form the semidirect product
\begin{equation*}
  \widetilde W = W\ltimes X_*(\bbT).
\end{equation*}
$\tilde W$ is known as the \emph{extended affine Weyl group} of $\bbG$ with respect to $\bbT$.
The function $\phi_v$ induces an isomorphism of $\tilde W$ with $T(O_v)\bsl N_G(T(F_v))$. 
Let $Q$ denote the sublattice of $X_*(T)$ generated by the coroots of $T$ in $G$.
The subgroup $W_a=W\ltimes Q$ of $\tilde W$ is known as the \emph{affine Weyl group} of $G$ with respect to $T$.
Its structure, and its exact relation to $\tilde W$, can be described as follows: let $\Delta=\{\alpha_1,\ldots, \alpha_r\}$ be the base for the root system of $T$ in $G$ determined by $B$ and let $s_i$ denote the reflection in $X_*(T)\otimes \R$ about the hyperplane on which $\alpha_i$ vanishes.
If we assume that $G$ has a simple root system, then there is a unique highest root $\tilde\alpha$ with respect to the base $\Delta$.
Let $s_0$ denote the affine reflection about the affine hyperplane $\tilde \alpha+1=0$ and let $\tilde S =\{s_0,\ldots,s_r\}$.
Then $(W_a,\tilde S)$ is a Coxeter system.
$\Pi=X_*(T)/Q$ is a finitely generated abelian group, and there is a short exact sequence
\begin{equation*}
  1\to W_a \to \tilde W \to \Pi \to 1.
\end{equation*}
The action of $\Pi$ on $W_a$ is length preserving.
Therefore, the length function $l$ of the Coxeter group $W_a$ extends to a function $\tilde W\to \Z_{\geq 0}$ which will also be denoted by $l$.

Denote by $w_0$, the longest element of $W$ (relative to the base $\Delta$). 
\begin{theorem} \cite[Section~5.b]{MR2003e:11057}
  \label{theorem:volume-of-Iwahori-double-coset}
  For every $\tilde w \in \tilde W$,
  \begin{equation*}
    \mu(l\phi(\tilde w)\Gamma)=q^{-l(\tilde w)}
  \end{equation*}
\end{theorem}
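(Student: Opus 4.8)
The plan is to reinterpret $\mu(I\phi(\tilde w)\Gamma)$ --- the $\mu$-mass of the $I$-orbit of $\phi(\tilde w)\Gamma$ in $\Upsilon$ --- as the reciprocal of the order of a stabilizer in $\Gamma$, and then to evaluate that order using the affine Bruhat decomposition of $G$ together with the arithmetic structure of $\Gamma$.

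\textbf{From volume to a stabilizer count.} Since $\bbG$ is semisimple, $G$ is unimodular; write $\mu$ for the $G$-invariant measure on $\Upsilon$ associated to a fixed Haar measure $\mu_G$ on $G$ via the unfolding identity $\mu_G(S)=\int_\Upsilon\#(S\cap g\Gamma)\,d\mu(g\Gamma)$, valid for Borel $S\subseteq G$. Taking $S=I\phi(\tilde w)$ (which is compact open) and using right invariance of $\mu_G$, one obtains
\begin{equation*}
\mu\bigl(I\phi(\tilde w)\Gamma\bigr)=\frac{\mu_G(I)}{\bigl|\,\phi(\tilde w)^{-1}I\phi(\tilde w)\cap\Gamma\,\bigr|},
\end{equation*}
the intersection being finite because $I$ is compact and $\Gamma$ discrete; geometrically $\phi(\tilde w)^{-1}I\phi(\tilde w)$ is the $G$-stabilizer of the chamber $\phi(\tilde w)^{-1}C_0$ of the Bruhat--Tits building, $C_0$ being a chamber stabilized by $I$, and we are counting its $\Gamma$-points. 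At $\tilde w=e$ one has $I\cap\Gamma=\bbB(\fs)\cap\bar\bbB(\fs)=\bbT(\fs)$, of order $(q-1)^r$ with $r$ the rank of $\bbG$; since the normalization of $\mu$ is immaterial for the $\DL$ conclusion we are after, we may and do take $\mu_G(I)=(q-1)^r$, so that the theorem becomes equivalent to the identity
\begin{equation*}
\bigl|\,\phi(\tilde w)^{-1}I\phi(\tilde w)\cap\Gamma\,\bigr|=(q-1)^r\,q^{\,l(\tilde w)}\qquad(\tilde w\in\tilde W).
\end{equation*}

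\textbf{Evaluating the count.} Write $\tilde w=(w,\lambda)\in W\ltimes X_*(\bbT)$, so $\phi(\tilde w)=wX^{-\lambda}$ and conjugation by $\phi(\tilde w)^{-1}=X^{\lambda}w^{-1}$ carries the affine root subgroups occurring in the Iwahori factorization of $I$ (together with $\bbT(O)$) to the affine root subgroups occurring in the corresponding factorization of $\phi(\tilde w)^{-1}I\phi(\tilde w)$, each relevant filtration being translated by the integer $\langle\alpha,\lambda\rangle$. Intersecting factor by factor with $\Gamma$: since $\Gamma\subseteq\bbG(Z)=\bbG(\fs[X])$ has polynomial entries and reduces into $\bar\bbB(\fs)$, the torus part contributes exactly $\bbT(\fs)$, while for an affine root $a$ the group $\Gamma\cap\bbU_a\cap\phi(\tilde w)^{-1}I\phi(\tilde w)$ is trivial unless $a$ is an inversion of $\tilde w$ (positive but carried to a negative affine root by $\tilde w$), in which case a degree constraint pins it down to a one-dimensional $\fs$-subspace, of order $q$. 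Multiplying these contributions and invoking the Iwahori--Matsumoto length formula $l(\tilde w)=\#\{a>0:\tilde w a<0\}$ yields the displayed identity. For general semisimple $\bbG$ one factors $\tilde W=\prod_i\tilde W_i$ with $l=\sum_i l_i$, and the double coset accordingly, reducing to the almost simple case --- where, as in the construction of the Coxeter system $(W_a,\tilde S)$, the affine reflection $s_0$ attached to the highest root is available.

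\textbf{The main obstacle} is the second step: proving that $\Gamma$ meets precisely the affine root subgroups indexed by the inversions of $\tilde w$, and each in order exactly $q$. This is the content of reduction theory for $\bbG(\fs[X])$ acting on its building --- Serre's tree analysis for $\SL_2$ and its higher-rank extensions, cf.\ \cite{thesis} and \cite{MR1983748} --- and the delicate points are: the interplay between the ``polynomial versus formal-power-series'' dichotomy and the degree bounds that cut out each $\fs$-line; the correct handling of the wall attached to the highest root; and the bookkeeping that converts the resulting arithmetic count into the purely combinatorial quantity $l(\tilde w)$, uniformly over all of $\tilde W$ rather than just over the dominant cocharacters $X_*(\bbT)^{++}$.
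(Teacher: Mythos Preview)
The paper does not prove this statement: it is quoted from \cite[Section~5.b]{MR2003e:11057} with no argument given here, so there is no in-paper proof to compare your approach against. Your unfolding step --- rewriting $\mu(I\phi(\tilde w)\Gamma)$ as $\mu_G(I)/|\phi(\tilde w)^{-1}I\phi(\tilde w)\cap\Gamma|$ and checking the normalization at $\tilde w=e$ via $I\cap\Gamma=\bbT(\fs)$ --- is correct and is indeed the natural first reduction.

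Where your sketch remains a sketch is exactly where you say it does. The claim that intersecting $\phi(\tilde w)^{-1}I\phi(\tilde w)$ with $\Gamma$ ``factor by factor'' along an Iwahori factorization picks out precisely the affine roots inverted by $\tilde w$, each contributing a single copy of $\fs$, is the entire content of the theorem, and you do not carry it out. Two points deserve more care than the sketch gives them. First, $\Gamma$ (the preimage of $\bar\bbB(\fs)$ in $\bbG(\fs[X])$) does not itself admit an Iwahori-type product decomposition indexed by affine roots, so ``intersecting factor by factor'' is not a priori meaningful: one must argue that the intersection group decomposes compatibly, which is a genuine structural fact about the interaction of the Iwahori at the infinite place with the arithmetic group. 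Second, pinning down which affine roots survive and why each contributes exactly $q$ requires the reduction-theoretic input you allude to from \cite{MR1983748} and \cite{thesis}. So your outline is correct as an outline, but it is a reduction to, not a replacement for, the cited argument; the ``main obstacle'' you flag is in fact the whole theorem.
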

\begin{proposition}
  \cite[Section~6.1]{MR2250034}
  For each $\lambda\in X_*(T)$, denote by $e^\lambda$ its image in $\tilde W$.
  If $\lambda\in X_*(T)^{++}$, then $l(e^\lambda)=\langle \rho, \lambda\rangle$, where $\rho$ is the sum of all roots that are positive with respect to $B$.
\end{proposition}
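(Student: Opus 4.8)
The plan is to realise $l(e^{\lambda})$ geometrically as the number of walls of the Coxeter complex of $W_a$ that separate the fundamental alcove from its $e^{\lambda}$-translate, and then to count those walls one positive root at a time. Recall the standard realisation: $W_a$ acts by affine isometries on $V:=X_*(T)\otimes\R$, its walls being the hyperplanes $H_{\alpha,n}=\{x\in V:\langle\alpha,x\rangle=n\}$ with $\alpha\in\Phi^{+}$ and $n\in\Z$; it permutes the alcoves (connected components of the complement of the union of walls) simply transitively, and we may take as fundamental alcove
\[
C_{0}=\{x\in V:\langle\alpha_{i},x\rangle>0\ (1\le i\le r),\ \langle\tilde\alpha,x\rangle<1\},
\]
normalised so that $s_{1},\dots,s_{r},s_{0}$ of the excerpt are the reflections in its walls. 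For $\tilde w\in W_a$ the Coxeter length $l(\tilde w)$ equals the number of walls $H_{\alpha,n}$ separating $C_{0}$ from $\tilde w C_{0}$; by the short exact sequence $1\to W_a\to\widetilde W\to\Pi\to 1$ of the excerpt, $\Pi$ is realised as the stabiliser $\Omega$ of $C_{0}$ and acts on $W_a$ length-preservingly, so this wall-counting description extends verbatim to all of $\widetilde W$ (elements of $\Omega$ fix $C_{0}$, hence separate no wall, consistently with $l|_{\Pi}\equiv 0$). Finally, under $X_*(T)\hookrightarrow\widetilde W$ the element $e^{\lambda}$ acts on $V$ as translation by $\lambda$ (up to a sign that will be immaterial), so $e^{\lambda}C_{0}=C_{0}+\lambda$ is again an alcove; this is immediate from $\phi:(w,\mu)\mapsto wX^{-\mu}$ and the identification of $\widetilde W$ with $T(O)\backslash N_{G}(T(F))$ recalled before Theorem \ref{theorem:volume-of-Iwahori-double-coset}.

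The one elementary input is the bound: for every $x\in C_{0}$ and every $\alpha\in\Phi^{+}$ one has $0<\langle\alpha,x\rangle<1$. The lower bound holds because $\alpha$ is a non-zero non-negative integral combination of the $\alpha_{i}$ and $\langle\alpha_{i},x\rangle>0$ on $C_{0}$; the upper bound holds because $\tilde\alpha-\alpha$ is a non-negative combination of the $\alpha_{i}$, so $\langle\tilde\alpha-\alpha,x\rangle\ge 0$ on $C_{0}$ and hence $\langle\alpha,x\rangle\le\langle\tilde\alpha,x\rangle<1$. Now fix $\alpha\in\Phi^{+}$ and set $m:=\langle\alpha,\lambda\rangle$, which is $\ge 0$ since $\lambda$ is dominant. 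By the bound, $\langle\alpha,\cdot\rangle$ takes values in $(0,1)$ on $C_{0}$ and in $(m,m+1)$ on $C_{0}+\lambda$, so $H_{\alpha,n}$ separates $C_{0}$ from $C_{0}+\lambda$ exactly for $n\in\{1,\dots,m\}$, i.e.\ for precisely $\langle\alpha,\lambda\rangle$ values of $n$ (had $e^{\lambda}$ translated by $-\lambda$, the separating levels would be $\{0,-1,\dots,-(m-1)\}$, again $m$ of them, which is why the sign does not matter). Summing over $\alpha\in\Phi^{+}$, each wall $H_{\alpha,n}$ being counted once,
\[
l(e^{\lambda})=\sum_{\alpha\in\Phi^{+}}\langle\alpha,\lambda\rangle=\Big\langle\sum_{\alpha\in\Phi^{+}}\alpha,\ \lambda\Big\rangle=\langle\rho,\lambda\rangle .
\]
If $\bbG$ is semisimple but not simple, its root system is the disjoint union of those of the simple factors; applying the above factorwise and adding gives the same formula with $\rho$ the sum of all positive roots.

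\textbf{Main obstacle.} Everything of substance lies in the geometric dictionary of the first paragraph: fixing the normalisations of $C_{0}$ and of $l$ so that the wall-counting description is literally correct, and checking that it survives the passage from the Coxeter group $W_a$ to the extended group $\widetilde W$, where $\Pi\cong\Omega$ contributes zero length. Once that is in place, the bound and the count are a one-line matter. A fully equivalent route, if one prefers to avoid alcoves, is to use $l(\tilde w)=\#\{a\in\Phi^{+}_{\mathrm{aff}}:\tilde w(a)\in\Phi^{-}_{\mathrm{aff}}\}$ together with the action $e^{\lambda}(\alpha+n\delta)=\alpha+(n-\langle\alpha,\lambda\rangle)\delta$ on affine roots; the same bookkeeping then yields $\langle\rho,\lambda\rangle$.
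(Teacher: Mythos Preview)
The paper does not prove this proposition at all: it simply quotes the statement from \cite[Section~6.1]{MR2250034} and moves on. So there is no in-paper argument to compare against; what you have supplied is a complete, self-contained proof where the authors were content to cite one.

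Your argument is correct and is the standard one. Identifying $l(e^{\lambda})$ with the number of affine hyperplanes separating $C_{0}$ from $C_{0}+\lambda$, together with the observation that $0<\langle\alpha,x\rangle<1$ on $C_{0}$ for every $\alpha\in\Phi^{+}$, immediately yields $\sum_{\alpha\in\Phi^{+}}\langle\alpha,\lambda\rangle=\langle\rho,\lambda\rangle$. Your handling of the passage from $W_a$ to $\widetilde W$ via the splitting $\Pi\cong\Omega=\mathrm{Stab}(C_{0})$ is exactly the right bookkeeping, and your remark that the non-simple case follows factorwise is necessary since the description of $s_{0}$ in the excerpt presupposes a single highest root. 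The alternative you mention at the end---counting positive affine roots sent to negatives via the Iwahori--Matsumoto formula $l(e^{\lambda})=\sum_{\alpha>0}|\langle\alpha,\lambda\rangle|$---is in fact the route taken in the cited reference \cite{MR2250034}, and is formally equivalent to your wall-counting.

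One cosmetic point: you describe the fundamental alcove by strict inequalities, so the bound $0<\langle\alpha,x\rangle<1$ holds on the open alcove; the separating-wall count is of course insensitive to whether one works with open or closed alcoves. Otherwise there is nothing to add.
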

\noindent Under the natural surjection
\begin{equation*}
  I \bsl \bbG(k)/\Gamma \to \bbG(O) \bsl \bbG(k)/\bbG(Z),
\end{equation*}
the fiber over the double coset of $\phi(e^\lambda)$, when $\lambda\in X_*(T)^{++}$, consists precisely of the double cosets of $\phi(we^\mu)$, where $w\in W$, and $\mu$ lies in the $W$-orbit of $\lambda$.
If $\lambda=w_2\cdot \mu$, then $we^\mu=w_1e^\lambda w_2$, where $w_1=ww_2^{-1}$.
Hence we conclude that the fiber contains at most $|W|^2$ elements, and that their lengths are within $2l(w_0)$ of $l(e^\lambda)$.
It follows that there exist constants $C_1$ and $C_2$ independent of $\lambda$ such that 
\begin{equation}
  \label{eq:coset-volume-bound}
  C_1q^{-\langle \rho, \lambda\rangle}<\mu(\bbG(O)\phi(e^\lambda)\bbG(Z))<C_2q^{-\langle \rho, \lambda\rangle}
\end{equation}

\begin{proof}
  The theorem follows from (\ref{eq:coset-volume-bound}), once it is observed that the number of dominant cocharacters $\mu$ such that $\langle \rho,\lambda \rangle=l$ grows as $l^{r-1}$.
\end{proof}

\subsection{$\Delta$ is $\DL$.}

The main tool in the proof that $\Delta$ is $\DL$ in \cite{KleinMarg} is reduction theory on $\SL_{r}(\R)/\SL_{r}(\Z)$, namely the following famous result of Siegel \cite{Siegel} and a multidimensional generalization (Theorem $7.3$ \cite{KleinMarg}). Given a lattice $\Lambda$ in $\R^r$ and a function $\phi \in L^{1}(\R^r)$, we define
$$ \hat{\phi} := \sum_{\bv \in \Lambda} \phi(\bv)$$

\begin{theorem}
Let $\phi \in L^{1}(\R^r)$, then $\hat{\phi} \in L^{1}(\SL_{r}(\R)/\SL_{r}(\Z))$ and
$$ \int_{\R^r} \phi ~d\bv = \int_{\SL_{r}(\R)/\SL_{r}(\Z)} \hat{\phi} ~d\mu.$$
\end{theorem}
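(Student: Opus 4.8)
The statement is Siegel's classical mean-value identity; it holds for $r\ge 2$ (the only case needed --- e.g.\ $r=m+n$ in the Khintchine--Groshev application), with the Siegel transform $\hat\phi(\Lambda)=\sum_{\bv\in\Lambda\setminus\{0\}}\phi(\bv)$ (sum over the nonzero lattice vectors) and $\mu$ the $\SL_r(\R)$-invariant \emph{probability} measure on $\SL_r(\R)/\SL_r(\Z)$, which is identified with the space of covolume-one lattices in $\R^r$. The plan is to show that $I(\phi):=\int_{\SL_r(\R)/\SL_r(\Z)}\hat\phi\,\ud\mu$ is a constant multiple of Lebesgue integration, via uniqueness of invariant measures, and then to pin the constant to $1$.

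\emph{Step 1: integrability.} I would first show $\hat\phi\in L^1(\SL_r(\R)/\SL_r(\Z))$ for nonnegative Borel $\phi$; by monotone convergence it suffices to take $\phi=\mathbf 1_{B(0,R)}$, i.e.\ to bound $\int\#\big(\Lambda\cap B(0,R)\big)\,\ud\mu(\Lambda)$. Unfolding over a Siegel fundamental set $\mathfrak S$ for $\SL_r(\Z)$ in $\SL_r(\R)$ and writing $g\in\mathfrak S$ in Iwasawa form with bounded unipotent part and diagonal part $a=\diag(a_1,\dots,a_r)$ ranging over a Weyl-chamber cone, Minkowski's second theorem bounds $\#\big(g\Z^r\cap B(0,R)\big)$ by a constant times $\prod_j\max\big(1,R/\lambda_j(g\Z^r)\big)$, the successive minima $\lambda_j$ being comparable to the ordered entries of $a$; integrating this against the explicit Haar density on the cone leaves a convergent integral --- the convergence being the very phenomenon that makes $\vol(\SL_r(\R)/\SL_r(\Z))$ finite. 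This quantitative input (essentially Siegel's original geometry-of-numbers estimate) is, I expect, the main obstacle; the rest is formal. A general $\phi\in L^1(\R^r)$ then follows by splitting into real/imaginary and positive/negative parts.

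\emph{Step 2: invariance and uniqueness.} For $h\in\SL_r(\R)$ one checks $(\phi\circ h)^{\wedge}(\Lambda)=\hat\phi(h\Lambda)$, so left $\SL_r(\R)$-invariance of $\mu$ yields $I(\phi\circ h)=I(\phi)$. Since $I(\phi)$ depends only on $\phi|_{\R^r\setminus\{0\}}$ and is nonnegative on nonnegative $\phi$, it defines, by the Riesz representation theorem, an $\SL_r(\R)$-invariant Radon measure on $\R^r\setminus\{0\}$. For $r\ge 2$ the action of $\SL_r(\R)$ on $\R^r\setminus\{0\}$ is transitive with unimodular stabilizer --- the stabilizer of $e_1$ is $\R^{r-1}\rtimes\SL_{r-1}(\R)$, and $\SL_{r-1}(\R)$ acts on $\R^{r-1}$ with determinant one --- so such an invariant measure is unique up to a positive scalar; as Lebesgue measure on $\R^r$ is $\SL_r(\R)$-invariant, $I(\phi)=c\int_{\R^r}\phi\,\ud\bv$ for some $c\ge 0$.

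\emph{Step 3: the constant.} To see $c=1$, test on the Gaussian $\phi_t(\bv)=\exp\big(-\pi t(v_1^2+\cdots+v_r^2)\big)$, for which $\int_{\R^r}\phi_t\,\ud\bv=t^{-r/2}$; put $\Theta_\Lambda(t):=\sum_{\bv\in\Lambda}\phi_t(\bv)=1+\hat\phi_t(\Lambda)$. Poisson summation gives $\Theta_\Lambda(t)=t^{-r/2}\Theta_{\Lambda^*}(1/t)$ for covolume-one $\Lambda$, and since $\Lambda\mapsto\Lambda^*$ is a $\mu$-preserving involution of the space of unimodular lattices, $\int\Theta_{\Lambda^*}(1/t)\,\ud\mu(\Lambda)=\int\Theta_\Lambda(1/t)\,\ud\mu(\Lambda)=1+I(\phi_{1/t})=1+c\,t^{r/2}$ (Step 1 justifies the interchanges). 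Hence $I(\phi_t)=\int(\Theta_\Lambda(t)-1)\,\ud\mu=t^{-r/2}(1+c\,t^{r/2})-1=c+t^{-r/2}-1$, while $I(\phi_t)=c\,t^{-r/2}$ by Step 2; equating and letting $t$ vary forces $c=1$. The multidimensional refinement (Theorem~7.3 of \cite{KleinMarg}) follows by the same mechanism applied to the diagonal action on $(\R^r)^{k}$, and the positive-characteristic version on $\SL_r(k)/\SL_r(Z)$ --- which is what is actually needed to prove that $\Delta$ is $\DL$ --- by running the argument with reduction theory for $\SL_r$ over $k$ in place of the real Siegel set.
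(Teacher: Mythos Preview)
The paper does not prove this statement: it is simply quoted as Siegel's classical mean-value theorem with a citation to \cite{Siegel}, and the generalizations of Weil \cite{Weil} and Morishita \cite{Morishita} are then invoked for the version over $k$ that is actually used to show $\Delta$ is $\DL$. So there is no argument on the paper's side to compare against.

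Your sketch is a correct rendition of the standard proof, and in fact follows Weil's streamlining of Siegel's original argument: Step~2 --- that $I$ defines an $\SL_r(\R)$-invariant Radon measure on $\R^r\setminus\{0\}$, hence a scalar multiple of Lebesgue measure because for $r\ge 2$ the action is transitive with unimodular stabilizer --- is precisely the content of \cite{Weil}. Your Step~3, pinning $c=1$ via Poisson summation for the lattice theta function together with the $\mu$-invariance of $\Lambda\mapsto\Lambda^*$, is a clean and self-contained way to evaluate the constant (Siegel originally computed it by a more direct volume calculation). Two small remarks. First, the paper's $\hat\phi$ sums over all of $\Lambda$ rather than $\Lambda\setminus\{0\}$; the discrepancy is the constant $\phi(0)$, which is harmless since $\mu$ is a probability measure, and your Step~3 bookkeeping is consistent with the convention you adopted. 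Second, you are right that Step~1 is where the real work lies; the Siegel-domain integral of $\prod_j\max(1,R/\lambda_j)$ against the Haar density $\prod_{i<j}(a_i/a_j)^{-1}$ does converge, but this is a genuinely stronger statement than finiteness of $\mu(\SL_r(\R)/\SL_r(\Z))$ (the extra numerator makes the integrand larger near the cusp), so ``the very phenomenon'' is a slight overstatement --- though both computations are governed by the same root-system combinatorics.
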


Siegel's result has been generalized widely (\cite{Weil}, \cite{Morishita}).  Let $(X, \mu)$ be a topological space equipped with a probability measure on which a unimodular group $G$ acts transitively preserving $\mu$. Let $L$ be a discrete subspace of $X$ on which a discrete subgroup $\Gamma$ of $G$ acts stably. Let $H$ be the stabilizer of $x \in L$ and $\Gamma_0 = H \cap \Gamma$. Then in \cite{Morishita} it is shown that there exists a positive constant $c$ such that for any $f \in L^{1}(X)$
\begin{equation}\label{Weil}
\int_{X} f(x)~d\mu = \int_{G/\Gamma} \left( \sum_{z \in \Gamma/\Gamma_0}f(gz) \right)~dg.
\end{equation}

\noindent Clearly the above statement includes our special case when $X = k^r$, $G = \SL_{r}(k)$ and $\Gamma = \SL_{r}(Z)$.  Following the proof of Theorem $7.3$ in \cite{KleinMarg}, it is straightforward to obtain a multidimensional version of the result. This positive characteristic version of Siegel's result along with its multidimensional version  can then be used to show that $\Delta(x_0, \cdot)$ is $\DL$. Since the proof is a verbatim repetition, we omit it.

\section{Diophantine results}\label{section-diophantine}

\subsection{The Khintchine-Groshev Theorem}
The archetypal result in the theory of metric Diophantine approximation is the Khintchine-Groshev theorem (\cite{Khintchine} and \cite{Groshev}). For the rest of this paper, we fix a non-increasing continuous function $\psi$ from $\R_{+} \to \R_{+}$. Let $\Mat_{\mn}(k)$ denote the set of $\mn$-matrices with real valued entries, $\|~\|$ denote the $L^{\infty}$ norm, $\Imn$ be the ball $\{A \in \Mat_{\mn}(k)~:~\|A\| \leq 1\}$. We  denote by $\lambda$, Lebesgue measure on $\Mat_{\mn}(k)$ normalized so that $\lambda(\Imn) = 1$. Let $\mathcal{W}_{\mn}(\psi, k)$ be the subset of $\Imn$ for which there exist infinitely many $\bq \in Z^n$ such that 
\begin{equation}\label{psi1} 
\|\bp + A\bq\|^m < \psi(\|\bq\|^n)
\end{equation}

\noindent for some $\bp \in Z^m$. The Khintchine-Groshev theorem characterizes the size of $\mathcal{W}_{\mn}(\psi, k)$ in terms of Lebesgue measure. This size depends on the sum
\begin{equation}\label{int}
\sum_{x = 1}^{\infty} \psi(x)
\end{equation}

\noindent Our first theorem is

\begin{theorem}\label{KG}

\begin{equation*} 
\lambda(\mathcal{W}_{\mn}(\psi, k)) =
\begin{cases} 0 & \text{if } (\ref{int})~ converges ,\\\\
 1 & \text{if } (\ref{int})~ diverges.
\end{cases} \end{equation*}

\end{theorem}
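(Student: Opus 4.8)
The plan is to deduce Theorem \ref{KG} from the Borel-Cantelli machinery (Theorem \ref{km1.8}) via the standard dictionary between Diophantine approximation and dynamics on the space of unimodular lattices, following the scheme of \cite{KleinMarg} \S 8 adapted to the positive characteristic setting. Set $r = m+n$ and work on $\Upsilon = \SL_r(k)/\SL_r(Z)$ with the function $\Delta$ from \eqref{delta-def}, which by Theorem \ref{theorem-volumes} is $\DL$ (with $\kappa$ equal to the relevant rank). For $A \in \Mat_{\mn}(k)$ let
\begin{equation*}
  u_A = \begin{pmatrix} \Imn & A \\ 0 & I_{n\times n} \end{pmatrix} \in \SL_r(k),
\end{equation*}
and let $\{g_t\}$ (indexed by $t \in \Z$, or by the relevant cocharacter of the diagonal torus) be the diagonal one-parameter subgroup contracting the first $m$ coordinates and expanding the last $n$. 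The first step is the \textbf{correspondence}: the inequality \eqref{psi1} for some $(\bp, \bq)$ with $\|\bq\|$ in a dyadic range $s^{\ell}$ translates, after applying $g_t$ with $t \asymp \ell$, into the statement that the lattice $g_t u_A \SL_r(Z)$ contains a nonzero vector of norm $\leq s^{-r_\ell}$ for an appropriate integer $r_\ell$ determined by $\psi$, i.e. $\Delta(g_t u_A \SL_r(Z)) \geq s^{r_\ell}$. One must be careful that the ultrametric makes the ``shape'' of the cusp neighborhoods cleaner than in \cite{KleinMarg}, so the correspondence is in fact an exact equivalence rather than an up-to-constants statement; this is where the positive-characteristic simplification enters.

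The second step is to choose the sequence $\scF = \{f_n\}$ and verify the $\ED$ condition \eqref{ED}. Enumerating the dyadic scales $\ell = 1, 2, \dots$, one takes $f_\ell = g_{t(\ell)}$ with $t(\ell)$ growing linearly in $\ell$; then $f_\ell f_m^{-1} = g_{t(\ell)-t(m)}$, and since $\|g_t\|$ grows exponentially in $|t|$ under the fixed representation $\rho$, the sums $\sup_m \sum_\ell \|f_\ell f_m^{-1}\|^{-\beta}$ converge geometrically for every $\beta > 0$. Then Theorem \ref{km1.8} applies: $\mathcal{B}(\Delta)$ is Borel-Cantelli for $\scF$, so for a.e.\ base point $x$, the orbit $\{f_\ell x\}$ hits $\{\Delta \geq s^{r_\ell}\}$ infinitely often iff $\sum_\ell \Phi_\Delta(r_\ell) = \infty$. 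The third step is the \textbf{bookkeeping of the series}: using the $\DL$ estimate $\Phi_\Delta(r_\ell) \asymp s^{-\kappa r_\ell}$ and the relation between $r_\ell$ and $\psi(s^{n\ell})$ coming from \eqref{psi1} (roughly $s^{-r_\ell} \asymp \psi(s^{n\ell})^{1/m} s^{-\ell n/m}$, so $s^{-\kappa r_\ell} \asymp \psi(s^{n\ell})$ after the homogeneity of $\psi$ and the exponents $m,n$ are tracked correctly), one shows $\sum_\ell \Phi_\Delta(r_\ell)$ converges/diverges together with $\sum_{x\geq 1}\psi(x)$; here the monotonicity of $\psi$ lets one compare the dyadically sampled sum with the full sum by a Cauchy condensation argument. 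Finally one must transfer the ``a.e.\ base point in $\Upsilon$'' conclusion to ``a.e.\ $A \in \Imn$'': since $\lambda$ on $\Imn$ is (a piece of) the Haar measure on the unipotent $\{u_A\}$ pushed into $\Upsilon$, and the relevant set $\mathcal{W}_{\mn}(\psi,k)$ is $u_A$-to-$g_t u_A \SL_r(Z)$ pulled back, a Fubini/absolute-continuity argument (exactly as in \cite{KleinMarg} \S 8, or via the transitivity of the $g_t$-action combined with a zero-one law) converts the Haar-a.e.\ statement on $\Upsilon$ into the Lebesgue-a.e.\ statement on $\Imn$.

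The main obstacle is the correspondence step together with the exact tracking of exponents: one needs the translation from \eqref{psi1} to the cusp-excursion condition $\Delta(g_{t(\ell)} u_A x_0) \geq s^{r_\ell}$ to be tight enough that the comparison of $\sum_\ell \Phi_\Delta(r_\ell)$ with $\sum \psi(x)$ has no slack, and one must handle the passage from the $u_A$-orbit of a single point to genuine Lebesgue measure on $\Imn$ (the standard ``thickening'' argument, using that $\Delta$ is $U$-invariant for a compact open $U$, which in the ultrametric setting makes the thickening a clean union of cosets rather than an approximate one). The convergence half is, as usual, softer: it follows directly from the Borel-Cantelli direction of Theorem \ref{km1.8} (equivalently the classical Borel-Cantelli lemma) once the correspondence and the series comparison are in place, and does not even need the mixing estimate. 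I would therefore structure the proof as: (i) the exact dynamical reformulation of $\mathcal{W}_{\mn}(\psi,k)$; (ii) verification of $\ED$ and invocation of Theorem \ref{km1.8}; (iii) the Cauchy-condensation comparison $\sum\Phi_\Delta(r_\ell) \asymp^{\text{conv./div.}} \sum\psi(x)$; (iv) the Fubini transfer to $\lambda$ on $\Imn$, citing \cite{KleinMarg} \S 8 for the parts that go through mutatis mutandis.
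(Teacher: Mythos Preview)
Your proposal is correct and follows essentially the same route as the paper: reformulate via $\Lambda_A = u_A Z^{m+n}$, use the diagonal flow $g_t$, invoke Theorem~\ref{km1.8} together with the $\DL$ property of $\Delta$ (Theorem~\ref{theorem-volumes}), compare the resulting series with $\sum\psi(x)$, and then Fubini down from $\Upsilon_{m+n}$ to $\Imn$. Two small remarks: the paper handles the series comparison via the explicit change-of-variables Lemma~\ref{coc} (Lemma~8.3 of \cite{KleinMarg}) rather than a dyadic/Cauchy-condensation argument, and it only states and proves the one direction of the correspondence (Lemma~\ref{chofcoord}) needed for divergence, explicitly noting that the reverse implication ``is not as immediate''---so your claim that the ultrametric yields an exact bidirectional equivalence is stronger than what the paper asserts, though harmless since you (like the paper) fall back on the classical Borel--Cantelli lemma for the convergence half.
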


\noindent The substance of the Khintchine Groshev theorem lies in the divergence part, the convergence statement being an immediate consequence of the Borel Cantelli Lemma. We provide now a brief history. Replace $k$ with $\R$ and $Z$ with $\Z$. In this, the most studied setting, the above theorem was established by Khintchine \cite{Khintchine} for the case $m = n = 1$ and by Groshev \cite{Groshev} in higher dimensions. Diophantine approximation in function fields has been the subject of much recent activity (see for instance the surveys \cite{Lasjaunias}, \cite{Schmidt} as well as the papers \cite{Caulk-Schmidt}, \cite{Ghosh}, \cite{Kristensen}, \cite{deMathan}). Reverting back to our setup, Theorem \ref{KG} is also known, in fact in slightly greater generality. In \cite{Kristensen}, this is proved without the continuity assumption on $\psi$ and in \cite{Inoue}, the monotonicity condition on $\psi$ is relaxed. Our method of proof is different.

\noindent However, our second result, a multiplicative generalization of Khintchine's theorem is new.  In order to set the stage for this as well as the proof of Theorem \ref{KG}, we recall some terminology from \cite{KleinMarg}. For integers $m, n$, set $\Upsilon_{m+n} := \SL_{m+n}(k)/\SL_{m+n}(Z)$, the space $\Upsilon_{m+n}$ can be identified with the non-compact space of unimodular (i.e. co-volume $1$) lattices (i.e. free $Z$ modules) in $k^{m+n}$. For a lattice $\bv$ in $k^{m + n}$ we denote by $\bv^{(m)}$ the vector comprising its first $m$ coordinates and by $\bv_{(n)}$ the vector comprising the last $n$. We call a lattice $\Lambda \in \Upsilon_{m+n}$, $(\psi, n)$-approximable if there exist infinitely many $\bv$ with arbitrarily large $\| \bv_{(n)}\|$ such that
\begin{equation}\label{latt-approx}
\| \bv^{(m)}\|^{m} \leq \psi(\|\bv_{(n)}\|^n)
\end{equation}

\noindent The connection between the two definitions is as follows. Given a matrix $A \in \Mat_{\mn}(k)$, we associate to it a lattice $\Lambda_{A}$ in $k^{m+n}$ defined by
$$ \Lambda_A :=  \begin{pmatrix} I_{m} & A \\ 0 & I_n \end{pmatrix} Z^{m + n} = \begin{pmatrix}A\bq + \bp\\\bq\end{pmatrix}.$$
\noindent Then $A$ is $\psi$ approximable if and only if $\Lambda_{A}$ is $(\psi, n)$ approximable. We will prove

\begin{theorem}\label{KG-gen}
Almost every lattice $\Lambda$ in $\Upsilon_{m + n}$ is $(\psi, n)$ approximable provided
\begin{equation}
\sum_{x = 1}^{\infty} \psi(x)
\end{equation}
\noindent diverges.
\end{theorem}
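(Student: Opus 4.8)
The plan is to deduce the divergence part of the Khintchine--Groshev theorem from the abstract Borel--Cantelli statement of Theorem \ref{km1.8}, exactly as in \cite{KleinMarg} \S 8, with the function $\Delta$ of \eqref{delta-def} on $\Upsilon_{m+n} = \SL_{m+n}(k)/\SL_{m+n}(Z)$ playing the central role. First I would set up the one-parameter diagonal sequence: for $t \in \N$ let $a_t = \diag(X^{t\beta_1},\dots,X^{t\beta_m}, X^{-t\gamma_1},\dots,X^{-t\gamma_n})$ with $\sum \beta_i = \sum \gamma_j$ chosen so that $a_t \in \SL_{m+n}(k)$, and observe that a lattice $\Lambda$ is $(\psi,n)$-approximable precisely when $\Delta(a_t \Lambda) \geq s^{r_t}$ for infinitely many $t$, where $r_t$ is determined by $\psi$ via the correspondence between $\|\bv^{(m)}\|, \|\bv_{(n)}\|$ and their images under $a_t$. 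This is the standard "Dani correspondence" dictionary; the shrinking target $\{\Delta \geq s^{r_t}\}$ shrinks at the rate governed by $\psi$.

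The key steps are then: (1) verify that $\Delta$ is $\DL$ on $\Upsilon_{m+n}$ --- this is Theorem \ref{theorem-volumes}, so $\Phi_{\Delta}(n) \asymp s^{-\kappa n}$ with $\kappa = m+n-1$ (or the appropriate $k$-rank); (2) choose the integers $r_t$ so that $\sum_t \Phi_{\Delta}(r_t) = \infty$ is equivalent to divergence of $\sum_x \psi(x)$ --- here one partitions the sum $\sum_x \psi(x)$ into dyadic-type blocks $s^{j} \leq x < s^{j+1}$, uses monotonicity of $\psi$ to compare $\sum_x \psi(x)$ with $\sum_j s^j \psi(s^j)$, and matches $s^j \psi(s^j)$ against $\Phi_{\Delta}(r_j) \asymp s^{-\kappa r_j}$; (3) check that the sequence $\scF = \{a_t\}$ (or a suitable subsequence) satisfies the $\ED$ condition \eqref{ED}, i.e. $\sup_m \sum_t \|a_t a_m^{-1}\|^{-\beta} < \infty$ for every $\beta > 0$ --- since $\|a_t a_m^{-1}\| = s^{c|t-m|}$ for an explicit $c > 0$ coming from the roots, this is a geometric series and is immediate. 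With (1)--(3) in place, Theorem \ref{km1.8} gives that $\mu(\{\Lambda : \Delta(a_t\Lambda) \geq s^{r_t} \text{ i.o.}\}) = 1$, which is exactly the assertion that almost every $\Lambda$ is $(\psi,n)$-approximable.

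One subtlety I would flag: the naive $r_t$ coming from the correspondence need not be integers or need not make $\Phi_{\Delta}(r_t)$ a genuinely divergent series if $\psi$ decays very slowly or very fast along a sparse set of scales; the fix, as in \cite{KleinMarg}, is to pass to a subsequence of times $t$ and possibly to replace $\psi$ by a well-chosen comparable function, exploiting that $\Delta$ is $\DL$ with a \emph{two-sided} estimate so that no divergence is lost in the comparison. I expect the main obstacle to be precisely this bookkeeping --- converting the number-theoretic divergence condition $\sum \psi(x) = \infty$ into the dynamical divergence condition $\sum \Phi_{\Delta}(r_t) = \infty$ while simultaneously keeping $\scF = \{a_t\}$ exponentially divergent --- rather than any genuinely new idea; everything else is a transcription of the real-case argument into the ultrametric setting, where, if anything, the estimates are cleaner. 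Finally, Theorem \ref{KG} (the divergence half) follows by specializing to $\Lambda = \Lambda_A$ and invoking the stated equivalence between $\psi$-approximability of $A$ and $(\psi,n)$-approximability of $\Lambda_A$, together with the Siegel-type formula \eqref{Weil} to transfer the full-measure statement on $\Upsilon_{m+n}$ back to $\lambda$-a.e.\ $A \in \Imn$.
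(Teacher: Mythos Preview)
Your proposal is correct and follows essentially the same route as the paper: the paper uses the specific flow $g_t = \diag(X^{nt},\dots,X^{nt},X^{-mt},\dots,X^{-mt})$, the one-sided Dani correspondence (Lemma~\ref{chofcoord}, which only gives the implication you need, not the ``precisely when'' you stated), and the explicit $\psi \leftrightarrow r$ change of variables of Lemma~\ref{coc} in place of your dyadic-block bookkeeping, then finishes by invoking Theorem~\ref{theorem-volumes} and Theorem~\ref{km1.8} exactly as you outline. The only cosmetic difference is that the paper handles the divergence conversion via Lemma~\ref{coc} (i.e., Lemma~8.3 of \cite{KleinMarg}) rather than an ad hoc dyadic argument, which sidesteps the integer-rounding subtlety you flag.
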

\noindent Theorem \ref{KG} follows from this using a Fubini type argument (cf. \cite{KleinMarg}, 8.7 and \cite{Dani}, 2.1) which we will not repeat. For $n \geq 2$ and $\bv = (v_1, \dots, v_n) \in k^n$, we set
$$ \prod(\bv) := \prod_{i = 1}^{n} |v_n|~\text{and}~\|\bv\| := \max_{i}|v_i|.$$
\noindent Call $\Lambda \in \Upsilon_r$ , $\psi$ \emph{multiplicatively approximable} if there exist infinitely many $\bv \in \Lambda$ such that 
\begin{equation}\label{mult-approx}
\prod(\bv) \leq \|\bv\| \psi(\|\bv\|).
\end{equation}

\noindent For $r > 0$, set $\psi_{r}(x) := 1/x(\log x)^{r}$. In \cite{Skriganov}, M. Skriganov investigated the size of the set of $\psi_r$-multiplicatively approximable lattices. In particular, he was interested in the difficult problem of determining the existence and precise value of the exponent $r$ which makes the property of being $\psi_r$ multiplicatively approximable, generic in the space of lattices. Problems of this type are of great interest in the geometry of numbers. Moreover, multiplicative results have historically proved more difficult than their non-multiplicative counterparts. In \cite{KleinMarg}, the following theorem was proved, answering Skriganov's question.

\begin{theorem}(\cite{KleinMarg} Theorem $1.11$)
Almost every (resp. almost no) unimodular lattice $\Lambda$ in $\bbR^n$ ($n \geq 2$) is $\psi$ multiplicatively approximable provided 
$$ \sum_{x = 1}^{\infty} (\log x)^{n - 2} \psi(x)$$
\noindent diverges (resp. converges).
\end{theorem}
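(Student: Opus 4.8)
This is Theorem~1.11 of \cite{KleinMarg}; I sketch the dynamical proof, which runs parallel to that of Theorem~\ref{km4.3} but on $\Upsilon_n:=\SL_n(\bbR)/\SL_n(\Z)$, the space of unimodular lattices in $\bbR^n$, using the tools of \S\ref{section-main} in their classical real form. Let $\Delta(\Lambda)=\log\sup_{\bv\in\Lambda\setminus\{0\}}1/\|\bv\|$, where $\|\cdot\|$ is the sup norm. By Siegel's integral formula, $\mu(\{\Lambda:\lambda_1(\Lambda)\le\epsilon\})\asymp\epsilon^n$ for small $\epsilon$ (here $\lambda_1$ is the length of a shortest nonzero vector), so $\Delta$ is $\DL$ with exponent $\kappa=n$, i.e.\ $\Phi_\Delta(t)\asymp e^{-nt}$. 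Let $D=\{a=\diag(e^{t_1},\dots,e^{t_n}):\sum_i t_i=0\}$, with closed positive Weyl chamber $D^+=\{t_1\ge\cdots\ge t_n\}$. The reduction step is: given $\bv\in\Lambda$ with $|v_i|\asymp e^{-\sigma_i}$ and $\bar\sigma=\tfrac1n\sum_i\sigma_i$, the element $a=\diag(e^{\sigma_i-\bar\sigma})\in D$ has $\|a\bv\|\asymp(\prod(\bv))^{1/n}=e^{-\bar\sigma}$, and after a Weyl permutation $a\in D^+$. Hence, up to fixed multiplicative constants absorbed into the thresholds, ``$\Lambda$ is $\psi$-multiplicatively approximable'' (condition (\ref{mult-approx})) becomes ``$\Delta(f_j\Lambda)\ge r_j$ for infinitely many $j$'', where $\scF=\{f_j\}\subset D^+$ is a fixed countable net and $n r_j\asymp-\log\!\big(\|\bv\|\,\psi(\|\bv\|)\big)$; making this equivalence precise, in both directions and with matching up to fixed constants, is the delicate part, discussed at the end. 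Organizing $\scF$ by the scale $x:=\|\bv\|$: at scale $x$ a marginal solution has $\min_i\sigma_i=-\log x$ and $\sum_i\sigma_i$ pinned by the product condition, leaving the remaining $n-2$ coordinates on an $(n-2)$-simplex of side $\asymp\log x$ (in the regime where the series of interest diverges; otherwise both sides below converge anyway). Thus $\scF$ should contain $\asymp(\log x)^{n-2}$ net points per scale, which is precisely the origin of the weight $(\log x)^{n-2}$.

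Granted the forward covering property (every genuine solution produces some $j$ with $\Delta(f_j\Lambda)\ge r_j$), the convergence half is the classical Borel--Cantelli lemma once one checks the bookkeeping $\sum_j\Phi_\Delta(r_j)\asymp\sum_{x=1}^\infty(\log x)^{n-2}\psi(x)$: by the $\DL$ property a net point at scale $x$ contributes $\Phi_\Delta(r_j)\asymp e^{-n r_j}\asymp x\psi(x)$, so $\sum_j\Phi_\Delta(r_j)\asymp\sum_{x\ \mathrm{dyadic}}(\log x)^{n-2}\,x\psi(x)\asymp\sum_x(\log x)^{n-2}\psi(x)$, the last comparison being the standard passage from a dyadic sum to the full sum for the eventually monotone function $(\log x)^{n-2}\psi(x)$ (this is where the non-increasing hypothesis on $\psi$ is used). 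So if the series converges, almost no $\Lambda$ is $\psi$-multiplicatively approximable.

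For the divergence half, the plan is to apply the quantitative converse Borel--Cantelli lemma, Proposition~\ref{BClemma}, in its $\SL_n(\bbR)$ form --- which rests on the classical effective decay of matrix coefficients for $\SL_n(\bbR)$ on $L^2_0(\Upsilon_n)$, the real counterpart of Theorem~\ref{main-mixing} --- to the twisted sequence $\scHF=\{f_j^{-1}h_j\}$, where $h_j$ is a smooth $L^2$ function comparable to the indicator of $\{\Delta\ge r_j\}$, so that $\int h_j\,\ud\mu\asymp\|h_j\|_2^2\asymp\Phi_\Delta(r_j)$. Exactly as in the displayed estimate in the proof of Theorem~\ref{km4.3}, the cross terms in the quasi-independence hypothesis (\ref{BC2}) are $\ll\sum_{i,j}\|h_i\|_2\|h_j\|_2\,\|f_i f_j^{-1}\|^{-1/b}\ll\big(\sup_i\sum_j\|f_i f_j^{-1}\|^{-1/b}\big)E_N$, and the supremum is finite because $\scF$ is exponentially divergent (condition (\ref{ED})), which in turn holds because the net points do not accumulate and their number at each chamber ``distance level'' grows only polynomially. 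Proposition~\ref{BClemma} then gives $\#\{j\le N:\Delta(f_j\Lambda)\ge r_j\}\sim\sum_{j\le N}\Phi_\Delta(r_j)\to\infty$ for a.e.\ $\Lambda$, hence infinitely many solutions; combined with the convergence half this gives the dichotomy.

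The main obstacle is the construction of the net $\scF$ and the depths $r_j$ so that all three requirements hold at once: (i) $\scF$ is exponentially divergent, needed for the cross-term bound above; (ii) every $\bv$ realizing a near-solution is detected by some $f_j$, so that the Diophantine set lies, modulo a null set, inside the set of $\Lambda$ for which $\Delta(f_j\Lambda)\ge r_j$ for infinitely many $j$ --- needed, via the contrapositive, for the convergence direction; and (iii) conversely each event $\Delta(f_j\Lambda)\ge r_j$ arises from a genuine $\bv\in\Lambda$ satisfying (\ref{mult-approx}), so that the dynamical event lies, modulo a null set, inside the Diophantine set --- needed for the divergence direction. The tension is between (i) and (ii): the cusp-excursion depths forced by $\psi$ must be laid down finely along $D^+$, yet the group elements must recede exponentially in $\|\cdot\|$. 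This is resolved by interpolating between the discrete net parameters and the continuous data $(\sigma_1,\dots,\sigma_n)$ of an actual solution and by exploiting the monotonicity of $\psi$; once the net counting $\asymp(\log x)^{n-2}$ and the $\DL$ exponent $n$ are in place, matching $\sum_j\Phi_\Delta(r_j)$ with $\sum_x(\log x)^{n-2}\psi(x)$ is the same bookkeeping as in the convergence half.
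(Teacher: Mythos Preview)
This statement is not proved in the paper: it is quoted verbatim as Theorem~1.11 of \cite{KleinMarg}, serving only as motivation for the paper's own positive-characteristic analogue, Theorem~\ref{Skriganov}. So there is no ``paper's own proof'' to compare your attempt against.

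That said, your sketch is a fair outline of the Kleinbock--Margulis argument, and it parallels how the paper handles Theorem~\ref{Skriganov}. The paper's treatment of its analogue is considerably terser than yours: it invokes the dynamical reformulation (the unnamed lemma just before the proof of Theorem~\ref{Skriganov}), decomposes the diagonal lattice $\mathfrak{d}$ into Weyl chambers $\mathfrak{d}_j$ on each of which $\|\bt\|_{-}$ is a genuine norm, observes that $\{g_{\bt}:\bt\in\mathfrak{d}_j\}$ is $\ED$, applies Theorem~\ref{km1.8}, and then defers \emph{all} of the counting that produces the $(\log x)^{n-2}$ weight to Corollary~2.4 and Lemma~2.8 of \cite{KleinMarg}. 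Your sketch instead tries to explain where that weight comes from (the $(n-2)$-simplex of free $\sigma_i$ coordinates at each scale), which is the right heuristic. Note also that the paper proves only the divergence half of its analogue; you attempt both halves.

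Two small cautions on your write-up. First, the ``eventually monotone'' passage from the dyadic sum to the full sum needs care: $(\log x)^{n-2}\psi(x)$ need not be monotone even when $\psi$ is, so one should split off the $(\log x)^{n-2}$ factor and use monotonicity of $\psi$ alone, or argue via the Cauchy condensation comparison on $\psi$ directly. Second, your requirements (ii) and (iii) on the net are exactly the content of \cite{KleinMarg}, Theorem~9.2 (only half of which the present paper states), and making them precise is where most of the work lies; your sketch correctly flags this but does not carry it out.
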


\noindent We will prove 

\begin{theorem}\label{Skriganov}
Almost every lattice $\Lambda$ in $k^n$ is $\psi$ multiplicatively approximable provided 
$$ \sum_{x = 1}^{\infty} (\log x)^{n - 2} \psi(x)$$
\noindent diverges.
\end{theorem}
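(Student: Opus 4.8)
The strategy is the standard Kleinbock--Margulis dynamical reduction: translate the multiplicative approximation condition into a shrinking-target statement for a suitable one-parameter (here one-parameter means $\Z$-indexed) diagonal sequence acting on $\Upsilon_n = \SL_n(k)/\SL_n(Z)$, verify that the associated target function is $\DL$, check that the relevant diagonal sequence is $\ED$, and then invoke Theorem~\ref{km1.8} (or the quantitative Theorem~\ref{km4.3}). Concretely, for a lattice $\Lambda \in \Upsilon_n$ the condition $\prod(\bv) \leq \|\bv\|\psi(\|\bv\|)$ for infinitely many $\bv \in \Lambda$ should be reformulated: after scaling $\bv$ by a diagonal matrix $g_\vt = \diag(X^{t_1},\dots,X^{t_n})$ with $\sum t_i = 0$, a vector that is multiplicatively $\psi$-small becomes genuinely short (all coordinates of absolute value $\ll 1$) in $g_\vt \Lambda$, provided $\vt$ is chosen according to the coordinate sizes of $\bv$. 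This is exactly the positive-characteristic analogue of \cite{KleinMarg}~\S 9; the extra logarithmic factor $(\log x)^{n-2}$ in the divergence sum reflects the number of lattice points of a given norm contributing to the product, i.e. the volume of the relevant slab in the diagonal parameter space, just as $l^{r-1}$ appeared in Theorem~\ref{volume-at-cusps}.

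\textbf{Key steps, in order.} First I would set up the dictionary: define, for each $n$-tuple $\vt = (t_1,\dots,t_n) \in \Z^n$ with $\sum t_i = 0$, the diagonal element $g_\vt \in \SL_n(k)$, and observe that $\Lambda$ is $\psi$-multiplicatively approximable iff for infinitely many such $\vt$ (ordered suitably by $\|\vt\| \to \infty$) the lattice $g_\vt \Lambda$ contains a nonzero vector of norm below a threshold determined by $\psi$ and $\|\vt\|$; equivalently $\Delta(g_\vt \Lambda) \geq s^{r(\vt)}$ for the function $\Delta$ of \eqref{delta-def} and an explicit integer $r(\vt)$. Second, I would enumerate the admissible $\vt$'s as a sequence $\{f_n\} = \{g_{\vt_n}\}$ in $G = \SL_n(k)$, grouping together all $\vt$ with a common value of $\max_i t_i - \min_i t_i$ (these are the ``shells''), and verify the $\ED$ condition \eqref{ED}: because $\|g_\vt \fmi\| = \|g_{\vt - \vt'}\|$ is comparable to $s^{\,c\|\vt - \vt'\|}$ for distinct diagonal tuples, the sum $\sum_m \|f_m f_n^{-1}\|^{-\beta}$ is a geometric-type sum in the shell index, uniformly convergent in $n$, so $\{f_n\}$ is $\ED$; this is where the ultrametric is genuinely convenient, since $\|g_\vt\|$ is exactly a power of $s$. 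Third, Theorem~\ref{theorem-volumes} gives that $\Delta$ is $\DL$, say $\kappa$-$\DL$ with $\kappa = n-1$ (the $k$-rank of $\SL_n$), hence $\Phi_\Delta(r(\vt)) \asymp s^{-\kappa r(\vt)}$. Fourth, I would compute $\sum_n \Phi_\Delta(r(\vt_n))$: summing $s^{-\kappa r(\vt)}$ over all $\vt$ in a shell of ``width'' $w$ produces, after the change of variables from lattice-point counting to diagonal-parameter counting, a main term proportional to $(\log x)^{n-2}\psi(x)$ summed over $x \asymp s^{w}$ (the exponent $n-2$ being the dimension of the face of the parameter simplex that is free once the width is fixed). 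Therefore $\sum_n \Phi_\Delta(r(\vt_n)) = \infty$ precisely when $\sum_x (\log x)^{n-2}\psi(x)$ diverges. Finally, applying the divergence half of Theorem~\ref{km1.8} to the $\ED$ sequence $\{f_n\}$ and the $\DL$ function $\Delta$ yields that for a.e.\ $\Lambda$ one has $\Delta(f_n \Lambda) \geq s^{r(\vt_n)}$ for infinitely many $n$, which by the dictionary is exactly $\psi$-multiplicative approximability of a.e.\ $\Lambda$; a Borel--Cantelli (convergence) argument, not needed for the stated theorem, would give the complementary zero-measure statement.

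\textbf{Main obstacle.} The delicate point is the bookkeeping in Step~4: the passage from the count of lattice vectors $\bv$ with prescribed coordinate magnitudes to a count of admissible diagonal tuples $\vt$, and the verification that the resulting weighted sum has main term $\asymp \sum_x (\log x)^{n-2}\psi(x)$ rather than something off by a power of $\log x$. One has to choose the ``rounding'' of a real vector of target coordinate sizes to an integer tuple $\vt \in \Z^n$ with $\sum t_i = 0$ carefully enough that (i) no admissibility is lost (every genuinely small vector is captured by some $\vt$ in the enumeration) and (ii) the same vector is not over-counted by more than a bounded factor across nearby $\vt$'s. In \cite{KleinMarg} this is handled by a careful partition of the cone of diagonal parameters; the positive-characteristic version is cleaner because the valuation is integer-valued, but one still must check that the number of cocharacter-like tuples with fixed $\langle\rho,\cdot\rangle$-value grows like $l^{n-2}$ along the constrained face — an estimate entirely parallel to the $l^{r-1}$ count used in the proof of Theorem~\ref{volume-at-cusps}, and I would cite that lattice-point count rather than redo it. The remaining ingredients (the $\ED$ verification, the $\DL$ property, the dictionary) are routine once the translation is fixed, so I expect to state them briefly and concentrate the exposition on the counting.
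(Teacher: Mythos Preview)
Your proposal is correct and follows essentially the same route as the paper: the dynamical dictionary via the diagonal elements $g_{\bt}$ with $\sum t_i=0$, the $\DL$ property of $\Delta$ from Theorem~\ref{theorem-volumes}, verification of $\ED$ for the diagonal sequence, and an appeal to Theorem~\ref{km1.8}. The only organisational difference is that the paper decomposes the parameter space $\mathfrak d$ into Weyl chambers (on each of which $\|\bt\|_{-}$ is a genuine norm) and then cites Corollary~2.4 and Lemma~2.8 of \cite{KleinMarg} for the counting that produces the $(\log x)^{n-2}$ factor, whereas you intend to organise by shells and carry out that lattice-point count directly; both lead to the same estimate, and your direct count is exactly the $l^{n-2}$ growth you correctly identify as parallel to the $l^{r-1}$ count in Theorem~\ref{volume-at-cusps}.
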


\subsection{Proof of Theorems \ref{KG} and \ref{Skriganov}}

The proofs of Theorems \ref{KG-gen} and \ref{Skriganov} depend on a dynamical reformulation of Diophantine properties going back to Dani \cite{Dani} and generalized by Kleinbock-Margulis. We recall first Lemma $8.3$ from \cite{KleinMarg}.

\begin{lemma}\label{coc}
Fix $m, n \in \N$ and $x_{0} > 0$, and let $\psi : [x_{0},\infty) \to (0,\infty)$ be a non-increasing continuous function. Then there exists a unique continuous function $r : [a_{0},\infty) \to \bbR$ where $a_{0} = \frac{m}{m+n}\log x_{0} - \frac{n}{m+n}\log(\psi(x_{0}))$ such that

\begin{equation}\label{coc1}
\lambda(a) = a - nr(a)~\text{is strictly increasing and tends to}~\infty~\text{as}~a \to \infty,
\end{equation}

\begin{equation}\label{coc2}
L(a) = a + m r(a)~\text{is non decreasing}
\end{equation}

\noindent and

\begin{equation}\label{coc3}
\psi(s^{a - n r(a)}) = s^{-a-m r(a)}~\text{for all}~a \geq a_{0}.
\end{equation}

\noindent Conversely, given $a_{0} \in \bbR$ and a continuous function $r : [a_{0}, \infty) \to \bbR$ such that (\ref{coc1}, \ref{coc2}) hold, there exists a unique continuous non-increasing function 
$\psi : [x_{0},\infty) \to (0,\infty)$ with $x_{0} = s^{a_{0} - n r(a_{0})}$ satisfying (\ref{coc3}). Further, for a positive integer $q$,

\begin{equation}\label{coc4}
\int_{x_{0}}^{\infty}(\log x)^{q}\psi(x)dx < \infty \iff \int_{a_{0}}^{\infty}a^{q}s^{-(m+n)r(a)}da < \infty.
\end{equation}
\end{lemma}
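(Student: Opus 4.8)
The plan is to prove Lemma \ref{coc} by directly constructing $r$ from $\psi$ (and vice versa) via an implicit-function-type argument, exploiting the strict monotonicity hypotheses to invert the relevant maps. First I would set up the two substitutions dictated by (\ref{coc1}) and (\ref{coc3}): write $x = s^{\lambda(a)} = s^{a - n r(a)}$ for the ``$q$-variable side'' and $\psi(x) = s^{-L(a)} = s^{-a - m r(a)}$ for the ``$p$-variable side''. Solving these two equations simultaneously for $a$ and $r$ in terms of $x$ and $\psi(x)$ is a linear algebra exercise: from $\log_s x = a - nr$ and $-\log_s \psi(x) = a + mr$ one gets $a = \frac{m \log_s x - n \log_s \psi(x)}{m+n}$ and $r = \frac{-\log_s x - \log_s \psi(x)}{m+n}$. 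Evaluating the first of these at $x = x_0$ gives exactly the stated value of $a_0$, which pins down the left endpoint of the domain. This shows that if $r$ exists it is unique and given by this formula; conversely, composing with $\psi$ reconstructs $\psi$ from $r$, giving the stated $x_0 = s^{a_0 - n r(a_0)}$.

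Next I would check the monotonicity and surjectivity claims. The map $a \mapsto \lambda(a) = a - nr(a)$ must be shown to be strictly increasing with range $[a_0, \infty) \to [\log_s x_0, \infty)$ — surjectivity onto a half-line is what guarantees that the substitution $x \leftrightarrow a$ is a genuine continuous bijection, so that $r$ as a function of $a$ is well defined and continuous on all of $[a_0,\infty)$. Strict monotonicity of $\lambda$ is forced by the requirement (\ref{coc1}) itself in the forward direction; in the converse direction it is a hypothesis. That $\lambda(a) \to \infty$ handles the range. For (\ref{coc2}), non-decrease of $L(a) = a + m r(a)$ translates, under the change of variables, into non-increase of $\psi$: since $\psi(s^{\lambda(a)}) = s^{-L(a)}$ and $\lambda$ is increasing while $x \mapsto s^{\lambda^{-1}(\log_s x)}$ is increasing, $\psi$ is non-increasing iff $L$ is non-decreasing, and continuity of $\psi$ follows from continuity of $L$, $r$, and $\lambda^{-1}$. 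Running this equivalence backwards gives the converse direction of the lemma.

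Finally, for the integral criterion (\ref{coc4}), the plan is a straightforward change of variables $x = s^{\lambda(a)}$, so $dx = (\ln s)\, s^{\lambda(a)} \lambda'(a)\, da$ whenever $\lambda$ is differentiable; to avoid smoothness assumptions one works with the Stieltjes measure $d(s^{\lambda(a)})$ instead. Under this substitution $(\log_s x)^q = \lambda(a)^q$ and $\psi(x) = s^{-L(a)} = s^{-a - m r(a)}$, and $x\,\psi(x)\,d(\log x)$-type bookkeeping turns $(\log x)^q \psi(x)\, dx$ into a quantity comparable to $\lambda(a)^q s^{-(m+n) r(a)}\, da$ after absorbing the factor $s^{\lambda(a)} = s^{a - nr(a)}$ against $\psi(x) = s^{-a-mr(a)}$, which produces exactly the exponent $-(m+n)r(a)$. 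One must also check that $\lambda(a)^q \asymp a^q$ for large $a$, which follows since $\lambda(a) = L(a) - (m+n) r(a)$ with $L$ non-decreasing and $\lambda(a) \leq a$, giving $\lambda(a) \to \infty$ at a rate comparable to $a$ up to the bounded-below contributions; more carefully, $a - \lambda(a) = nr(a)$ and $L(a) - a = mr(a)$ force $r$ to grow sublinearly, so $\lambda(a)/a \to 1$. The main obstacle I anticipate is precisely this last comparison $\lambda(a) \asymp a$ together with handling the convergence/divergence equivalence at the level of improper integrals without assuming differentiability of $\psi$ or $r$ — one needs to argue via monotone substitution (or by sandwiching $r$ between step functions) rather than a naive calculus change of variables, and to be careful that the finite lower endpoints $x_0$, $a_0$ contribute only bounded amounts so the equivalence is genuinely about the tails.
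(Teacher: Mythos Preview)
The paper does not prove this lemma at all: it is simply quoted as Lemma~8.3 of \cite{KleinMarg} and used as a black box, so there is no in-paper argument to compare against. What you have written is an attempted reconstruction of the Kleinbock--Margulis proof.

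Your treatment of the correspondence itself is correct and is exactly the standard one. Solving the linear system $\log_s x=a-nr$, $-\log_s\psi(x)=a+mr$ gives the explicit formulas you wrote; the map $x\mapsto a(x)=\tfrac{m\log_s x-n\log_s\psi(x)}{m+n}$ is continuous, strictly increasing (because $\log_s x$ is and $-\log_s\psi$ is non-decreasing), and tends to $\infty$, hence is a bijection $[x_0,\infty)\to[a_0,\infty)$. Inverting gives $r$ as a continuous function of $a$, and (\ref{coc1})--(\ref{coc3}) as well as the converse follow exactly as you describe.

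The gap is in your argument for (\ref{coc4}). You reduce the equivalence to the comparison $\lambda(a)\asymp a$ and justify this by asserting that ``$a-\lambda(a)=nr(a)$ and $L(a)-a=mr(a)$ force $r$ to grow sublinearly, so $\lambda(a)/a\to 1$''. This is false. Take $\psi(x)=s^{-x}$: then $L(\lambda)=s^{\lambda}$, so $(m+n)a=m\lambda+n\,s^{\lambda}$, and one finds $\lambda(a)\sim\log_s a$ and $r(a)\sim a/n$ as $a\to\infty$. Thus $r$ grows \emph{linearly} and $\lambda(a)/a\to 0$, not $1$. (Your side claim $\lambda(a)\le a$, i.e.\ $r(a)\ge 0$, is also unjustified: it is equivalent to $\psi(x)\le 1/x$, which is nowhere assumed.) The conclusion of the lemma still holds in this example --- both integrals converge for every $q$ --- but not for the reason you give.

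What is actually needed is an argument that tolerates $\lambda(a)\ll a$. Writing $\phi=s^{\lambda-L}=s^{-(m+n)r}$ and using $(m+n)\,da=m\,d\lambda+n\,dL$, the easy direction $\int\lambda^q\phi\,d\lambda\ll\int a^q\phi\,da$ follows as you indicate from $\lambda\ll a$ and $d\lambda\ll da$. For the reverse direction one cannot compare $a^q$ with $\lambda^q$ pointwise; instead one integrates by parts. For example, from $d(s^{-L})=-(\ln s)\,s^{-L}\,dL$ one gets
\[
\int_{\lambda_0}^{B}\lambda^{q}\phi\,dL
=\int_{\lambda_0}^{B}\lambda^{q}\phi\,d\lambda
+\tfrac{q}{\ln s}\int_{\lambda_0}^{B}\lambda^{q-1}\phi\,d\lambda
+\tfrac{1}{\ln s}\bigl(\lambda_0^{q}\phi(\lambda_0)-B^{q}\phi(B)\bigr),
\]
and the hypothesis $\int\lambda^q\phi\,d\lambda<\infty$ together with monotonicity of $s^{-L}$ forces $B^q\phi(B)\to 0$. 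Similar manipulations handle the terms involving $L^q$. The underlying reason the equivalence holds is that the exponential weight $s^{-(m+n)r}$ absorbs any polynomial discrepancy between $\lambda$ and $a$; this is what your sketch is missing.
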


\noindent Given $t \in \Z$, we define the diagonal matrix $g_t \in \SL_{m + n}(k)$ as follows

\begin{equation}\label{defg1}
g_{t} := \diag(\underbrace{X^{nt}, X^{nt},\dots, X^{nt}}_\text{m terms}, \underbrace{X^{-mt}, X^{-mt}, \dots, X^{-mt}}_\text{n terms})
\end{equation}

\noindent The following is one half of Theorem $8.5$ from \cite{KleinMarg}. We reproduce the argument. It should be noted that the other half of the lemma is used for the convergence case of Khintchine's theorem, which we are not addressing since it can be proved by elementary means. It should be possible to prove an analogue of this as well, but is not as immediate and needs to be phrased more precisely. See \cite{Ghosh} Theorem 2.5 for an example. 

\begin{lemma}\label{chofcoord}
Suppose there exist infinitely many $t \in \mathbb{Z}_{+}$ such that 
\begin{equation}\label{chofcoord1}
\Delta(g_{t}\Lambda) \geq r(t).
\end{equation}
\noindent Then $\Lambda$ is $(\psi, n)$-approximable. 
\end{lemma}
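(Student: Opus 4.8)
The plan is to unwind the definitions and translate the dynamical condition $\Delta(g_t\Lambda)\ge r(t)$ into the Diophantine inequality \eqref{latt-approx}. Recall from \eqref{delta-def} that $\Delta(g_t\Lambda)\ge r(t)$ means there is a nonzero vector in the lattice $g_t\Lambda$ of norm at most $s^{-r(t)}$; equivalently, writing such a vector as $g_t\bv$ with $\bv\in\Lambda\setminus\{0\}$, we have $\|g_t\bv\|\le s^{-r(t)}$. By the definition \eqref{defg1} of $g_t$, the first $m$ coordinates of $g_t\bv$ are $X^{nt}\bv^{(m)}$ and the last $n$ coordinates are $X^{-mt}\bv_{(n)}$, so the single inequality $\|g_t\bv\|\le s^{-r(t)}$ splits into the pair
\begin{equation}\label{proof-pair}
\|\bv^{(m)}\| \le s^{-nt}s^{-r(t)}, \qquad \|\bv_{(n)}\| \le s^{mt}s^{-r(t)},
\end{equation}
using that $|X^{nt}| = s^{nt}$ and $|X^{-mt}| = s^{-mt}$ in the valuation normalization fixed in the introduction.

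Next I would feed \eqref{proof-pair} into the cocycle relation \eqref{coc3} of Lemma \ref{coc}. Set $a = t$ (or more carefully, use that $r$ is defined on $[a_0,\infty)$ and restrict to integers $t \ge a_0$, which still leaves infinitely many $t$ by hypothesis). From the first inequality in \eqref{proof-pair}, $\|\bv^{(m)}\|^m \le s^{-m(nt + r(t))} = s^{-m(t + m r(t))/1}$— I should be slightly careful here: the exponents in \eqref{coc3} are $\lambda(a) = a - n r(a)$ and $L(a) = a + m r(a)$, so I want to match $\|\bv_{(n)}\|$ with $s^{\lambda(t)} = s^{t - n r(t)}$ and $\|\bv^{(m)}\|^m$ with $s^{-L(t)} = s^{-t - m r(t)}$. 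Comparing with \eqref{proof-pair}: from the second inequality $\|\bv_{(n)}\|\le s^{mt - r(t)}$, and from the first $\|\bv^{(m)}\|\le s^{-nt-r(t)}$, hence $\|\bv^{(m)}\|^m \le s^{-m(nt+r(t))}$. Since $\psi$ is non-increasing, to conclude \eqref{latt-approx}, namely $\|\bv^{(m)}\|^m \le \psi(\|\bv_{(n)}\|^n)$, it suffices to check $s^{-m(nt+r(t))} \le \psi\!\left(s^{n(mt - r(t))}\right)$, and this follows from \eqref{coc3} together with the monotonicity of $\psi$ and the fact that the exponents $nt+r(t)$ and $mt - r(t)$ dominate (respectively are dominated by) the precise exponents $t+mr(t)$ and $t - nr(t)$ appearing in \eqref{coc3} — a short check using $nt \ge t$, $mt \ge t$ when $m,n\ge1$ may need massaging, so in practice one rescales $t$ so that the diagonal flow parameter matches $a$ exactly rather than off by the factors $m,n$; this bookkeeping is the only delicate point.

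Finally, I must verify the \emph{arbitrarily large $\|\bv_{(n)}\|$} clause in the definition of $(\psi,n)$-approximability. This is where I would use \eqref{coc1}: the function $\lambda(a) = a - n r(a)$ is strictly increasing and tends to $\infty$, so as $t\to\infty$ through the infinitely many values satisfying \eqref{chofcoord1}, the lower exponent governing $\|\bv_{(n)}\|$ also tends to $\infty$; combined with the standard fact that a nonzero lattice vector achieving $\Delta$ cannot have $\|\bv_{(n)}\|$ too small (otherwise all coordinates are small and the lattice fails to be unimodular — a Minkowski/Mahler compactness type observation, or simply that $\|\bv\|\ge$ a constant depending on $\Lambda$), one gets $\|\bv_{(n)}\|\to\infty$ along this subsequence, so infinitely many distinct $\bv$ occur. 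The main obstacle is purely this exponent-matching bookkeeping between the normalization of $g_t$ in \eqref{defg1} and the normalization of $\lambda, L, r$ in Lemma \ref{coc}; once the flow is parametrized consistently, the argument is a direct unpacking of \eqref{coc3}, exactly as in \cite{KleinMarg} Theorem~8.5, with $s$-adic absolute values replacing Euclidean ones.
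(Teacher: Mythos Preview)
Your approach is essentially the same as the paper's: unwind $\Delta(g_t\Lambda)\ge r(t)$ into the pair of inequalities \eqref{proof-pair} on $\|\bv^{(m)}\|$ and $\|\bv_{(n)}\|$, then feed these into the relation \eqref{coc3} together with the monotonicity of $\psi$. You also correctly flag the exponent-normalization mismatch between \eqref{defg1} and Lemma~\ref{coc} as the only delicate bookkeeping point; the paper's own one-line chain of inequalities has the same issue (indeed what is printed as $s^{-n-r(t)}$ there should be $s^{-t-mr(t)}$, matching \eqref{coc3}).

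The one place where your argument diverges from the paper, and where it becomes shaky, is the ``arbitrarily large $\|\bv_{(n)}\|$'' clause. You invoke \eqref{coc1} to say that ``the lower exponent governing $\|\bv_{(n)}\|$ tends to $\infty$'', but \eqref{proof-pair} only gives an \emph{upper} bound $\|\bv_{(n)}\|\le s^{mt-r(t)}$; \eqref{coc1} controls that upper bound, not a lower one. Your Mahler-type remark that $\|\bv\|\ge c(\Lambda)$ then only yields $\|\bv_{(n)}\|\ge c$ eventually, not $\|\bv_{(n)}\|\to\infty$. The paper handles this more cleanly by first disposing of the degenerate case: if $\bv^{(m)}=0$ for some nonzero $\bv\in\Lambda$, then integral multiples of $\bv$ already witness $(\psi,n)$-approximability for any $\psi$. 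Once every $\bv^{(m)}\ne 0$, the first inequality in \eqref{proof-pair} forces $\|\bv^{(m)}\|\to 0$ along the sequence (using that $L(t)=t+mr(t)$ is non-decreasing from \eqref{coc2}), hence infinitely many distinct $\bv$ arise, and discreteness of $\Lambda$ in $k^{m+n}$ then forces $\|\bv_{(n)}\|$ to be unbounded. That two-case split is the missing ingredient in your write-up.
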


\begin{proof}

\noindent We first dispense with the case when
\begin{equation}\label{zero}
\bv^{(m)} = 0~\text{for some}~\bv.
\end{equation}

\noindent Then integral multiples of $\bv$ produce infinitely many solutions of (\ref{latt-approx}) and lattices with (\ref{zero}) are $(\psi, n)$-approximable for any $\psi$. We may therefore assume that (\ref{chofcoord1}) holds for arbitrarily large $t \in \Z_{+}$ so that
$$ \|\bv^{(m)}\|^{m} \leq  s^{-n - r(t)} = \psi(s^{t - nr(t)}) \leq \psi(\|\bv_{(n)}\|^n)$$
\noindent which shows the desired result.
\end{proof}

\noindent In order to prove Theorem \ref{KG-gen}, it remains to show that $\mathcal{B}(\Delta)$ is Borel-Cantelli for $g_1$. This follows from Theorem \ref{km1.8} and Theorem \ref{theorem-volumes}.

Similarly we can dynamically interpret the multiplicative problem in the following manner. For $\bt$ in the set
$$\mathfrak{d} := \{\bt = (t_1,\dots, t_r) \in \Z^{r}~:~\sum_{i = 1}^{r}t_i = 0\},$$

\noindent we set $\|\bt\|_{-} := \max\{|t_i|~:~t_i \leq 0\}$ and
\begin{equation}
g_{\bt} := \diag(X^{t_1}, \dots, X^{t_r}).
\end{equation}

\noindent The following is one half of Theorem 9.2 in \cite{KleinMarg}. The proof is identical to loc.cit. and very similar to the argument for Lemma \ref{chofcoord} so we do not reproduce it.

\begin{lemma}
Suppose there exist infinitely many $\bt \in \mathfrak{d}$ such that
$$ \Delta(\Lambda) \geq r(\|\bt\|_{-}) $$
\noindent then $\Lambda$ is $\psi$ multiplicatively approximable.
\end{lemma}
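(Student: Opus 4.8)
The plan is to imitate, nearly verbatim, the proof of Lemma~\ref{chofcoord}, with the one--parameter group $\{g_{t}\}$ replaced by the family $\{g_{\bt}\}_{\bt\in\mathfrak{d}}$ and the time $t$ replaced by the exponent $\|\bt\|_{-}$. Throughout, $r$ is the function attached to $\psi$ by Lemma~\ref{coc}, taken with the parameters suited to the multiplicative setting, so that (as in \cite{KleinMarg} \S 9) the Borel--Cantelli series $\sum_{\bt}\Phi_{\Delta}(r(\|\bt\|_{-}))$ matches $\sum_{x}(\log x)^{d-2}\psi(x)$, $d$ being the ambient dimension ($\Lambda\subset k^{d}$, $\bt\in\Z^{d}$). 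First I would dispose of the degenerate lattices: if some nonzero $\bv\in\Lambda$ has a vanishing coordinate then $\prod(f\bv)=0\le\|f\bv\|\,\psi(\|f\bv\|)$ for every $f\in Z$, and the multiples $f\bv$ with $\deg f\to\infty$ are infinitely many vectors witnessing $\psi$--multiplicative approximability; so we may assume $\prod(\bv)\ne 0$ for all $\bv\in\Lambda\setminus\{0\}$.

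Next I would fix $\bt\in\mathfrak{d}$ with $\Delta(g_{\bt}\Lambda)\ge r(\|\bt\|_{-})$, set $a:=\|\bt\|_{-}$, and use the definition of $\Delta$ to choose $\bv\in\Lambda\setminus\{0\}$ with $\|g_{\bt}\bv\|\le s^{-r(a)}$; equivalently $|v_{i}|\le s^{-t_{i}-r(a)}$ for each $i$. Two elementary consequences of $\bt\in\mathfrak{d}$ then carry the estimate. Since $\sum_{i}t_{i}=0$,
\begin{equation*}
  \prod(\bv)=\prod_{i}|v_{i}|\le s^{-\sum_{i}t_{i}-d\,r(a)}=s^{-d\,r(a)},
\end{equation*}
while, because $t_{i}\ge -a$ for every $i$ (which holds since $\|\bt\|_{-}=a$),
\begin{equation*}
  \|\bv\|=\max_{i}|v_{i}|\le s^{\,a-r(a)}.
\end{equation*}
It then remains to insert the second bound into the non--increasing $\psi$ and combine with the first through the defining identity (\ref{coc3}) and the monotonicity of $\lambda(a)=a-nr(a)$, $L(a)=a+mr(a)$ from (\ref{coc1})--(\ref{coc2}); this delivers $s^{-d\,r(a)}\le\|\bv\|\,\psi(\|\bv\|)$, hence $\prod(\bv)\le\|\bv\|\,\psi(\|\bv\|)$. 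I expect this last manipulation to be the only genuinely delicate point: plugging the two one--variable bounds in directly produces a sign mismatch, and one must route the comparison through the monotone substitutions $\lambda,L$ (which in particular pin down how negative $r(a)$ can be)---this is precisely the bookkeeping carried out in \cite{KleinMarg} \S 9.

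Finally, letting $\bt$ run over the given infinite subset of $\mathfrak{d}$ (along which $\|\bt\|_{-}$ is necessarily unbounded, since a bound on $\|\bt\|_{-}$ would confine $\bt$, in view of $\sum_{i}t_{i}=0$, to a finite set), one obtains---by the routine argument of \cite{KleinMarg} \S 9, again using the reduction to non-degenerate lattices and the monotonicity from Lemma~\ref{coc}---infinitely many vectors $\bv$ with $\prod(\bv)\le\|\bv\|\,\psi(\|\bv\|)$. Hence $\Lambda$ is $\psi$--multiplicatively approximable.
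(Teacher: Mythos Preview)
Your sketch is correct and follows exactly the approach the paper intends---indeed the paper gives no proof at all, merely citing \cite{KleinMarg} Theorem~9.2 and the parallel with Lemma~\ref{chofcoord}, so your outline is already more detailed than what appears there. Your silent correction of the evident typo in the hypothesis (reading $\Delta(g_{\bt}\Lambda)\ge r(\|\bt\|_{-})$ in place of the stated $\Delta(\Lambda)\ge r(\|\bt\|_{-})$) is necessary and right.
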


\noindent\textbf{Proof of Theorem~\ref{Skriganov}:}

In order to prove this Theorem, we would like to use Theorem \ref{km1.8}. The function $\bt \to \|\bt\|_{-}$ is a norm when restricted to a Weyl chamber of $\mathfrak{d}$. We decompose $\mathfrak{d}$ as a union of such chambers $\mathfrak{d}_j$ and note that $\{g_{\bt}~:~\bt \in \mathfrak{d}_j\}$ is clearly an ED sequence and $\mathcal{B}(\Delta)$ is Borel-Cantelli for $g_{\bt}$ where $\bt$ runs through the intersection of $\mathfrak{d}_j$ with an arbitrary lattice in $\mathfrak{d}$. The result now follows from Corollary 2.4 and Lemma 2.8 in \cite{KleinMarg}.

\section{Geodesic excursions}\label{section-geodesic}
\subsection{Logarithm Laws}
 Let $X$ denote the Bruhat-Tits building of $G$ and let $Y := X/\Gamma$ be the corresponding finite-volume, non-compact quotient of $X$, and denote by $\pi: X \rightarrow Y$ the natural projection. Let $\partial X$ denote the geodesic boundary of $X$, which can be naturally identified with the spherical building of $G$. Given $x \in X$ and $\theta \in \partial X$, let $L = L_{x, \theta} = \{r_s(x, \theta)\}_{s \geq 0}$ denote geodesic ray based at $x$ in the direction $\theta$. Let $\{C_t(x, \theta)\}_{t \in \N}$ denote the sequence of chambers $L$ passes through. We are interested in the statistical behavior of the excursions away from compact sets in $Y$ of the projection $\pi(L)$. We will prove the analogue of Theorem $1.2$ from \cite{KleinMarg} in positive characteristic.
 We fix $y_0 \in Y$. Let $\mu$ be the natural $G$-invariant measure on $X$, normalized to be a probability measure on $Y$, which we also denote $\mu$. Define
\begin{equation}\label{defcusp}
A(r) := \{ y \in Y~:~d(y_0, y) \geq r\}
\end{equation}
\noindent and let
$$l(Y) := \lim_{r \to \infty} \frac{-\log(\mu(A(r)))}{r}.$$
\begin{theorem}\label{loglaw1}
Fix $y_0 \in Y$ and let $\{r_t ~:~t \in \mathbb{N}\}$ be an arbitrary sequence of integers. Then for any $x \in X$ and almost every (resp. almost no) \footnote{with respect to the natural measure class} $\theta \in \partial X$,
$$ d(\pi(C_t(x, \theta)), y_0) \geq r_t$$
provided the series
$$ \sum_{t = 1}^{\infty} s^{-l(Y)r_t}$$
\noindent diverges (resp. converges).
\end{theorem}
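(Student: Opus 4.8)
\textbf{Proof proposal for Theorem \ref{loglaw1}.}

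The plan is to reduce the statement about geodesic excursions in the building quotient $Y = X/\Gamma$ to the dynamical Borel-Cantelli statement already established, namely Theorem \ref{km1.8} applied to the function $\Delta = d(x_0,\cdot)$ on $\Upsilon = G/\Gamma$, which is $\DL$ by Theorem \ref{theorem-volumes}. First I would set up the correspondence between geodesic rays in $X$ and orbits of a one-parameter (here one-integer-parameter) semigroup on $G/\Gamma$: fixing a chamber $C_0$ containing (or adjacent to) $x$ and the direction $\theta \in \partial X$, the sequence of chambers $\{C_t(x,\theta)\}_{t\in\N}$ that the geodesic ray passes through is represented, after choosing a lift, by a sequence $a_t x$ where $a_t = a^t$ for a fixed element $a$ in the positive Weyl chamber $A^+$ of a maximal split torus (the ``translation along an apartment'' corresponding to $\theta$), and $x = g\Gamma \in G/\Gamma$ encodes the base point together with the germ of the direction. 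Under this identification $d(\pi(C_t(x,\theta)), y_0)$ becomes (up to a bounded additive error coming from the choice of lift and the bounded diameter of a chamber) the function $\Delta(a_t x) = d(x_0, a_t x)$, and the measure class on $\partial X$ in the direction of $\theta$ pushes forward, for a fixed $x$, to a measure absolutely continuous with respect to Haar measure $\mu$ on the fibers; combined with Fubini over $x$ this reduces ``almost every $\theta$'' to ``almost every point of $G/\Gamma$''.

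Next I would verify that the sequence $\scF = \{f_t = a^t : t \in \N\}$ is $\ED$, i.e. satisfies condition (\ref{ED}). Since $a \in A^+$ with all $|\chi(a)| \geq 1$ and $a$ nontrivial (genuinely expanding, as the geodesic goes to infinity in an apartment), we have $f_t f_m^{-1} = a^{t-m}$, and $\|a^{t-m}\|$ grows exponentially in $|t-m|$ because $\rho$ is a faithful representation and some weight is strictly expanded by $a$. Hence for every $\beta > 0$, $\sum_{t} \|a^{t-m}\|^{-\beta}$ is a convergent geometric-type series bounded uniformly in $m$, giving (\ref{ED}). Theorem \ref{km1.8} then tells us that $\mathcal{B}(\Delta) = \{\{x : \Delta(x) \geq s^n\} : n \in \Z\}$ is Borel-Cantelli for $\scF$: for the sequence of sets $A_n := \{x : \Delta(x) \geq s^{r_n}\}$ we get $\mu(\{x : \Delta(a^n x) \geq s^{r_n} \text{ i.o.}\})$ is $0$ or $1$ according as $\sum_n \mu(A_n) = \sum_n \Phi_\Delta(r_n)$ converges or diverges. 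Finally, by the definition of $l(Y)$ and the fact that $d(x_0,\cdot)$ is $\kappa$-$\DL$, we have $\Phi_\Delta(r) = \mu(A(r)) \asymp s^{-l(Y) r}$ (with $l(Y) = \kappa$), so $\sum_n \Phi_\Delta(r_n)$ converges if and only if $\sum_t s^{-l(Y) r_t}$ does; translating back through the correspondence and absorbing the bounded discretization error into a harmless shift $r_t \mapsto r_t \pm O(1)$ (which does not affect convergence/divergence of the series since $\Phi_\Delta$ only changes by a bounded multiplicative constant) yields the theorem.

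The main obstacle I expect is the first step: making the dictionary between geodesic rays in the Bruhat-Tits building and torus orbits on $G/\Gamma$ precise enough that (a) the metric distortion between $d(\pi(C_t(x,\theta)),y_0)$ and $\Delta(a^t x)$ is genuinely bounded uniformly in $t$, $x$ and $\theta$, and (b) the natural measure class on $\partial X$ really does disintegrate, over the base point $x$, into something in the Haar measure class so that the zero-one law on $G/\Gamma$ transfers to a zero-one statement for almost every $\theta$. Handling directions $\theta$ that are singular (lying on a wall of a chamber at infinity rather than in the interior) requires a little care, but these form a null set for the measure class on $\partial X$ and the generic directions correspond to regular elements $a \in A^+$, for which the expansion estimate needed for $\ED$ is cleanest. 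Everything else — the $\ED$ verification, the application of Theorem \ref{km1.8}, and the identification of $l(Y)$ with the $\DL$-exponent $\kappa$ via Theorem \ref{theorem-volumes} — is routine.
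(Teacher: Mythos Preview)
Your proposal is correct and follows essentially the same route as the paper: reduce to Theorem~\ref{km1.8} for the $\DL$ function $d(x_0,\cdot)$ and an $\ED$ sequence of torus elements, then identify $l(Y)$ with the $\DL$ exponent $\kappa$ via Theorem~\ref{theorem-volumes}. The one point worth noting is that the paper handles precisely the ``dictionary'' obstacle you flag by invoking Mozes' framework: it introduces the space $\Omega$ of admissible maps $\A \to Y$, establishes the isomorphism $(T\backslash G/\Gamma, A) \cong (\Omega, A)$ with Haar-induced measure, and proves the Borel--Cantelli statement first on $\Omega$ (Theorem~\ref{chambers}) before transferring to $\partial X$ via strong transitivity and $\Gamma$-invariance; this packages your steps (a) and (b) cleanly, whereas your direct description would require reproving those identifications by hand.
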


\noindent From this the positive characteristic analogue of the \emph{logarithm law for geodesics} follows. 

\begin{corollary}\label{loglawp} For any $x \in X$, $y_0 \in Y$ and almost all (with respect to the natural measure class) $\theta \in \partial X$, $$\limsup_{t \rightarrow \infty} \frac{ \log d(\pi(C_t(x, \theta)), y_0)}{ \log t} = \frac{1}{l(Y)}.$$ 
\end{corollary}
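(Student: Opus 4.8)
\textbf{Proof strategy for Corollary~\ref{loglawp}.} The plan is to derive the logarithm law from Theorem~\ref{loglaw1} by the standard Borel--Cantelli argument, exactly as in the proof of the analogous statement in \cite{KleinMarg}. The key point is that Theorem~\ref{loglaw1} is a zero-one law: for \emph{any} integer sequence $\{r_t\}$, the event $d(\pi(C_t(x,\theta)), y_0) \geq r_t$ holds for infinitely many $t$, for a.e.\ $\theta$, precisely when $\sum_t s^{-l(Y) r_t}$ diverges, and for a.e.\ no $\theta$ when it converges. One then tests against the family of sequences $r_t = \lfloor (1/l(Y))(1+\varepsilon)\log_s t \rfloor$ and $r_t = \lfloor (1/l(Y))(1-\varepsilon)\log_s t \rfloor$ for rational $\varepsilon > 0$.

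First I would fix a rational $\varepsilon > 0$ and set $r_t^{+} := \lfloor \tfrac{1+\varepsilon}{l(Y)}\log_s t\rfloor$. Then $s^{-l(Y) r_t^{+}} \asymp t^{-(1+\varepsilon)}$ up to a bounded multiplicative factor coming from the floor, so $\sum_t s^{-l(Y) r_t^{+}} < \infty$. By the convergence half of Theorem~\ref{loglaw1}, for a.e.\ $\theta \in \partial X$ one has $d(\pi(C_t(x,\theta)), y_0) < r_t^{+}$ for all but finitely many $t$, whence $\limsup_t \frac{\log d(\pi(C_t(x,\theta)), y_0)}{\log t} \leq \frac{1+\varepsilon}{l(Y)}$. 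Intersecting these full-measure sets over a sequence $\varepsilon \downarrow 0$ (a countable intersection, hence still full measure) gives the upper bound $\limsup \leq 1/l(Y)$ for a.e.\ $\theta$. Symmetrically, with $r_t^{-} := \lfloor \tfrac{1-\varepsilon}{l(Y)}\log_s t\rfloor$ one has $\sum_t s^{-l(Y) r_t^{-}} \asymp \sum_t t^{-(1-\varepsilon)} = \infty$, so the divergence half of Theorem~\ref{loglaw1} yields $d(\pi(C_t(x,\theta)), y_0) \geq r_t^{-}$ infinitely often for a.e.\ $\theta$, giving $\limsup_t \frac{\log d(\pi(C_t(x,\theta)), y_0)}{\log t} \geq \frac{1-\varepsilon}{l(Y)}$; again intersect over $\varepsilon \downarrow 0$ to get $\limsup \geq 1/l(Y)$.

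There is essentially no obstacle here: the content is entirely in Theorem~\ref{loglaw1} (and behind it Theorem~\ref{km1.8} and Theorem~\ref{theorem-volumes}), and the remaining work is the elementary bookkeeping above. The only mild subtleties are (i) keeping track of the floor function, which only distorts $s^{-l(Y) r_t^{\pm}}$ by a factor bounded between $1$ and $s^{l(Y)}$ and hence does not affect convergence or divergence of the series, and (ii) noting that since $r_t$ ranges over integers one should define the sequences via $\lfloor\cdot\rfloor$ so that Theorem~\ref{loglaw1} applies verbatim, and that $l(Y) > 0$ (which follows from Theorem~\ref{theorem-volumes}, since $d(y_0,\cdot)$ being $\DL$ forces $\mu(A(r)) \asymp s^{-\kappa r}$ with $\kappa = l(Y) > 0$). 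Finally, one observes that the statement is uniform in $x \in X$ because Theorem~\ref{loglaw1} already holds for every $x$, and uniform in $y_0$ for the same reason.
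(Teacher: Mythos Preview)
Your argument is the standard derivation of a logarithm law from a zero--one Borel--Cantelli statement, and it is correct for the intended result. One caveat: with $r_t^{\pm}\approx\frac{1\pm\varepsilon}{l(Y)}\log_s t$ you obtain $d(\pi(C_t),y_0)\lessgtr \frac{1\pm\varepsilon}{l(Y)}\log_s t$, which yields $\limsup_t d(\pi(C_t),y_0)/\log_s t = 1/l(Y)$, not the $\log d$ version literally written in the corollary. Taken at face value, your lower bound step (``$d\geq r_t^{-}$ i.o.\ $\Rightarrow \limsup\log d/\log t\geq(1-\varepsilon)/l(Y)$'') does not follow, since $\log r_t^{-}/\log t\to 0$. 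The extra $\log$ in the numerator of the corollary appears to be a misprint: compare Theorem~\ref{K-S-theorem}, \cite{KleinMarg}~Theorem~1.7, and the paper's own proof below, where the inequality used is $d_Y(\omega(C_t),y_0)\geq\frac{1+\epsilon}{l}\log t$. With that reading, your proof is complete.

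Your route is, however, genuinely different from the paper's. You take Theorem~\ref{loglaw1} (already phrased on $\partial X$) as input and run the $r_t^{\pm}$ trick. The paper instead proves Theorem~\ref{chambers}, a Borel--Cantelli statement on the space $\Omega$ of admissible foldings of a \emph{fixed} apartment $\mathcal A$ into $Y$ (equivalently, on $T\backslash G/\Gamma$), and then derives the corollary by a measure-transfer argument: assuming a positive-measure set of $\theta\in\partial X$ violates the law, the $\Gamma$-invariance of the limsup function $\eta(\theta)$ forces this set to be essentially all of $B_\infty\backslash G$, and strong transitivity lets one pull back to the fixed apartment $\mathcal A$, producing a positive-$\nu$-measure set of $\omega\in\Omega$ contradicting Theorem~\ref{chambers}. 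Your approach is cleaner once Theorem~\ref{loglaw1} is granted, but note that the paper never proves Theorem~\ref{loglaw1} separately; the passage from the $\Omega$-statement to the $\partial X$-statement is exactly the substantive content of the paper's proof of the corollary, and your argument has pushed that work back into Theorem~\ref{loglaw1}.
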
\medskip

\subsection{A brief literature survey of logarithm laws}

The terminology `logarithm law' was coined (to our knowledge) by Sullivan~\cite{Sullivan}, who studied the phenomenon of excursions of geodesics in non-compact finite-volume hyperbolic manifolds. His results were extended to geodesic flows on symmetric spaces by Kleinbock-Margulis~\cite{KleinMarg} who also described how a similar philosophy could be used to derive results in classical diophantine approximation. 
Their paper, and the series of beautiful papers of F. Paulin~\cite{Paulin}, S. Hersonsky (\cite{HP1, HP2, HP3, HP4}) and of F. Paulin and J. Parkonnen (\cite{PP1, PP2, PP3, PP4}) describing a connection between geodesics and horoballs in trees and diophantine approximation in positive characteristic, provided much of the inspiration for this paper. Note that $G$ acts by isometries on its Bruhat-Tits building $X$, but is typically not the full automorphism group. The latter (denoted $\Aut(X)$) is a locally compact group and it makes sense to talk about lattices $\Gamma$ in $\Aut(X)$, which in particular may not arise ``algebraically", i.e. from $G$, and the geodesic flow on (unit tangent bundles of) $X/\Gamma$. In \cite{HP1}, logarithm laws were proved in this more general setup in the case that $X$ is a locally finite tree and that $\Gamma$ is a geometrically finite lattice. This category includes all algebraic examples, and so also the rank $1$ examples we consider in this paper. However, we are able to generalize logarithm laws to higher rank groups.

In order to prove Theorem~\ref{loglawp}, we will show to `code' the sequence of chambers a geodesic passes through by the orbit of a fixed chamber under a sequence of group elements. This will allow us to apply Theorem~\ref{km1.8} to a carefully chosen sequence of functions on $G/\Gamma$. This section draws heavily from~\cite{Mozes}, \S2. 

\medskip

\noindent\textbf{Remarks:} \begin{itemize}

\item One small (unimportant) difference between loc. cit. and this one is the side on which we have the group act on the building. Here, our group will act on the right, and consider the coset space $G/\Gamma$.

\item A more important difference: in loc. cit. the quotient complex $Y = X/\Gamma$ was a finite complex, since $\Gamma$ was a uniform lattice. Here, our quotients will be countable complexes. However this does not affect the approach.

\end{itemize}

\medskip

We recall that $X$ is the (geometric realization) of the Bruhat-Tits~\cite{BruTit} building of $G$, $Y = X/\Gamma$ the (finite-volume, non-compact) quotient, and $\pi: X \rightarrow Y$ the natural projection. $Y$ is a \emph{countable} complex, which inherits a labeling from the labeling on $X$. Following loc. cit. \S1, we say that a map between two labelled complexes is called \emph{admissible} if it is a label-preserving local homeomorphism.

Fix an apartment $\A$ and a chamber $C \in \A$. Let $B$ be the stabilizer of $C$ in $G$, $N$ the stabilizer of $\A$, and $T \subset N$ the subgroup that fixes $\A$ pointwise. Finally, let $A$ denote the subgroup of $N$ which acts via translations on $\A$. Let
$$\Omega : = \{\omega: \A \rightarrow Y|\omega \mbox{ is an admissible map }\}$$

\noindent One can think of $\Omega$ as the number of ways we can `fold' $\A$ into $Y$. We define $\phi: G/\Gamma \rightarrow \Omega$ via 
$$\phi(g\Gamma) = \pi \circ g|_{\A}$$

\noindent  This map is well-defined, and by Proposition 2.3 of loc. cit. (mutatis mutandis), is onto.
Note that the group $A$ acts on $\Omega$ by $a \omega = \omega \circ a|_{\A}$, and $\phi$ intertwines this action with the natural action of $A$ on $G/\Gamma$. Note that the $A$ action on $G/\Gamma$ factors through the action of $T$. Endowing $\Omega$ with the measure $\nu$ induced by Haar measure on $G/\Gamma$, we have, via Corollary 2.8 of loc. cit., an isomorphism between the dynamical systems $(T\backslash G/\Gamma, A)$ and $(\Omega, A)$.

The main step in the proof of Theorem~\ref{loglawp} is the following:

\begin{theorem}\label{chambers} Fix $y_0 \in Y$. Let $L \in \A$ be a geodesic ray (that is, a straight euclidean line segment). Let $\{C_t\}_{t \in \N}$ be the sequence of chambers it passes through. Let $\{r_t\}_{t \in \N}$ denote a sequence of real numbers. Then 

\begin{equation}
\nu(\omega: d_Y(\omega(C_t), y_0) \geq r_t \mbox { infinitely often }) = \left\{
\begin{array}{ll}  1 & \mbox{if}~ \sum_{t \in \N} s^{-l r_t} \mbox{ diverges }\\\\ 0 & \mbox{otherwise}.\end{array}\right.\end{equation}

\end{theorem}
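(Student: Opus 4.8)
The plan is to reduce Theorem~\ref{chambers} to an application of Theorem~\ref{km1.8} (the Borel--Cantelli statement for $\ED$ sequences applied to $\DL$ functions) via the coding $\phi : G/\Gamma \to \Omega$ and the isomorphism of dynamical systems $(T\backslash G/\Gamma, A) \cong (\Omega, A)$ recalled just above. Concretely: fix the geodesic ray $L\subset\A$; since $L$ is a straight line segment in the apartment $\A$ and $A$ acts on $\A$ by translations, the sequence of chambers $\{C_t\}$ it passes through is (up to finitely many initial terms and up to bounded perturbations coming from the fact that $L$ need not pass through the center of each chamber) of the form $C_t = a_t C$ for a sequence $a_t \in A$ marching off to infinity along a ray in $A$. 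Pulling back via $\phi$, the event $\{\omega : d_Y(\omega(C_t), y_0)\geq r_t\}$ becomes, up to the identification, the event $\{g\Gamma : d(a_t g\Gamma, x_0)\geq r_t\}$ for the point $x_0 = \pi\circ(\cdot)|_\A$ corresponding to $y_0$; in other words we are asking about $\Delta(f_t x)\geq s^{r_t}$ with $f_t = a_t$ and $\Delta = s^{d(x_0,\cdot)}$ (so that $\Phi_\Delta(r_t) \asymp \mu(A(r_t)) \asymp s^{-l r_t}$ by Theorem~\ref{theorem-volumes} and the definition of $l = l(Y)$).

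First I would make precise the coding step: verify that $d_Y(\omega(C_t),y_0)$ and $d(x_0, a_t g\Gamma)$ differ by a bounded amount uniformly in $t$ and in $g$ (the diameter of a chamber, plus the distance from $y_0$ to the vertex set of the chamber it lies in), so that the two Borel--Cantelli events have the same measure-theoretic behaviour after replacing $r_t$ by $r_t \pm O(1)$ — and this shift does not change whether $\sum_t s^{-l r_t}$ converges or diverges (as the perturbation only multiplies each term by a bounded factor). Here one must also handle that $\phi$ is onto but not injective and that the $A$-action factors through $T$; this is exactly what the stated isomorphism $(T\backslash G/\Gamma, A)\cong(\Omega,A)$ with the pushed-forward Haar measure $\nu$ takes care of, so the measure $\nu$ of the event in $\Omega$ equals the $\mu$-measure of the corresponding event in $G/\Gamma$.

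Second, I would check the two hypotheses of Theorem~\ref{km1.8}. The function $\Delta = s^{d(x_0,\cdot)}$ is $\DL$ by Theorem~\ref{theorem-volumes} (it is $U$-invariant for $U$ the stabilizer of the relevant chamber, hence smooth, and has the required tail decay $\Phi_\Delta(n)\asymp s^{-\kappa n}$). For the sequence $\scF = \{a_t\}$ I must verify the $\ED$ condition~\eqref{ED}: $\sup_m \sum_t \|a_t a_m^{-1}\|^{-\beta} < \infty$ for all $\beta>0$. Since the $a_t$ lie on a ray in the abelian group $A$ moving with uniformly positive speed in each coordinate that grows, $a_t a_m^{-1}$ for $t\geq m$ is a net of semigroup elements whose norm grows at least geometrically in $t-m$, so $\|a_t a_m^{-1}\|^{-\beta}$ is summable in $t$ with a bound uniform in $m$; the $t<m$ terms contribute finitely for similar reasons. (If the ray is degenerate in some directions one restricts to the subgroup it genuinely moves in, exactly as in the proof of Theorem~\ref{Skriganov}.) Granting this, Theorem~\ref{km1.8} gives that $\mathcal{B}(\Delta)$ is Borel--Cantelli for $\scF$, which is precisely the dichotomy asserted: $\mu(\{x : \Delta(a_t x)\geq s^{r_t}\ \text{i.o.}\})$ is $1$ if $\sum_t \Phi_\Delta(r_t) = \infty$ and $0$ otherwise, and $\Phi_\Delta(r_t)\asymp s^{-l r_t}$ translates this into the stated series $\sum_t s^{-l r_t}$.

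The main obstacle is the coding/comparison step: turning the combinatorial statement "the straight ray $L$ passes through the chamber $C_t$" into the clean group-theoretic statement "$C_t = a_t C$ with $a_t\in A$ an $\ED$ sequence," while controlling the $O(1)$ discrepancies between the building metric $d_Y$ evaluated on chambers versus on the base vertex and verifying that these discrepancies are genuinely uniform (independent of $g\Gamma$, i.e. of which folding $\omega$ we look at). This is where the geometry of~\cite{Mozes}, \S2 — in particular the fact that $\phi$ is onto and the $A$-orbit description of rays in $\A$ — does the real work; once that dictionary is set up, the rest is a direct citation of Theorems~\ref{km1.8} and~\ref{theorem-volumes}. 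A secondary point to be careful about is that $Y$ is only a \emph{countable} complex here rather than a finite one, but as noted in the remarks this does not affect the argument: all the relevant measures are still finite (normalized) and the coding is local.
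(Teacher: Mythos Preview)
Your proposal is correct and follows essentially the same approach as the paper: code the chambers $C_t$ by translates $a_t\in A$ of the base chamber, pull the event back to $G/\Gamma$ via the dynamical isomorphism with $\Omega$, and then apply Theorem~\ref{km1.8} using that $d(x_0,\cdot)$ is $\DL$ (Theorem~\ref{theorem-volumes}) and that $\{a_t\}$ is $\ED$ because $d(a_t,e)$ grows linearly. Your write-up is in fact more careful than the paper's terse proof --- you make explicit the bounded discrepancy between chamber-level and vertex-level distances and the role of the $T$-quotient in the coding --- but the skeleton is identical.
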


\medskip

\noindent We first indicate how to derive Theorem~\ref{loglawp} from Theorem \ref{chambers}:

\medskip

\noindent\textbf{Proof of Theorem~\ref{loglawp}:} Recall that $\partial X$ denotes the geodesic boundary of $X$, which can be naturally identified with the spherical building of $G$. The set of chambers of $\partial X$ is given by $B_{\infty}\backslash G$, where $B_{\infty}$ is the stabilizer of a chamber in $\partial X$. We endow $\partial X$ with a measure class by considering the product of Haar measure on $B_{\infty}\backslash G$ with Lebesgue measure on apartments. 

Given $x \in X, \theta \in \partial X$, let $L = L_{x, \theta} = \{r_s(x, \theta)\}_{s \geq 0}$ denote geodesic ray based at $x$ in the direction $\theta$. Let $\A^{\prime}$ be the apartment containing $L$, and $\{C_t(x, \theta)\}_{t \in \N}$ denote the sequence of chambers $L$ passes through. Let $\epsilon >0$, and suppose there were a positive measure set $\mathbf{B}_{\epsilon}$ of $\theta \in \partial X$ with 

\begin{equation}\label{loglawlower} \limsup_{t \rightarrow \infty} \frac{ \log d_Y (\pi(C_t(x, \theta)), y_0)}{ \log t} \geq \frac{1}{l} + \epsilon \end{equation}

\noindent Note that $\eta(\theta) =  \limsup_{t \rightarrow \infty} \frac{ \log d_Y (\pi(C_t(x, \theta)), y_0)}{ \log t}$ is clearly a $\Gamma$-invariant function of $\theta \in \partial X$. Thus, there is a positive $\Gamma$-invariant (Haar) measure set of chambers $E$ in $B_{\infty}\backslash G$ such that in each there is a positive Lebesgue measure set of geodesic rays satisfying (\ref{loglawlower}). We will use lowercase letters to denote chambers in $\partial X$. We write 
$$\mathbf{B}_{\epsilon} = \bigcup_{c \in E} \mathbf{B}_{c, \epsilon}$$

\noindent Note that the Lebesgue measure of the set $\mathbf{B}_{c, \epsilon}$ is a $\Gamma$-invariant function on $B_{\infty}\backslash G$, and so constant. Thus, we see that $E$ must contain almost every point in $B_{\infty}\backslash G$.  Using strong transitivity of the $G$-action (on $\partial X$), we pull these sets $\mathbf{B}_{c, \epsilon}$ back to the apartment $\mathfrak{a}_0$ sitting at the boundary of our fixed apartment $\A$. Let $\mathfrak{a}$ be the apartment containing $c$, and let $g$ be so that $\mathfrak{a}_0 g = \mathfrak{a}$. Then $\mathbf{B}_{\mathfrak{a}, \epsilon}g^{-1} \subset \mathfrak{a}_0$.

Fix a geodesic ray $L \subset \A$ in one of these directions, and let $\{C_t\}_{t \in \N}$ be the sequence of chambers it passes through.  Lifting the set $E$ to $G$ and projecting to $G/\Gamma$, and then to $\Omega$, we obtain a positive measure set of admissible maps $\omega$ for which 

$$d_Y(\omega(C_t), y_0) \geq (1+\epsilon) \frac{1}{l} \log t \mbox { infinitely often } $$

\noindent which contradicts Theorem~\ref{chambers}. A similar argument shows that the set of $\theta \in \partial_X$ with 

\begin{equation}\label{loglawupper} \limsup_{t \rightarrow \infty} \frac{ \log d_Y (\pi(C_t(x, \theta)), y_0)}{ \log t} \le \frac{1}{l} - \epsilon \end{equation}

\noindent has measure $0$ for all $\epsilon >0$. \qed\medskip

\noindent\textbf{Proof of Proposition~\ref{chambers}:} Given a geodesic ray $L$ and the associated sequence of chambers $\{C_t\}$, let $\{a_t\}$ denote a sequence of elements in $A$ so that $C_0 a_t = C_t$. We fix $y_0 \in Y$ and let $x_0 \in p^{-1}(y_0) \in G/\Gamma$. Now define 

$$h_t := \{x \in G/\Gamma~:~d(a_t x, x_0) > r_t\}$$
\noindent We apply Theorem \ref{km1.8} with the family $\mathcal{B} = \{h_t\}$, and the sequence of group elements $\{a_t\}$. By Theorem \ref{theorem-volumes} we know that $d(x_0,~)$ is $\DL$, moreover $\{a_t\}$ clearly satisfy the exponential divergence condition (\ref{ED}), since $d(a_t, e)$ (where $e$ is the identity element) grows linearly in $t$. 
\qed\medskip

\subsection{Flats} A similar argument as above also yields a logarithm law for \emph{flats} in the building. Given a $d$-dimensional flat ($1 \le d \le r$, where $r = \rank(\G)$), i.e. an isometrically embedded $d$-dimensional Euclidean space in $X$, we can consider the behavior of its  projection to $Y$. That, is, let $\mathfrak{d}: \R^d \rightarrow X$ denote the flat. Given $\bf{t} \in \R^d$, let $C_{\mathbf{t}, n}$ denote the chamber containing $\mathfrak{d}(n\mathbf{t})$. One considers the sequences of chambers obtained this way and applies a suitably modified version of Proposition~\ref{chambers}. For a $d$-dimensional flat, one will obtain a generic limit of $d/l$. This is similar to the argument used to prove Theorem 1.10 of~\cite{KleinMarg}.

\section{Concluding remarks}\label{section-concluding}

\subsection{Hausdorff measure and dimension}
In this section, we describe briefly how our theorems, coupled with the Mass Transference Principle of Beresnevich-Velani \cite{BerVel} can be used to obtain information about the Hausdorff measure and dimension of the sets we have been studying. A dimension function $f : \bbR_{+} \to \bbR_{+}$ is a continuous, nondecreasing function such that $\lim_{x \to 0}f(x) = 0$. Given such a function and a ball $B$ of radius $\theta$ in a metric space $(X, d)$, its $f-\volume$ is defined by
\begin{equation}\label{f-vol-def}\nonumber
V^{f}(B) = f(\theta).
\end{equation}

\noindent The Hausdorff $f$-measure $\cH^{f}(E)$ of a subset $E$ of $X$ with respect to the dimension function $f$ is defined as
\begin{equation}\label{def-H-measure}\nonumber
\cH^{f} (E) = \lim_{\rho \to 0} \cH^{f}_{\rho}(E) 
\end{equation}

\noindent where $\cH^{f}_{\rho}(E)$ is defined by

$$\cH^{f}_{\rho}(E) = \inf \sum_{i} V^{f} (B_i) $$

\noindent and $\{B_i\}$ forms a countable cover of $E$, each $B_i$ has radius at most $\rho$ and the infimum is taken over all such countable covers. A typical example of a dimension function is $f(\theta) = \theta^s$ for nonnegative $s$, in this case $\cH^f$ is called the $s$ dimensional Hausdorff measure and denoted by $\cH^{s}$. The Hausdorff dimension of a subset $E$ is defined by
$$ \dim E = \inf \{s~:~\cH^{s}(E) = 0 \} = \sup \{s~:~ \cH^{s}(E) = \infty\}.$$

\noindent  Beresnevich and Velani proved a Mass Transference Principle (\cite{BerVel}, Theorems 2 and 3) which allows us to derive Hausdorff measure bounds from  the ones for Haar measure. In short, many \emph{divergence} type Borel-Cantelli statements can be recast in terms of Hausdorff measure using their important technique. In particular, all such theorems in this paper are subject to this technology.  We only provide one example \footnote{The Hausdorff measure and dimension of \emph{exceptional sets} is of considerable interest both in dynamical as well as number theoretic contexts. A more complete treatment of this subject will be given elsewhere.  See also \cite{BDV} for more applications of \cite{BerVel} and related results.}: a direct application of Mass Transference coupled with Theorem \ref{KG-gen} gives us

\begin{theorem}\label{H-measure}
Let $f$ be a dimension function such that $x^{-n}f(x)$ is a decreasing function which tends to $\infty$ as $x$ tends to $0$. Assume that
\begin{equation}\label{H-sum-div}
\sum_{r = 1}^{\infty} f(\psi(r))\psi(r)^{-(n-1)} = \infty.
\end{equation}
\noindent Then
$$\cH^{f}(\{\Lambda \in \SL_{n}(k)/\SL_{n}(Z)~:~\Lambda~\text{is}~(\psi, n)~\text{approximable}\}) = \infty.$$ 
\end{theorem}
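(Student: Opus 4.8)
The plan is to derive Theorem~\ref{H-measure} from the Mass Transference Principle of Beresnevich and Velani \cite{BerVel}, fed by the divergence half of the Khintchine--Groshev theorem (Theorem~\ref{KG}, or its lattice form Theorem~\ref{KG-gen}). Via the correspondence $A\mapsto\Lambda_{A}$ of \S\ref{section-diophantine}, the set of $(\psi,n)$-approximable lattices appearing in the statement contains a bi-Lipschitz copy of $\mathcal{W}_{\mn}(\psi,k)\subseteq\Imn$; since a bi-Lipschitz map alters $\cH^{f}$ only by a bounded factor, it is enough to prove $\cH^{f}\big(\mathcal{W}_{\mn}(\psi,k)\big)=\infty$ inside $\Imn$. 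The space $\Imn$ with normalised Haar measure is Ahlfors regular, and in an ultrametric field every ball is a coset of a compact open subgroup, so the Mass Transference Principle of \cite{BerVel} -- in the form for limsup sets built from neighbourhoods of rational affine subspaces -- applies here just as over $\R$.

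First I would write $\mathcal{W}_{\mn}(\psi,k)$ as a limsup set: for each $\bq\in Z^{n}\setminus\{0\}$ the set $\{A\in\Imn:\|\bp+A\bq\|^{m}<\psi(\|\bq\|^{n})\ \text{for some}\ \bp\in Z^{m}\}$ is a union of neighbourhoods of rational affine subspaces of ``radius'' $\asymp\psi(\|\bq\|^{n})^{1/m}/\|\bq\|$, and $\mathcal{W}_{\mn}(\psi,k)$ is the limsup of these pieces as $\|\bq\|\to\infty$. The Mass Transference Principle reduces the desired $\cH^{f}$-statement to a $\lambda$-statement for the \emph{inflated} limsup set, in which each piece is enlarged according to the rule prescribed by $f$ and the codimension. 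That inflated set is again of the shape $\mathcal{W}_{\mn}(\widetilde\psi,k)$ for an explicit approximating function $\widetilde\psi$ built from $f$ and $\psi$, and by Theorem~\ref{KG} it has full $\lambda$-measure precisely when $\sum_{x\geq1}\widetilde\psi(x)=\infty$.

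Next I would carry out the bookkeeping identifying $\sum_{x}\widetilde\psi(x)=\infty$ with the hypothesis $\sum_{r}f(\psi(r))\psi(r)^{-(n-1)}=\infty$. Positive characteristic helps here: the number of $\bq\in Z^{n}$ with $\|\bq\|\leq s^{d}$ is exactly $s^{n(d+1)}$, so passing between sums over $\bq$, over degrees, and over integers $r$ is exact up to constants; the power $\psi(r)^{-(n-1)}$ emerges as the codimension-one Hausdorff-to-Lebesgue conversion factor in the slicing step of the Principle. With this in hand, the Mass Transference Principle gives $\cH^{f}\big(\mathcal{W}_{\mn}(\psi,k)\big)=\cH^{f}(\Imn)$, and $\cH^{f}(\Imn)=\infty$ because $x^{-n}f(x)\to\infty$ as $x\to0$ implies $f(x)/x^{N}\to\infty$ for $N=\dim\Imn$ (using $x\leq1$ and $N\geq n$), so that $\cH^{f}$ assigns infinite measure to the set $\Imn$, which has positive and finite $\cH^{N}$-measure.

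The main obstacle is choosing and invoking the correct version of the Mass Transference Principle and matching the exponents. For $m=1$ the pieces are genuine balls and the ball-to-ball principle of \cite{BerVel} suffices; for $m>1$ they are neighbourhoods of affine subspaces of codimension $m$, and one must use the ``linear forms'' (slicing) form of the principle, being careful which exponent governs the inflation -- this is precisely what produces the power $n-1$ (and, more generally, the correct power of $\psi(r)$) in the divergence hypothesis. A second, more routine, point is to check that the arguments of \cite{BerVel} go through over a non-Archimedean local field and that $A\mapsto\Lambda_{A}$ is bi-Lipschitz on compact charts, so that the matrix statement transfers to the claimed statement about lattices.
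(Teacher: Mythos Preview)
Your approach is essentially the same as the paper's: the paper gives no detailed proof of Theorem~\ref{H-measure} at all, stating only that ``a direct application of Mass Transference coupled with Theorem~\ref{KG-gen} gives us'' the result. You have correctly identified both ingredients and have fleshed out the skeleton the paper leaves implicit --- the limsup description of $\mathcal{W}_{\mn}(\psi,k)$, the passage between matrices and lattices via $A\mapsto\Lambda_A$, and the need for the linear-forms (slicing) variant of the Mass Transference Principle when $m>1$ --- so your proposal is in fact more complete than what the paper provides.
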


\end{document}